\documentclass{icmart}

\usepackage{enumitem}
\usepackage{hyperref}
\usepackage[initials]{amsrefs}
\usepackage[all]{xy}
\SelectTips{cm}{}

%%%%%%%%%%%%%%%%
%%%%%%%%%% Use this for your addresse(s)
%%%%%%%%%% they will appear at the end of your article
%%%%%%%%%% use one \contact[]{} for each author

\contact[uri.bader@gmail.com]{Uri Bader}
\contact[furman@uic.edu]{Alex Furman}

%%%%%%%%%%%

%%%%%%%%%%%%%%%%%%%%%%%%%%%%%%%%%%%%%%%%%%%%%%%%%%%%%%%%
% Commands and environments

\newcommand{\bbN}{{\mathbb N}}
\newcommand{\bbQ}{{\mathbb Q}}
\newcommand{\bbE}{{\mathbb E}}
\newcommand{\bbR}{{\mathbb R}}
\newcommand{\bbZ}{{\mathbb Z}}
\newcommand{\bbC}{{\mathbb C}}

\newcommand{\calF}{\mathcal{F}}
\newcommand{\calN}{\mathcal{N}}
\newcommand{\calM}{\mathcal{M}}

\newcommand{\calZ}{\mathcal{Z}}

% General math commands:

\newcommand{\pr}{\operatorname{pr}}

\newcommand{\supp}{\operatorname{supp}}

\newcommand{\Prob}{\operatorname{Prob}}

\newcommand{\Is}{\operatorname{Iso}}

\newcommand{\Homeo}{\operatorname{Homeo}}

\newcommand{\Commen}{\operatorname{Commen}}
\newcommand{\SL}{\operatorname{SL}}
\newcommand{\PSL}{\operatorname{PSL}}

\newcommand{\Aut}{\operatorname{Aut}}

\newcommand{\Map}{\operatorname{Map}}
\newcommand{\bnd}{\textbf{bnd}}

\newcommand{\overto}[1]{{\buildrel{#1}\over\longrightarrow}}

\newcommand{\acts}{\curvearrowright}

\newcommand{\gW}[2]{\operatorname{W}_{#1,#2}}
\newcommand{\cW}[1]{\operatorname{Weyl}_{#1}}

\newcommand{\wflip}{w_{\rm flip}}
\newcommand{\wlong}{\operatorname{w}_{\rm long}}

\newcommand{\wt}[1]{\widetilde{#1}}
\newcommand{\ol}[1]{\overline{#1}}
\newcommand{\ec}{/\!\!/}

%%%%%%%%%%%%%%%%%%%%%%%%%%%%%%%%%%%%%%%%%%%%%%%%%%%%%%%%

%\numberwithin{equation}{section}

\newtheorem{theorem}{Theorem}[section]
\newtheorem{corollary}[theorem]{Corollary}
\newtheorem{lemma}[theorem]{Lemma}

\newtheorem{proposition}[theorem]{Proposition}

 %%%% for unnumbered statements

\theoremstyle{definition}
\newtheorem{definition}[theorem]{Definition}
\newtheorem{example}[theorem]{Example}
\newtheorem{observation}[theorem]{Observation}
\newtheorem{remark}[theorem]{Remark}
\newtheorem{remarks}[theorem]{Remarks}

\title[Boundaries, rigidity of representations, and Lyapunov exponents]{Boundaries, rigidity of representations, and Lyapunov exponents}

\author[Uri Bader, Alex Furman
]{Uri Bader\thanks{U.B was supported in part by the ERC grant 306706.}, Alex Furman\thanks{A.F. was supported in part by the NSF grants DMS 1207803.}}

\begin{document}

\begin{abstract}
In this paper we discuss some connections between measurable dynamics and 
rigidity aspects of group representations and group actions.
A new ergodic feature of familiar group boundaries is introduced, and is used to obtain
rigidity results for group representations and to prove simplicity of Lyapunov exponents
for some dynamical systems.
\end{abstract}

\begin{classification}
Primary 37A; Secondary 22E.
\end{classification}

\begin{keywords}
Boundary theory, isometric ergodicity, characteristic maps, superrigidity, Lyapunov exponents.
\end{keywords}

\maketitle

% \begin{center}
% 	\today
% \end{center}

\section{Introduction}

%\thanks{U.B. and A.F. were supported in part by the BSF grant 2008267.}
%

Boundary theory is a broad term referring to constructions of auxiliary spaces that are used to analyze
asymptotic properties of spaces and groups, to study representations and group actions, and for other applications.
The topics discussed in this paper revolve around rigidity phenomena, inspired by Margulis' superrigidity,
and are then connected to the problem of simplicity of the Lyapunov exponents in classical dynamics.
Much of the work on which this paper is based is yet unpublished.
So rather than aiming at outmost generality,
we chose to illustrate the main ideas by presenting key results and to include sketches of their proofs.
Results about representations have natural cocycle versions;
and while we focus here on real Lie groups much of the work can be extended
to algebraic groups over more general fields.

\subsection*{Notations} % (fold)

The abbreviation for locally compact secondly countable group is \emph{lcsc}.
We shall use symbols $G$, $H$, $S$, and even $\Gamma$ to denote lcsc groups;
with $\Gamma$ being often discrete countable group, and $G$, $H$ mostly used for
real Lie groups, or (real points of) algebraic groups over $\bbR$.

By an action $\Gamma\acts X$ of a group $\Gamma$ on a set $X$ we mean a map
$\Gamma\times X\to X$, $(g,x)\mapsto g.x$,
so that $e.x=x$ and $gh.x=g.(h.x)$ for every $g,h\in \Gamma$, $x\in X$.
If $\Gamma$ is a lcsc group, a \emph{Borel $\Gamma$-space} $X$ is a standard Borel space $X$
with a $\Gamma $-action for which $\Gamma \times X\to X$ is a Borel map.
A \emph{Lebesgue $\Gamma$-space} is a Borel $\Gamma $-space $X$ with a Borel probability measure
$m$ on $X$ that is quasi-invariant under every $g\in \Gamma $, i.e. $g_*m\sim m$ for all $g\in \Gamma$.
A Lebesgue $\Gamma$-space $(X,m)$ is \emph{ergodic} if the only measurable $E\subset X$
with $m(g^{-1}E\triangle E)=0$ for every $g\in \Gamma$ satisfies $m(E)=0$ or $m(E)=1$.
The notion of a Lebesgue $\Gamma$-space depends only on the measure class $[m]$ of $m$;
a reference to $m$ will often be omitted from the notion for Lebesgue $G$-spaces.
If a Lebesgue $\Gamma$-space $X$ has a probability measure $m$ that is actually $\Gamma$-invariant,
i.e. $m(g^{-1}E)=m(E)$
for every measurable $E\subset X$ and $g\in\Gamma$,
we will say that the action $\Gamma \acts (X,m)$ is \emph{probability measure preserving} (p.m.p.).
If $X$ is a topological space (in particular a compact Hausdorff space) 
and the action map $\Gamma \times X\to X$ is continuous, we say
that $X$ is a \emph{topological $\Gamma$-space} (a \emph{compact $\Gamma$-space}).
A topological $\Gamma$-space $X$ is \emph{proper} if for every compact subset $Q\subset X$
the set $\{ g\in\Gamma \mid gQ\cap Q\ne \emptyset\}$ is precompact in $\Gamma$.
We shall use the notation $X^\Gamma$ for the set of $\Gamma$-fixed points in $X$.

Let $\Gamma $ be a lcsc group, $X$ a Lebesgue $\Gamma $-space, and $V$ a Borel $\Gamma $-space.
A measurable $\Gamma$-\emph{equivariant function} is a Borel function $f:X\to V$ such that for every $g\in \Gamma$, $f(g.x)=g.f(x)$
for a.e. $x\in X$.
We denote by $\Map_\Gamma (X,V)$
the space of equivalence classes of $\Gamma$-equivariant functions, where functions that agree $m$-a.e. are identified.
We shall use the term $\Gamma$-map to describe such a class $\phi\in \Map_\Gamma (X,V)$.
By a $\Gamma$-map $p:X\to Y$ between Lebesgue $\Gamma$-spaces we mean a $\Gamma$-map
for which $p[\mu] =[\nu]$, where $[\mu]$ denotes the $\Gamma$-invariant
measure class on $X$ and $[\nu]$ the one on $Y$.

\section{Boundaries} % (fold)

In this section we introduce a version of the concept of a boundary,
or rather boundary pair (Definition~\ref{D:boundary-pairs}),
and discuss the basic properties of this notion. A more detailed discussion will appear in \cite{BF-bnd}.
In our context a $\Gamma$-boundary is a Lebesgue $\Gamma$-space,
and as such it may have many realizations on topological $\Gamma$-spaces.
Furthermore, even as a Lebesgue space a $\Gamma$-boundary may not be unique.
%To define this notion we need to introduce some ergodicity properties.

\subsection*{Isometric ergodicity} % (fold)

A Lebesgue $\Gamma$-space $(X,m)$ is \emph{isometrically ergodic} if for any isometric action $\Gamma\to \Is(M,d)$ on a separable metric space $(M,d)$,
any $\Gamma$-map $f:X\to M$ is essentially constant; in which case its essential value is a $\Gamma$-fixed point.
In short,
\[
	\Map_\Gamma(X,M)=\Map(X,M^\Gamma).
\]
Isometric ergodicity implies ergodicity, by taking the two point space $M=\{0,1\}$ with the trivial $\Gamma$-action.
Isometric ergodicity is a natural strengthening of \emph{ergodicity with unitary coefficients},
introduced by Burger and Monod \cite{Burger+Monod}, where one considers only Hilbert spaces $M$ with unitary $\Gamma$-representations.
For p.m.p. actions $\Gamma\acts (X,m)$ both notions are equivalent to weak-mixing (cf. \cite{GW}).
However, here we shall be mostly interested in Lebesgue $\Gamma$-spaces
that have no invariant probability measure in the relevant measure class.

Next we introduce a \emph{relative} notion of isometric ergodicity, or equivalently, isometric ergodicity of
$\Gamma$-maps between Lebesgue $\Gamma$-spaces.
We first defined a relative notion of a metric space. Given a Borel map $q:\calM\to V$ between standard Borel spaces, a metric on $q$ is a Borel function
$d:\calM\times_V\calM\to[0,\infty]$ whose restriction $d_v$ to each fiber $M_v=q^{-1}(\{v\})$ is a separable metric.
A \emph{fiber-wise isometric $\Gamma$-action} on such $\calM$ consists of $q$-compatible actions $\Gamma\acts \calM$, $\Gamma\acts V$,
so that the maps between the fibers $g:M_v\to M_{g.v}$ are isometries, i.e.
\[
	d_{g.v}(g.x,g.y)=d_v(x,y)\qquad (x,y\in M_v,\ v\in V,\ g\in\Gamma).
\]
\begin{definition}
	A map $p:A\to B$ between Lebesgue $\Gamma$-spaces is \emph{relatively isometrically ergodic}
	if for every fiber-wise isometric $\Gamma$-action on $\calM\to V$ as above,
	and for any $q$-compatible $\Gamma$-maps $f:A\to \calM$, $f_0:B\to V$,
	there is a compatible $\Gamma$-map $f_1:B\to\calM$ making the following diagram commutative:
	\[
		\xymatrix{
		A \ar[d]_p \ar[r]^f & \calM \ar[d]^q\\
		B \ar[r]^{f_0} \ar@{.>}[ur]^{f_1} & V.
		}
	\]
\end{definition}
Note that isometric ergodicity relative to the trivial action on a point is just the (absolute) isometric ergodicity.
Let us list without proofs some basic properties of the notion of relatively isometrically ergodic maps.
Some of them are reminiscent of properties of relatively weakly mixing extensions in the context of p.m.p. actions.
In fact, for p.m.p. actions, or more generally relatively p.m.p. maps between Lebesgue $\Gamma$-spaces,
relative isometric ergodicity is equivalent to relative weak mixing.
However this remark will play no role in the sequel.
\begin{proposition}\label{P:riser-props}
	\begin{itemize}
		\item[{\rm (i)}] The property of relative isometric ergodicity is closed under composition of $\Gamma$-maps.
		% if $A\to B$ and $B\to C$
		% are relatively isometrically ergodic maps, then so is the composition $A\to C$.
		\item[{\rm (ii)}]
		If $A\to B\to C$ are $\Gamma$-maps, and $A\to C$ is relatively isometrically ergodic,
		then so is $B\to C$, but $A\to B$ need not be relatively isometrically ergodic.
		\item[{\rm (iii)}]
		For Lebesgue $\Gamma$-spaces $A$ and $B$, if the projection $A\times B\to B$ 
		is relatively isometrically ergodic then $A$ is (absolutely) isometrically ergodic. 
		This is  an "if and only if" in case $B$ is a p.m.p action, but not in general.
		\item[{\rm (iv)}]
		If $\Gamma$ is a lattice in a lcsc group $G$, and $p:A\to B$ is a relatively isometrically ergodic $G$-maps between $G$-spaces,
		then $p:A\to B$ remains relatively isometrically ergodic if viewed as a $\Gamma$-map between $\Gamma$-spaces.
	\end{itemize}
\end{proposition}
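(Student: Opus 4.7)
My plan is to handle the four parts by a mix of categorical diagram-chasing (i)--(ii) and two genuine constructions, one to turn an absolute isometric action into a trivially fibered one (iii), and one via induction from $\Gamma$ to $G$ (iv).

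For (i), given $A\overset{p}{\to}B\overset{q}{\to}C$ both relatively isometrically ergodic and data $(\calM\to V,\ f:A\to\calM,\ f_0:C\to V)$ for $q\circ p$, I would first pull $f_0$ back along $q$ to a map $B\to V$ and apply relative isometric ergodicity of $p$ to produce an intermediate lift $g_1:B\to\calM$, then feed $g_1$ and $f_0$ into relative isometric ergodicity of $q$ to obtain the desired $f_1:C\to\calM$. For the forward direction of (ii), the same input for $q:B\to C$ is promoted to an input for $q\circ p:A\to C$ by composing $f$ with $p$, and the unique $f_1:C\to\calM$ this produces is automatically what is required for $q$. For the negative assertion in (ii), I would build an example in which $B$ splits off a factor on which $\Gamma$ acts non-isometrically-ergodically while $C$ swallows that factor, so that the lift problem $A\to B$ obstructs the existence of $f_1$ even though $A\to C$ is fine.

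For (iii), assume $p:A\times B\to B$ is relatively isometrically ergodic, and let $\Gamma\acts(M,d)$ be a separable isometric action with a $\Gamma$-map $f:A\to M$. The trick is to view $M$ not as a space over a point but as the constant fibration $\calM=M\times B\to B=V$, with the diagonal $\Gamma$-action $g.(m,b)=(g.m,g.b)$; fiberwise this is just the isometric action on $M$, so the fiber-wise isometric condition holds. Set $\tilde f(a,b)=(f(a),b)$ and $f_0=\id_B$. Relative isometric ergodicity delivers $f_1:B\to\calM$ with $f_1(b)=(\phi(b),b)$ for some $\Gamma$-equivariant $\phi:B\to M$, and the commutativity of the diagram gives $f(a)=\phi(b)$ for a.e.\ $(a,b)$. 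By Fubini both sides are essentially constant, and the common value lies in $M^\Gamma$; hence $A$ is isometrically ergodic. The partial converse when $B$ is p.m.p.\ is a disintegration argument I would relegate to \cite{BF-bnd}, and I would merely indicate a counterexample in the non-p.m.p.\ case without elaboration.

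For (iv), the idea is induction. Given a fiber-wise isometric $\Gamma$-action on $\calM\to V$ and compatible $\Gamma$-maps $f:A\to\calM$, $f_0:B\to V$, I would form the induced $G$-spaces $\tilde\calM=G\times_\Gamma\calM$ and $\tilde V=G\times_\Gamma V$; the map $\tilde\calM\to\tilde V$ is a $G$-fibration whose fibers are the original fibers of $\calM\to V$, and the fiber-wise isometric structure for $G$ is inherited because elements of $G$ act on a given fiber only through their $\Gamma$-coset, reducing to the original $\Gamma$-isometric action. Under the standard adjunction $\Map_\Gamma(X,Y)\cong\Map_G(X,G\times_\Gamma Y)$ for a $G$-space $X$ and $\Gamma$-space $Y$, the maps $f,f_0$ become $G$-maps $\tilde f:A\to\tilde\calM$ and $\tilde f_0:B\to\tilde V$. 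Applying relative isometric ergodicity of $p:A\to B$ in the $G$-category yields a $G$-map $\tilde f_1:B\to\tilde\calM$ making the diagram commute, and the inverse adjunction converts $\tilde f_1$ into the required $\Gamma$-map $f_1:B\to\calM$. The main obstacle I anticipate is a careful verification that induction genuinely preserves the fiber-wise isometric structure and that the adjunction is compatible with the commutativity of the square; the metric on the induced bundle must be defined unambiguously on $\Gamma$-orbits, which is where the isometry condition on $\Gamma$ is exactly what is needed.
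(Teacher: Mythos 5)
The paper states this proposition without proof (it says explicitly ``Let us list without proofs some basic properties\dots''), so there is nothing to compare against; I will evaluate your argument on its own terms.

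Parts (i) and the positive direction of (ii) are correct diagram chases, and the cancellation in (ii) (that $f_1\circ q\circ p=f\circ p$ a.e.\ on $A$ forces $f_1\circ q=f$ a.e.\ on $B$) does hold because a $\Gamma$-map between Lebesgue $\Gamma$-spaces pushes the measure class onto the measure class of the target, so null sets pull back to null sets. Part (iii) is also correct; in fact you can streamline it by taking $V=\{\ast\}$ and $\calM=M$ directly, with $f\circ\pr_A:A\times B\to M$ and $f_0$ the constant map, and Fubini gives the essential constancy of $f$ and $f_1$ simultaneously. The counterexample claims in (ii) and (iii) are left unproved, but the paper provides no proofs either, so this is a reasonable level of detail.

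Part (iv) has a genuine gap. The adjunction you invoke, $\Map_\Gamma(X,Y)\cong\Map_G(X,G\times_\Gamma Y)$ for a $G$-space $X$ and $\Gamma$-space $Y$, is false. A $G$-map $X\to G\times_\Gamma Y$ composed with the canonical $G$-map $G\times_\Gamma Y\to G/\Gamma$ would give a $G$-map $X\to G/\Gamma$, which does not exist in general (take $G=\SL_2(\bbR)$, $\Gamma$ a cocompact lattice, and $X=G/P$: then $\Map_G(G/P,G\times_\Gamma Y)=\emptyset$ while $\Map_\Gamma(G/P,Y)$ can well be nonempty, e.g.\ $Y=G/P$). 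The correct Frobenius adjoint for $\Hom_\Gamma(\res X,\,\cdot\,)$ is the \emph{co}induction $\Map_\Gamma(G,Y)$, not the induction $G\times_\Gamma Y$. But then your nice observation that ``the fibers of $\tilde\calM\to\tilde V$ are the original fibers of $\calM\to V$'' fails: the fibers of $\Map_\Gamma(G,\calM)\to\Map_\Gamma(G,V)$ over a section $\psi$ are spaces of $\Gamma$-equivariant sections of $\calM$ over $\psi$, essentially a $\Gamma\backslash G$-indexed product of the original fibers, and one must endow these with a separable metric on which $G$ acts isometrically. This is exactly where the lattice hypothesis enters non-trivially: one can take, say, $D(\phi_1,\phi_2)=\int_{\Gamma\backslash G}\min\bigl(d_{\psi(g)}(\phi_1(g),\phi_2(g)),1\bigr)\,d\bar\mu$ using the finite $G$-invariant measure $\bar\mu$ on $\Gamma\backslash G$, and one must check separability and the compatibility of all maps. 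As written, your argument conflates the two adjoints: you use the fiber description valid for $G\times_\Gamma(\cdot)$ together with the universal property valid only for $\Map_\Gamma(G,\cdot)$. The overall strategy of inducing from $\Gamma$ to $G$ is the right one, but the proof of (iv) needs this repair before it is correct.
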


\subsection*{Boundary pairs} % (fold)

Recall the notion of an amenable action, or amenable Lebesgue $\Gamma$-space in the sense of Zimmer \cite{Zimmer-amen}.
We shall use the fact that if $B$ is an amenable Lebesgue $\Gamma$-space, then 
given a metrizable compact $\Gamma$-space $M$, the set $\Map_\Gamma(B, \Prob(M))$ is non-empty, i.e. there exist $\Gamma$-map
$\phi:B\to \Prob(M)$.
More generally, given an affine $\Gamma$-action on a convex weak-* compact set $Q\subset E^*$, 
where $E$ is a separable Banach space, there exists a $\Gamma$-map $\phi\in \Map_\Gamma(B, Q)$.

\begin{definition}\label{D:boundary-pairs}\hfill{}\\
Let $\Gamma$ be a lcsc group. A pair $(B_-, B_+)$ of Lebesgue $\Gamma$-spaces forms a \emph{boundary pair} if
the actions $\Gamma\acts B_-$ and $\Gamma\acts B_+$ are amenable, and the projections
\[
	B_-\times B_+\ \overto{}\  B_-,\qquad B_-\times B_+\ \overto{}\  B_+
\]
are relatively isometrically ergodic.
A Lebesgue $\Gamma$-space $B$ for which $(B,B)$ is a boundary pair will be called a $\Gamma$-\emph{boundary}.
\end{definition}

\begin{remarks}\label{R:boundaries}
	%A few immediate remarks.
	\begin{itemize}
		\item[(1)]
		If $(B_-,B_+)$ is a boundary pair for $\Gamma$, then $B_-\times B_+$ is isometrically ergodic.
		This follows by applying Propositions~\ref{P:riser-props}.(iii) and (i) to maps
		\[
			B_-\times B_+\ \overto{}\ B_-,\qquad
		 	B_-\times B_+\ \overto{}\  B_+\ \overto{}\  \{*\}.
	 	\]
		Therefore a $\Gamma$-boundary in the sense of Definition~\ref{D:boundary-pairs}, is also doubly ergodic with unitary coefficients,
		i.e. is a \emph{strong $\Gamma$-boundary} in the sense of Burger-Monod \cite{Burger+Monod}.
		\item[(2)]
		Every lcsc group $\Gamma$ admits boundary/ies in the above sense, see Theorem~\ref{T:FP} below.
		\item[(3)]
		Being a boundary is inherited by lattices: for any lcsc group $G$ any $G$-boundary $B$ is also a
		$\Gamma$-boundary for any lattice $\Gamma<G$.
		\item[(4)]
		Let $B_1$ be a $G_1$-boundary, and $B_2$ be a $G_2$-boundary for some lcsc groups $G_1, G_2$.
		Then $B=B_1\times B_2$ is a $G_1\times G_2$-boundary.
	\end{itemize}
\end{remarks}

% To illustrate the fact that existence of $G$-boundaries is non vacuous, let us deduce an isometric version of the powerful fixed point theorem of Ryll-Nardziewski.
% (The general theorem concerns semi-groups of non-contracting maps of weakly compact convex set in a locally convex space).
%
% \begin{theorem}[Ryll-Nardziewski \cite{Ryll-N}]\hfill{}\\
% 	Let $C$ be a weakly compact subset of a Banach space space, then $C$ contains a point fixed by all $g\in \Is(C)$.
% \end{theorem}
% \begin{proof}
% 	For any group of isometries $\Gamma<\Is(C)$, the set of $\Gamma$-fixed points $C^\Gamma$ is a closed subset of the compact space $C$.
% 	In view of compactness of $C$ it suffices to prove that $C^\Gamma$ is not empty for any given finitely generated subgroup $\Gamma<\Is(C)$.
% 	Replacing $C$ by the convex closure $M=\overline{{\rm co}}(\Gamma.c_0)$ of some point $c_0\in C$, we may assume $C$ is separable.
% 	Let $(B,\nu)$ be a $\Gamma$-boundary. Then by amenability there is a $\Gamma$-equivariant measurable map $\phi:B\to M$ where the $\Gamma$-action
% 	on $M$ is by isometries. Isometric ergodicity of $\Gamma\acts B$ implies that $\phi$ takes values in $M^\Gamma$.
% 	In particular, $C^\Gamma\supset M^\Gamma\ne \emptyset$.
% \end{proof}

\subsection*{Some Examples}
Most examples of boundaries that are used in rigidity theory turn out to have the properties stated in Definition~\ref{D:boundary-pairs}.
Let us outline the proofs in two basic cases.

\begin{theorem}\label{T:GmodP-bnd}\hfill{}\\
	Let $G$ be a connected semi-simple Lie group, $P<G$ be a minimal parabolic subgroup.
	Then $B=G/P$ with the Lebesgue measure class is a $G$-boundary, and is a $\Gamma$-boundary for any lattice $\Gamma<G$.
\end{theorem}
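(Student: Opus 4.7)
The plan is to verify the three conditions of Definition~\ref{D:boundary-pairs} for the pair $(B,B)$ with $B=G/P$, and then appeal to Remark~\ref{R:boundaries}(3) to transfer to any lattice $\Gamma<G$. Amenability of $G\acts G/P$ is immediate from Zimmer's criterion, since the Langlands decomposition $P=MAN$ (with $M$ compact, $A$ an $\bbR$-split torus, $N$ unipotent) exhibits $P$ as an amenable subgroup. The two projections $B\times B\to B$ are interchanged by swapping factors, so it suffices to treat just one of them.

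For relative isometric ergodicity of $p:G/P\times G/P\to G/P$, the geometric input is the Bruhat decomposition: the diagonal $G$-action on $G/P\times G/P$ has finitely many orbits parameterized by the Weyl group of $(G,A)$, with a single open (and conull) orbit $\calO$ whose generic stabilizer is the Levi subgroup $L=Z_G(A)=MA$. Thus $\calO\cong G/L$ as $G$-spaces, and the restriction of $p$ to $\calO$ is the canonical quotient $G/L\to G/P$ induced by $L<P$. Given a fiberwise isometric $G$-action $q:\calM\to V$ together with compatible $G$-maps $f:G/P\times G/P\to\calM$ and $f_0:G/P\to V$, the equivariance of $f$ on $\calO$ reduces the whole datum to a single point $x_0\in M_{v_0}$, where $v_0=f_0(eP)\in V^P$. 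The point $x_0$ is $L$-fixed by construction, and the stabilizer $P$ of $v_0$ acts isometrically on the fiber $M_{v_0}$. To produce the required lift $f_1:G/P\to\calM$ via $f_1(gP):=g\cdot x_0$, it suffices to upgrade $L$-invariance of $x_0$ to $P$-invariance.

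This is precisely where a Mautner-type argument enters. Pick $a\in A$ in the open positive Weyl chamber, so that $a^{-n}ua^n\to e$ in $N$ for every $u\in N$. Since $a\in L$ fixes $x_0$ and acts by isometry on $M_{v_0}$,
\[
d_{v_0}(ux_0,x_0)=d_{v_0}\bigl(a^n(a^{-n}ua^n)x_0,\,a^nx_0\bigr)=d_{v_0}\bigl((a^{-n}ua^n)x_0,\,x_0\bigr)\longrightarrow 0
\]
as $n\to\infty$, by continuity of the isometric $N$-action on the fiber. Hence $ux_0=x_0$ for every $u\in N$, and so $x_0$ is fixed by all of $P=LN$. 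The symmetric projection is handled identically by exchanging the two copies of $G/P$, so $(B,B)$ is a boundary pair; Remark~\ref{R:boundaries}(3) then supplies the lattice statement.

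The only delicate point is the passage from the abstract $G$-equivariant map $f$ on $\calO$ to a genuine single-fiber point $x_0$, and the corresponding upgrade of the Mautner identity at $x_0$ to an essential equality of $G$-maps. This is a routine measurable-selection and Fubini argument, which I would separate from the structural core above (Bruhat + Mautner) to keep the main line clean.
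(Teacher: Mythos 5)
Your argument is correct and follows essentially the same route as the paper: amenability of $G\acts G/P$ from amenability of $P$, identification of $G/P\times G/P$ with $G/A'$ via the open Bruhat cell (your $L=MA$ is the paper's $A'=\calZ_G(A)$), and a Mautner-phenomenon argument to promote an $A'$-fixed point in an isometric fiber action to a $P$-fixed point. The only cosmetic difference is that the paper states the Mautner lemma once for the $P$-space $P/A'$ and invokes it fiberwise, while you pick a fixed regular $a\in A$ and contract $N$ directly; the underlying reasoning is identical.
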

%
% \begin{example}\label{E:PSLdR}
% 	For $G=\SL_d(\bbR)$, take $A$ to be the subgroup of diagonal matrices, $P$ the subgroup of upper triangle
% 	matrices. Then $G/P$ is identified  (as a $G$-space) with the space of flags of vector subspaces
% 	\[
% 		\{ \{0\}<V_1<V_2<\dots< V_{d-1}<\bbR^d \mid V_i\in {\rm Gr}_{d,i}(\bbR) \}
% 	\]
% 	while $G/A$ is identified with the space of $d$-tuples of lines ($1$-dimensional subspaces) in general position
% 	\[
% 		\{ (\ell_1,\dots,\ell_d) \mid \bbR^d=\oplus_{i=1}^d \ell_i \}.
% 	\]
% 	Then ${\rm Weyl}_G\cong S_d$ acting by permutation on the $d$-tuples, the long element is the order reversal
% 	\[
% 		w_0:(\ell_1,\dots,\ell_d)\mapsto (\ell_d,\dots,\ell_1)
% 	\]
% 	and the projections $\pr_1,\pr_2:G/A\to G/P$ are
% 	\begin{eqnarray*}
% 		\pr_1(\ell_1,\dots,\ell_d) &=& (\ell_1,\ell_1\oplus \ell_2,\dots,\ell_1\oplus\cdots\oplus\ell_{d-1}),\\
% 		\pr_1(\ell_1,\dots,\ell_d) &=& (\ell_d,\ell_d\oplus \ell_{d-1},\dots,\ell_d\oplus\cdots\oplus\ell_{2}).
% 	\end{eqnarray*}
% 	Two flags $\{ V_1<\dots<V_{d-1}\}$, $\{ W_1<\dots<W_{d-1}\}$, are in general position iff the intersections
% 	$\ell_i=V_i\cap W_{d-i+1}$ are one-dimensional and the $d$-tuple $(\ell_1,\dots,\ell_d)$ spans $\bbR^d$.
% \end{example}

\begin{proof}
Since $P$ is amenable, $G\acts G/P$ is an amenable action (cf. \cite{Zimmer-amen}). 
So it remains to show that the projection $G/P\times G/P\ \overto{}\ G/P$
is relatively isometrically ergodic. 
Typical (here from the measurable point of view) pairs $g_1P$, $g_2P$ intersect along a coset of the centralizer $A'=\calZ_G(A)$ of a maximal split torus $A<P$.
So as a measurable $G$-space  $G/P\times G/P$ is the same as $G/A'$, and the projection corresponds to the map $gA'\mapsto gP$.

The following is a version of Mautner's Lemma.
\begin{lemma}\label{L:Mautner}
	The $P$-space $P/A'$ is isometrically ergodic.
\end{lemma}
\begin{proof}
	There is a natural correspondence between $P$-equivariant maps $P/A'\to M$
	from the transitive $P$-action on $P/A'$, and the $P$-orbits of $A'$-fixed points $x_0\in M$.
	Mautner's phenomenon in this context, is the statement that in an isometric action
	$P\to \Is(M,d)$ any $A'$-fixed point $x_0$ is fixed also by all elements $u\in P$ for which one can
	find a sequence $a_n\in A'$ with $a_n^{-1}ua_n\to e$.
	Indeed, using continuity of the homomorphism $P\to \Is(M,d)$, for an $A'$-fixed $x_0$ we have
	\[
		d(u.x_0,x_0)=d(ua_n.x_0,a_n.x_0)=d(a_n^{-1} u a_n^{-1}.x_0,x_0)\to d(x_0,x_0)=0.
	\]
	There $x_0$ is fixed by any such $u$.
	The Lemma is proven because the minimal parabolic $P<G$
	is generated by $A'$ and elements $u$ as above, so $x_0$ is $P$-fixed, and the corresponding map is constant.
\end{proof}

The relative isometric ergodicity for the transitive $G$-actions $\pi:G/A'\to G/P$ follows formally from the isometric ergodicity of
a.e. stabilizer ${\rm Stab}_G(gP)$ on its fiber $\pi^{-1}(\{gP\})$, but these are isomorphic to the action $P\acts P/A'$
which is isometrically ergodic by Mautner's Lemma~\ref{L:Mautner}.
This proves that $G/P$ is a $G$-boundary.
This property is inherited by any lattice $\Gamma<G$, so $\Gamma\acts G/P$ is also a boundary action.
\end{proof}

For products of groups $G=G_1\times \cdots\times G_n$ one can use the product of $G_i$-boundaries of the factors
$B=B_1\times \cdots\times B_n$ to obtain a $G$-boundary (Remark~\ref{R:boundaries}.(4)).
Thus this result can be extended to products of semi-simple groups over various fields.

Let us now show that any lcsc group $\Gamma$ has $\Gamma$-boundaries.
Specifically we shall show that the Furstenberg-Poisson boundary for a generating spread-out
random walk on $\Gamma$ forms a boundary pair in the sense of Definition~\ref{D:boundary-pairs}.
This strengthens the result of Kaimanovich \cite{Kaim-DE} showing ergodicity with
unitary coefficients for  $\Gamma\acts B\times \check{B}$ below.

\begin{theorem}[Furstenberg-Poisson boundaries]\label{T:FP}\hfill{}\\
	Let $\Gamma$ be a lcsc group and $\mu$ be a spread-out generating measure on $\Gamma$.
	Denote by $(B,\nu)$ and $(\check{B},\check{\nu})$ the Furstenberg-Poisson boundaries for $(\Gamma,\mu)$ and $(\Gamma,\check{\mu})$.
	Then $(\check{B},B)$ is a boundary pair for $\Gamma$ and for any of its lattices.
	Taking a symmetric spread-out generating $\mu$, the Furstenberg-Poisson boundary $B=\check{B}$ is a $\Gamma$-boundary.
\end{theorem}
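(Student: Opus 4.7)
The plan is to verify both parts of Definition~\ref{D:boundary-pairs}. Amenability of $\Gamma\acts B$ and $\Gamma\acts\check B$ is Zimmer's theorem on the Furstenberg--Poisson boundary of a spread-out generating measure, so only the relative isometric ergodicity of the two coordinate projections needs attention. By the symmetry between $\mu$ and $\check\mu$ these two statements are equivalent, so I would focus on $\pi:\check B\times B\to B$. Once this is settled, the symmetric case ($B=\check B$) follows by taking $\mu=\check\mu$, while the statement for lattices is a direct application of Proposition~\ref{P:riser-props}(iv).

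Given a fiber-wise isometric $\Gamma$-action $q:\calM\to V$ together with compatible $\Gamma$-maps $f_0:B\to V$ and $f:\check B\times B\to\calM$, I need to produce $f_1:B\to\calM$ with $q\circ f_1=f_0$ and $f=f_1\circ\pi$. I would realize $(\check B,B)$ jointly via Kaimanovich's bi-infinite walk: let $\Omega=\Gamma^\bbZ$ carry the stationary measure with i.i.d.\ $\mu$-increments, and let $\pi_+:\Omega\to B$ and $\pi_-:\Omega\to\check B$ be the forward- and backward-tail boundary maps. The Kaimanovich description identifies $\check B\times B$ measure-theoretically with the shift-ergodic components of $\Omega$, and pulling $f$ back gives a $\Gamma$-equivariant Borel map $F=f\circ(\pi_-,\pi_+):\Omega\to\calM$ over $f_0\circ\pi_+$.

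The task now reduces to showing that $F$ depends, modulo null sets, only on $\pi_+$. Conditional on $\pi_+=b$, the past of the walk is distributed as a $\check\mu$-Markov chain Doob-conditioned by the $b$-harmonic measure; its Furstenberg--Poisson boundary is still $\check B$, with a conditional measure class $\check\nu_b$. The section $\check b\mapsto f(\check b,b)$ of the separable metric fiber $M_{f_0(b)}$ arises as a harmonic section for the conditioned past, and a Mautner/ray-approximation argument along this conditioned walk forces it to be essentially constant. Its constant value defines $f_1(b)\in M_{f_0(b)}$, and $\Gamma$-equivariance is then formal from the equivariance of all ingredients.

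The main technical obstacle will be executing this last step in a measurable field of separable metric fibers, where Doob's scalar martingale theorem does not directly apply. I would handle it by passing to conditional distributions in the Polish spaces $\Prob(M_{f_0(b)})$, to which ordinary weak-$*$ martingale convergence does apply, and then use the spread-out assumption together with an isometric-ergodicity input for the conditioned chain on $\check B$ to collapse the limit conditional distribution to a Dirac mass.
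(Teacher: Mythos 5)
Your overall scaffolding matches the paper's: amenability is Zimmer's theorem, lattices are handled by Proposition~\ref{P:riser-props}(iv), one reduces to the single projection $\check B\times B\to B$, and one realizes both boundaries jointly on a path space and studies the conditional behavior of $f$ along the $\check B$-direction. But the core mechanism you propose for the collapse step does not hold up. The phrase ``a Mautner/ray-approximation argument along this conditioned walk'' is borrowed from a setting (the minimal parabolic acting on $P/A'$, Lemma~\ref{L:Mautner}) where Mautner's phenomenon applies because one has explicit group elements $a_n\in A'$ conjugating a fixed $u$ to the identity. There is no analogous group action on the fiber $\{\pi_+=b\}$ of a Doob-conditioned $\check\mu$-walk, so there is nothing to ``Mautner'' against. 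Worse, your fallback --- invoking ``an isometric-ergodicity input for the conditioned chain on $\check B$ to collapse the limit conditional distribution to a Dirac mass'' --- is essentially the conclusion you are trying to prove: collapsing the conditional law of $f(\cdot,b)$ to a Dirac mass for a.e.\ $b$ \emph{is} the relative isometric ergodicity of $\check B\times B\to B$ in disguise. As written, that step is circular.

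What the paper does at this point is genuinely different and is the content you are missing. It proves a quantitative contraction statement (Lemma~\ref{L:reSAT}): for any positive-measure $E\subset B\times\check B$ and any $\epsilon>0$ there exist $g\in\Gamma$ and a positive $\nu$-measure set $C\subset\pr_B(E)\cap\pr_B(g^{-1}E)$ so that $\check\nu(gE_x)>1-\epsilon$ for $x\in C$. The proof of this lemma is where the martingale convergence for $h_D(\check\pi_n(\omega))=\check\nu(\omega_n\cdots\omega_1 D)\to 1_D(\check\bnd(\omega))$ is used, but the decisive ingredient is Poincar\'e recurrence for the one-sided p.m.p.\ skew product $S:(\omega,x)\mapsto(\theta\omega,\omega_1.x)$ on $(\Omega\times B,\mu^{\bbN}\times\nu)$; recurrence produces the group element $g=\omega_n\cdots\omega_1$ that simultaneously returns $x$ to the good set and contracts $E_x$ in $\check B$. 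The relative isometric ergodicity is then deduced by a metric contradiction argument: if the pushforward $\beta_x$ of $\check\nu$ by $f(x,\cdot)$ into $(M_{f_0(x)},d_x)$ were not a.e.\ Dirac, one manufactures a positive-measure set on which balls have $\beta$-mass bounded away from $1$, and the contraction lemma, combined with equivariance ($w_{g.x}(g.y,\epsilon)=\check\nu(g\,\mathrm{Ball}_{d_x}(f(x,y),\epsilon))$), forces a contradiction. Your weak-$*$ martingale idea for $\Prob(M_{f_0(b)})$ is not unreasonable as a tool, but without the recurrence mechanism (or some substitute producing genuine $\Gamma$-contraction) there is no reason the limiting conditional measure should be Dirac; to repair the argument you would need to replace the ``Mautner'' step with something playing the role of Lemma~\ref{L:reSAT}.
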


\begin{proof}
	Amenability of the actions $\Gamma\acts B$, $\Gamma\acts\check{B}$ being well known (Zimmer \cite{Zimmer-amen}),
	so it remains to prove relative isometric ergodicity.
	It suffices to treat one of the projections, say
	\[
		\pr_B:B\times \check{B}\ \overto{}\ B, \qquad \pr_B(x,y)=x.
	\]
	We shall do so by establishing the following stronger property, whose proof uses a combination of Martingale Convergence Theorem
	for the $\check\mu$-random walk (see (\ref{e:MCT}) below),
	and Poincar\'e recurrence for a non-invertible p.m.p. skew-product (see (\ref{e:OmegaB})).
	\begin{lemma}\label{L:reSAT}
		Given a positive $\nu\times\check\nu$-measure subset $E\subset B\times\check{B}$ and $\epsilon>0$ there is $g\in \Gamma$
		and a positive $\nu$-measure subset $C\subset\pr_B(E)\cap  \pr_B(g^{-1}E)$ so that for $x\in C$
		\[
			\check\nu(g(E_x))>1-\epsilon
		\]
		where $E_x=\{y\in \check{B} \mid (x,y)\in E\}$.
	\end{lemma}
	\begin{proof}
		Denote by $(\Omega,\mu^\bbN)$ the infinite product space $(\Gamma,\mu)^\bbN$.
		The Furstenberg-Poisson boundary $(B,\nu)$ of $(\Gamma,\mu)$ can be viewed as a quotient $\bnd:(\Omega,\mu^\bbN)\to (B,\nu)$,
		where $\bnd(\omega)$ is the limit of the paths of the $\mu$-random walk
		\[
			\bnd(\omega)=\lim_{n\to\infty} \pi_n(\omega),\qquad \pi_n(\omega)=\omega_1\omega_2\cdots\omega_n
		\]
		with the convergence being understood as convergence of values of bounded $\mu$-harmonic functions
		(cf. \cite{Furst-Poisson}, \cite{KV}, \cite{Kaim-DE}).
		The $\check\mu$-boundary $(\check{B},\check{\nu})$ can also be viewed as a quotient of $(\Omega,\mu^\bbN)$, using
		\[	
			\check\bnd(\omega)=\lim_{n\to\infty}\check\pi_n(\omega),\qquad
			\check\pi_n(\omega)= \omega_1^{-1}\omega_2^{-1}\cdots\omega_n^{-1}.
		\]
		By the Furstenberg-Poisson formula, every measurable set $D\subset \check{B}$
		%, its indicator function $1_D\in L^\infty(\check{B},\check{\nu})$
		defines a bounded $\check\mu$-harmonic function $h_D:\Gamma\to [0,1]$ by
		\[
			h_D(g)=\int_{\check{B}} 1_D(y) \, dg_*\check\nu(y)=\check\nu(g^{-1}D).
		\]
		Furthermore, by the Martingale Convergence Theorem, for $\mu^\bbN$-a.e. $\omega\in\Omega$ we have
		\begin{equation}\label{e:MCT}
			h_D(\check\pi_n(\omega))\to 1_D(\check\bnd(\omega)).
		\end{equation}
		In particular, the set $\Omega_D=\left\{ \omega\in \Omega \mid h_D(\check\pi_n(\omega))=\nu(\omega_n\cdots \omega_1D)\ \to\ 1\right\}$
		satisfies $\mu^\bbN(\Omega_D)=\check\nu(D)$.
		Given $E\subset B\times\check{B}$ with $\nu\times\check\nu(E)>0$ and $\epsilon>0$, consider the measurable sets
		\begin{align*}
			E^* &=\{ (\omega,x)\in \Omega\times B \mid \omega\in \Omega_{E_x}\},\\
			E^*_N&=\{ (\omega,x)\in E^* \mid \forall n\ge N,\ \ \check\nu(\omega_n\cdots \omega_1E_x)>1-\epsilon \}.
		\end{align*}
		We have
		\[
			\mu^\bbN\times\nu(E^*)=\int_{B} \mu^\bbN(\Omega_{E_x})\,d\nu(x)=\int_{B}\check\nu(E_x)\,d\nu(x)=\nu\times\check\nu(E)>0.
		\]
		Since $E^*_N$ increase to $E^*$, we can find $N$ large enough so that $\nu\times\check\nu(E^*_{N})>0$.
		
		The fact that $\nu$ is $\mu$-stationary implies that the following skew-product transformation
		\begin{equation}\label{e:OmegaB}
			S:(\omega_1,\omega_2,\dots,x)\mapsto (\omega_2,\omega_3,\dots, \omega_1.x)\qquad \textrm{on}\qquad \Omega\times B
		\end{equation}
		preserves the probability measure $\mu^\bbN\times\nu$.
		Therefore, Poincar\'e recurrence implies that we can find
		\[
			n>N\qquad\textrm{so\ that}\qquad \mu^\bbN\times \nu(S^{-n}(E^*_N)\cap E^*_N)>0.
		\]
		Denote $F=S^{-n}(E^*_N)\cap E^*_N$ and let $F_\omega=\{ x\in B \mid (\omega,x)\in F\}$.
		By Fubini, there is a positive $\mu^\bbN$-measure set of $\omega$, for which $\nu(F_\omega)>0$.
		Fix such an $\omega$ and set
		\[
			g=\check\pi_n(\omega)^{-1}=\omega_n\cdots\omega_1,\qquad C=F_\omega.
		\]
		Then $C\subset \pr_B(E^*_N)\subset \pr_B(E)$ and for every $x\in C$ one has $\check\nu(gE_x)>1-\epsilon$.
	\end{proof}
	Let us now complete the proof of the Theorem by showing
	how the property described in the Lemma implies relative isometric ergodicity.
	First consider an arbitrary Borel
	probability measure $\beta$ on a metric space $(M,d)$, and for a small radius $\rho>0$ define
	$w(m,\rho)=\beta({\rm Ball}(m,\rho))$.
	We point out that for $\beta$-a.e. $m\in M$ one has $w(m,\rho)>0$ (this is easier to see for separable spaces).
	Note also that $\beta$ is a Dirac mass $\delta_m$ iff for every $\epsilon>0$
	there exists $m'\in M$ with $w(m',\epsilon)>1-\epsilon$.
	
	Now consider a fiber-wise isometric $\Gamma$-action on some $q:\calM\to V$ and pair of compatible $\Gamma$-maps
	$f:B\times \check{B}\to \calM$, $f_0:B\to V$.
	For $\nu$-a.e. $x\in B$, the pushforward of $\check\nu$ by $f(x,-)$ is a probability measure $\beta_x$ on the
	fiber $q^{-1}(\{f_0(x)\})$ that we shall denote $(M_{x}, d_x)$.
	To construct the required map $f_1:B\to \calM$ we will show that a.e. $\beta_x$ is a Dirac measure and define $f_1$
	by $\beta_x=\delta_{f_1(x)}$. Assuming this is not the case, there exists $\epsilon>0$ and a positive measure
	set $A\subset B$ so that the function
	\[
		w_x(y,\rho)=\beta_x\left({\rm Ball}_{d_x}(f(x,y),\rho)\right)%=\check\nu\{ y'\in \check{B} \mid d_{x}(f(x,y),f(x,y'))<\rho\}.
	\]
	satisfies $w_x(y,\epsilon)<1-\epsilon$ for all $(x,y)\in A\times\check{B}$.
	Since $\check\nu\times\nu$-a.e. $w_x(y,\epsilon)>0$,
	there exists a measurable map $A\to \check{B}$, $x\mapsto y_x$, with $w_x(y_x,\epsilon)>0$.
	Then the set
	\[
		E=\{ (x,z) \in A\times\check{B} \mid z\in {\rm Ball}_{d_x}(f(x,y_x),\epsilon)\}
	\]
	has positive measure, and by Lemma~\ref{L:reSAT}, there is $C\subset \pr_B(E)=A$ and $g\in \Gamma$ so that
	for $x\in C$ one has $g.x\in C\subset A$ and
	\[
		1-\epsilon<\check\nu(g {\rm Ball}_{d_x}(f(x,y_x),\epsilon))
		=\check\nu({\rm Ball}_{d_{g.x}}(f(g.x, g.y_x),\epsilon))=w_{g.x}(g.y_x,\epsilon).
	\]
	This contradiction completes the proof that $(B, \check{B})$ is a $\Gamma$-boundary pair.
\end{proof}

Let us add a purely geometric example.

\begin{example}\label{E:PS}
	Let $M$ be a compact Riemannian manifold of negative curvature, $\partial\tilde{M}$ the boundary
	of the universal cover $\tilde{M}$ of $M$, and let
	$\nu^{\rm PS}_o$ be the Patterson-Sullivan measure relative to some $o\in \tilde{M}$.
	Then $\partial\tilde{M}$ with the Patterson-Sullivan class is a $\Gamma$-boundary for the fundamental group $\Gamma=\pi_1(M)$.
%	More generally, one can consider a convex cocompact $\Gamma<\Is(X)$, a metric Patterson-Sullivan measure class on $\Partial $
\end{example}

The relative isometric ergodicity in this context can be shown using an analogue of Lemma~\ref{L:reSAT},
whose proof in this case would use Poincar\'e recurrence of the geodesic flow
on the unit tangent bundle $SM$ with Bowen-Margulis-Sullivan measure,
combined with Lebesgue differentiation instead of Martingale convergence used
in the preceding proof.
However, both Example~\ref{E:PS} and Theorem~\ref{T:FP}, can also be established in a different way using
Theorem~\ref{T:wind-boundaries} below.
The proof of the latter is inspired by Kaimanovich \cite{Kaim-DE}.

\section{Characteristic maps}

One of the applications of boundaries is a construction of characteristic maps (a.k.a. boundary maps)
associated to representations of the group, or to cocycles of ergodic p.m.p. actions of the group.
In particular, characteristic maps play a key role in higher rank superrigidity
(see \cite[Chapters V, VI]{Margulis-book}, \cite{Furst-bourbaki}, \cite{Zimmer-book}).
In this section we shall illustrate the use of relative isometric ergodicity by deducing special properties
of characteristic maps in three settings:
for convergence actions, actions on the circle, and linear representations over $\bbR$.
All the results have natural analogues in the context of measurable cocycles
over ergodic p.m.p. actions, but we shall not state these results here.

\subsection*{Convergence actions}

Let $G$ be a lcsc group and $M$ be a compact $G$-space.
For $n\geq 2$ we denote by $M^{(n)}$ the subset of $M^n$ consisting of distinct $n$-tuples, that is
\[
	M^{(n)}=\left\{(m_i)\in M^n \mid ~m_i\neq m_j\ \ \textrm{if}\ \ i\ne j\in \{1,\dots,n\}\right\}.
\]
The $G$-action $G\acts M$ is a \emph{convergence action},
if the diagonal $G$-action on $M^{(3)}$ is proper.
To avoid trivial examples we assume $G$ is not compact and ${\rm card}(M)>2$ (in which case ${\rm card}(M)=2^{\aleph_0}$).
Subgroups of $G$ that stabilize a point, or an unordered pair of points in $M$, are called \emph{elementary}.

Examples of convergence actions include (but not restricted to)
non-elementary groups of isometries of proper $\delta$-hyperbolic spaces acting on their Gromov boundary.
This includes Gromov-hyperbolic groups and their non-elementary subgroups, relatively hyperbolic groups and other examples.
In the case of relatively hyperbolic groups peripheral subgroups are elementary.

\begin{remark}\label{R:conv-min}
	Any convergence action $G\acts M$ has a unique minimal $G$-invariant closed subset $L(G)\subset M$.
	Given a closed subgroup $H<G$, both $H\acts M$ and $H\acts L(H)\subset M$ are convergence actions.
	So given a group $\Gamma$ and a homomorphism $\rho:\Gamma\to G$ with unbounded and non-elementary image
	in a group with a convergence action $G\acts M$,
	upon replacing $G$ by $\overline{\rho(\Gamma)}$ and $M$ by $L(\overline{\rho(\Gamma)})$,
	we may assume $\rho(\Gamma)$ to be dense in $G$ and $G\acts M$ to be a minimal convergence action.
	To avoid trivial situations, one assumes that $\rho(\Gamma)$ is non-elementary and not precompact in $G$.
\end{remark}

\begin{theorem}\label{T:char-conv}
	Let $\Gamma$ be an lcsc group, $(B_+,B_-)$ a boundary pair for $\Gamma$,
	$G\acts M$ a convergence action, and $\rho:\Gamma\to G$ a homomorphism with dense image.
	Assume that $\rho(\Gamma)$ is non-elementary and not precompact in $G$.
	Then there exist $\Gamma$-maps $\phi_+:B_+\to M$, $\phi_-:B_-\to M$ such that the image of
	\[
		\phi_{\bowtie}=\phi_+\times \phi_-:B_-\times B_+\to M^2
	\]
	is essentially contained in $M^{(2)}$ and
	\begin{itemize}
		\item[{\rm (i)}] $\Map_\Gamma(B_-,\Prob(M))=\{ \delta\circ \phi_-\}$,
		$\quad\Map_\Gamma(B_+,\Prob(M))=\{ \delta\circ \phi_+\}$.
		\item[{\rm (ii)}] $\Map_\Gamma(B_-\times B_+,M) = \{ \phi_-\circ \pr_-,\phi_+\circ \pr_+\}$,
		\item[{\rm (iii)}] $\Map_\Gamma(B_-\times B_+,M^{(2)}) = \{ \phi_{\bowtie},\ \tau\circ \phi_{\bowtie}\}$ where $\tau(m,m')=(m',m)$.
	\end{itemize}
\end{theorem}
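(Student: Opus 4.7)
The plan is to first produce $\phi_\pm$ out of measure-valued $\Gamma$-maps supplied by amenability of $B_\pm$, then verify the non-degeneracy statement that $\phi_{\bowtie}$ lands essentially in $M^{(2)}$, and finally derive the three uniqueness assertions by a common pairing argument that exploits ergodicity of $B_-\times B_+$ and the properness of $G\acts M^{(3)}$.

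Since $\Gamma\acts B_\pm$ are amenable, there exist $\Gamma$-maps $\psi_\pm\colon B_\pm\to\Prob(M)$, and I would upgrade each $\psi_\pm$ to be Dirac-valued, yielding $\phi_\pm\colon B_\pm\to M$. The idea is that $\Prob(M)$ decomposes $G$-equivariantly according to the cardinality of the support, and ergodicity of $B_\pm$ selects a single stratum. If the support is essentially of size $\ge 3$, then the normalized restriction of $\psi_\pm(b)^{\otimes 3}$ to $M^{(3)}$ defines a $\Gamma$-map $B_\pm\to\Prob(M^{(3)})$; properness of $G\acts M^{(3)}$ together with amenability of $B_\pm$ forces, by an amenable-reduction argument in the spirit of Zimmer, the essential image of $\rho$ to be precompact, contradicting our hypothesis. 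The $2$-point support case is ruled out similarly by non-elementarity of $\rho(\Gamma)$, after pairing with $\phi_-$ on the $B_-$ side to reach the proper stratum $M^{(3)}$.

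By ergodicity of $B_-\times B_+$, which follows from being a boundary pair via Remark~\ref{R:boundaries}(1), the image of $\phi_{\bowtie}$ is essentially contained either in the diagonal or in $M^{(2)}$. In the diagonal case, $\phi_-\circ\pr_-=\phi_+\circ\pr_+$ as $\Gamma$-maps $B_-\times B_+\to M$, but the left side factors through $\pr_-$ and the right side through $\pr_+$, so applying ergodicity of $B_\pm$ separately shows both are essentially constant; this common value is then a $\rho(\Gamma)$-fixed point in $M$, contradicting non-elementarity. Hence the essential image lies in $M^{(2)}$.

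For the uniqueness assertions, I would use a common pairing strategy. For (i), any $\psi\in\Map_\Gamma(B_+,\Prob(M))$ is Dirac-valued by the argument above, yielding some $\phi\colon B_+\to M$; I then pair $\phi$ with $\phi_\pm$ via the triple $(\phi_-\circ\pr_-,\phi_+\circ\pr_+,\phi\circ\pr_+)\colon B_-\times B_+\to M^3$ and invoke ergodicity to either identify $\phi=\phi_+$ or conclude that the image lies in $M^{(3)}$, the latter being excluded by the proper-action principle. For (ii), given $F\colon B_-\times B_+\to M$, I form the triple $(F,\phi_-\circ\pr_-,\phi_+\circ\pr_+)$ and argue analogously. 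For (iii), I apply (ii) to each coordinate projection of $F\colon B_-\times B_+\to M^{(2)}$; the only way both coordinates coincide with the already-identified maps while remaining distinct is $F=\phi_{\bowtie}$ or $F=\tau\circ\phi_{\bowtie}$. The main obstacle is the ``proper-action principle'' used throughout, namely that no $\Gamma$-map from $B_\pm$ or from $B_-\times B_+$ can essentially land in the proper $G$-space $M^{(3)}$; this amenable cocycle reduction, combined with non-precompactness of $\rho(\Gamma)$, is the nontrivial input driving every step.
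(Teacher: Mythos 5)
Your overall architecture matches the paper's: use amenability to get $\Prob(M)$-valued maps, show they are Dirac via a pairing into $M^3$ and the non-existence of $\Gamma$-maps into the proper $G$-space $M^{(3)}$, then use the same pairing trick for (ii) and (iii). However, there is a genuine gap in how you justify the central ``proper-action principle.''

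You attribute the non-existence of $\Gamma$-maps $B_-\times B_+\to \Sigma$ (for $\Sigma$ a proper $G$-space such as $M^{(3)}$ or $\Prob(M^{(3)})$) to ``amenable cocycle reduction in the spirit of Zimmer.'' Amenability goes in the wrong direction here: it is what \emph{produces} measure-valued $\Gamma$-maps, not what obstructs them. The paper's argument instead proceeds as follows: ergodicity of $B_-\times B_+$ forces any $\Gamma$-map into $\Sigma$ to range in a single $G$-orbit $G.s_0\cong G/K_{s_0}$ with $K_{s_0}$ compact; after enlarging $K_{s_0}$ to a compact $K$ so that $G/K$ carries a $G$-invariant metric, one applies \emph{isometric ergodicity} of $B_-\times B_+$ (Remark 2.3(1) gives this for a boundary pair) to conclude the map is constant, hence $\rho(\Gamma)$ fixes a point of $G/K$ and is precompact -- a contradiction. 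Mere ergodicity only gets you to a single orbit; isometric ergodicity is the decisive tool that forces constancy, and it is nowhere invoked in your proposal. The paper even remarks explicitly that the proof relies precisely on amenability of $B_\pm$ and isometric ergodicity of $B_-\times B_+$; leaving the latter out leaves the ``proper-action principle'' unproved.

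A secondary, smaller imprecision: in ruling out a two-atom $\Phi_+(y)$ by ``pairing with $\phi_-$ on the $B_-$ side to reach the proper stratum $M^{(3)}$,'' you need to exclude the degenerate case where the Dirac mass of $\Phi_-(x)$ coincides with one of the two atoms of $\Phi_+(y)$, in which case you never reach $M^{(3)}$ at all. The paper avoids this by feeding the symmetric triple $\Phi_-(x)\times\Phi_+(y)\times\tfrac{\Phi_-(x)+\Phi_+(y)}{2}$ into $\Prob(M^{3})$ and using ergodicity plus non-elementarity of $\rho(\Gamma)$ to rule out a common two-point invariant set. Your sketch gestures at non-elementarity being the input, which is correct, but the mechanism you describe does not quite deliver it without this extra step.
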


\begin{proof}[Sketch of the proof]
	Let $G\acts \Sigma$ be a proper action of $G$ on some locally compact separable space (e.g. $\Sigma=M^{(3)}$).
	We claim that 
	\[
		\Map_\Gamma(B_-\times B_+,\Sigma)=\emptyset.
	\]
	Indeed, properness implies that the quotient $\Sigma/G$ is Hausdorff and the stabilizers $K_s={\rm Stab}_G(s)$, $s\in \Sigma$,
	are compact subgroups.
	Ergodicity of $B_-\times B_+$ implies that any $\Gamma$-map $\Psi:B_-\times B_+\to \Sigma$ essentially ranges
	into a single $G$-orbit $G.s_0\cong G/K_{s_0}$.
	It is easy to see that there exists a compact subgroup $K<G$ such that  $K_{s_0}<K$ and $G/K$ carries a $G$-invariant metric
	(e.g, $K$ is the stabilizer of a $K_{s_0}$-invariant positive function in $L^2(G)$).
	We obtain a $\Gamma$-invariant map $B_-\times B_+\to G/K$, which is constant by isometric ergodicity,
	and we conclude that $\rho(\Gamma)$ is contained in a conjugate of $K$.
	Thus the existence of such $\Psi$ contradicts the assumption that $\rho(\Gamma)$ is not precompact.

	By amenability of $\Gamma\acts B_\pm$ we may choose $\Gamma$-maps $\Phi_\pm\in\Map_\Gamma(B_\pm,\Prob(M))$.
	Consider the function $\Psi\in \Map_\Gamma(B_-\times B_+, \Prob(M^3))$ defined by
	\[
		\Psi:(x,y)\mapsto \Phi_-(x)\times\Phi_+(y)\times \frac{\Phi_-(x)+\Phi_+(y)}{2}\in \Prob(M^3).
	\]
	Since $G\acts M^{(3)}$ is proper, the action $G\acts \Prob(M^{(3)})$ is also proper,
	and using the above argument with $\Sigma=\Prob(M^{(3)})$,
	it follows that $\Psi$ is supported on the big diagonal $\Delta_{12}\cup\Delta_{23}\cup\Delta_{31}$,
	where $\Delta_{ij}=\{(m_1,m_2,m_3)\in M^3 \mid m_i=m_j\}$.
	This implies that for a.e. $(x,y)$ the measure $\Psi(x,y)$ is atomic with at most two atoms,
	and consequently that $\Phi_-(x)$ and $\Phi_+(y)$ must be Dirac measures.
	We define $\phi_-$ by $\Phi_-(x)=\delta_{\phi_-(x)}$, and $\phi_+$ by $\Phi_+(y)=\delta_{\phi_+(y)}$.
	We also conclude that the essential image of any $\Gamma$-map $B_+\to \Prob(M)$ consists of $\delta$-measure.
	It follows that such a map is unique:
	indeed, given $\Phi'_-:B_-\to \Prob(M)$ we may also consider the map
	\[
		x\mapsto \frac{\Phi_-(x)+\Phi'_-(x)}{2}
	\]
	and conclude that a.e $\Phi'_-(x)=\Phi_-(x)$. A similar argument applies to give uniqueness of $y\mapsto \delta_{\phi+(y)}$
	as an element of $\Map_\Gamma(B_+,\Prob(M))$.
	This proves (i).

	Given $\psi\in \Map_\Gamma(B_-\times B_+,M)$, consider $\Psi=\psi\times (\phi\circ\pr_-)\times (\phi\circ\pr_+)$ as a $\Gamma$-map
	$B_-\times B_+\to M^3$. Since $\Gamma\acts M^{(3)}$ is a proper action, it follows that $\Psi$ takes values in $M^3\setminus M^{(3)}$,
	and more specifically in $\Delta_{12}$ or in $\Delta_{13}$ (because $\Delta_{23}$ is impossible), which gives (ii), while (iii)
	easily follows from (ii).
\end{proof}
Note that this proof used only the amenability of $B_-$, $B_+$ and isometric ergodicity of their product, $B_-\times B_+$,
but did not rely on the relative isometric ergodicity of the projections.
%In the next subsection we will encounter a more delicate situation, where the full power of our assumptions will be used.

\subsection*{Actions on the circle}

Consider an action of some group $\Gamma$ on a circle $S^1$ by homeomorphisms.
Up to passing to an index two subgroup, we may assume the action to be orientation preserving,
and obtain a homomorphism $\Gamma\to \Homeo_+(S^1)$.
Hereafter we shall assume that the action has no finite orbits.
It is well known that in such case $\Gamma$ has a unique minimal set $K\subset S^1$, and either $K=S^1$, or $K$ is a Cantor set.
In the latter case, $S^1\setminus K$ is a countable dense set of open arcs;
collapsing these arcs one obtains a degree one map $h:S^1\to S^1$ that intertwines
the given $\Gamma$-action with a minimal $\Gamma$-action on the circle.

Given a minimal $\Gamma$-action on the circle the following dichotomy holds
(see Margulis \cite{Margulis-circle}, Ghys \cite{Ghys-circle}):
either $\rho(\Gamma)$ is equicontinuous, in which case it is conjugate into rotation group ${\rm SO}(2)$,
or the centralizer $Z$ of $\rho(\Gamma)$ in $\Homeo_+(S^1)$ is a finite cyclic group,
and the $|Z|$-to-$1$ cover $g:S^1\to S^1/Z$ intertwines the given minimal $\Gamma$-action with a
\emph{minimal and strongly proximal}\footnote{In general, a $\Gamma$-action $\Gamma\acts M$ on a compact metrizable $M$ is minimal and strongly proximal if
for every $\nu\in\Prob(M)$ the closure $\overline{\Gamma.\nu}\subset\Prob(M)$ contains $\delta_M=\{\delta_x \mid x\in M\}$.} one,
where the latter term can be taken to mean that for any proper closed arc $J\subsetneq S^1$ and any non-empty open arc
$U\ne\emptyset$ there is $g\in\Gamma$ with $gJ\subset U$.
To sum up, any group action with only infinite orbits is semi-conjugate either to rotations, or to a minimal and strongly proximal action.
We shall focus on the latter class of actions.
\begin{theorem}\label{T:char-circle}
	Let $\Gamma\to\Homeo_+(S^1)$ be a minimal and strongly proximal action on the circle,
	and let $(B_-,B_+)$ be a boundary pair for $\Gamma$.
	Then there exist $\Gamma$-maps $\phi_+:B_+\to S^1$, $\phi_-:B_-\to S^1$ such that the image of
	\[
		\phi_{\bowtie}=\phi_-\times \phi_+:B_-\times B_+\to (S^1)^2
	\]
	is essentially contained in  the space of distinct pairs $(S^1)^{(2)}$, and
	\begin{itemize}
		\item[{\rm (i)}] $\Map_\Gamma(B_-,\Prob(S^1))=\{ \delta\circ \phi_-\}$,
		$\quad\Map_\Gamma(B_+,\Prob(S^1))=\{ \delta\circ \phi_+\}$.
		\item[{\rm (ii)}] $\Map_\Gamma(B_-\times B_+,S^1)$ has a canonical cyclic order.
	\end{itemize}
\end{theorem}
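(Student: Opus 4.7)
The plan is to adapt the architecture of the proof of Theorem~\ref{T:char-conv} to the minimal strongly proximal setting, with strong proximality playing the role previously played by properness of the $G$-action on $M^{(3)}$. The argument splits into four stages.

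Stage 1 (construction and Dirac-valuedness). By amenability of $B_\pm$, I choose $\Phi_\pm\in\Map_\Gamma(B_\pm,\Prob(S^1))$. The crux is to show that $\Phi_\pm$ are essentially Dirac-valued, and hence factor as $\Phi_\pm=\delta\circ\phi_\pm$ for $\Gamma$-maps $\phi_\pm\colon B_\pm\to S^1$. Here I would invoke strong proximality in the form $\overline{\Gamma.\nu}\supset\delta_{S^1}$ for every $\nu\in\Prob(S^1)$, combined with a recurrence argument on $B_\pm$ in the spirit of Lemma~\ref{L:reSAT}: a positive-measure set of non-Dirac fibers of $\Phi_+$ could be translated by a suitable $g\in\Gamma$ to concentrate arbitrarily near a Dirac mass, producing a contradiction with the $\Gamma$-invariance (hence ergodic constancy) of the maximal atomic mass of $\Phi_+$. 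Uniqueness in (i) then follows formally: a second $\Gamma$-map $\phi_-'$ would render the midpoint $\Psi(x)=\frac12(\delta_{\phi_-(x)}+\delta_{\phi_-'(x)})$ a non-Dirac element of $\Map_\Gamma(B_-,\Prob(S^1))$, contradicting Dirac-valuedness unless $\phi_-=\phi_-'$ a.e.

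Stage 2 (essential image in $(S^1)^{(2)}$). The coincidence set $E=\{(x,y)\in B_-\times B_+ \mid \phi_-(x)=\phi_+(y)\}$ is $\Gamma$-invariant and, by isometric ergodicity of $B_-\times B_+$ (Remark~\ref{R:boundaries}.(1)), either null or conull. If conull, Fubini forces both $\phi_\pm$ to be essentially constant with a common value $c\in S^1$, necessarily $\Gamma$-fixed; but a minimal strongly proximal action admits no $\Gamma$-fixed point, since a $\Gamma$-fixed $\delta_c$ would violate $\overline{\Gamma.\delta_c}\supset\delta_{S^1}$.

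Stage 3 (cyclic order, (ii)). Ergodicity of $B_-\times B_+$ implies that any two distinct classes $\psi_1\ne\psi_2\in\Map_\Gamma(B_-\times B_+,S^1)$ disagree almost everywhere: their coincidence set is $\Gamma$-invariant and, by distinctness, cannot be conull. Given three pairwise distinct classes, the induced map $(\psi_1,\psi_2,\psi_3)\colon B_-\times B_+\to(S^1)^{(3)}$ essentially ranges in one of the two orientation components of $(S^1)^{(3)}$; both are $\Gamma$-invariant because $\Gamma\le\Homeo_+(S^1)$, so ergodicity pins down an a.e.\ constant orientation, and the pointwise consistency of cyclic order on $S^1$ transfers to a canonical cyclic order on $\Map_\Gamma(B_-\times B_+,S^1)$. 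The principal obstacle is Stage 1: this is the only place where strong proximality enters essentially, and engineering the recurrence-plus-strong-proximality argument requires care, as it parallels but does not follow directly from Lemma~\ref{L:reSAT}.
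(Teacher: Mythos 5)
Your Stage~1 is where the proposal breaks down, and it diverges significantly from the paper's argument. The paper's proof of Theorem~\ref{T:char-circle} does \emph{not} rely on a recurrence argument in the spirit of Lemma~\ref{L:reSAT}: that lemma is specific to the Furstenberg--Poisson boundary and is used in the paper only to verify that the FP boundary \emph{is} a boundary pair (Theorem~\ref{T:FP}). Here, $(B_-,B_+)$ is an arbitrary boundary pair, and the only structure available is amenability of $B_\pm$, isometric ergodicity of $B_-\times B_+$, and \emph{relative} isometric ergodicity of the projections $B_-\times B_+\to B_\pm$. Your sketch never invokes relative isometric ergodicity, yet the paper's proof hinges on it at exactly the step you are trying to finesse.

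Concretely, the paper first rules out a continuous part of $\Phi_\pm$ by exhibiting the Ghys $\Homeo_+(S^1)$-invariant metric
\[
  d(\mu_1,\mu_2)=\max\left\{\,|\mu_1(J)-\mu_2(J)| : J\subset S^1\ \text{an arc}\,\right\}
\]
on the space $\Prob_c(S^1)$ of atomless measures; since strong proximality gives $\Prob_c(S^1)^\Gamma=\emptyset$, isometric ergodicity forbids any $\Gamma$-map into $\Prob_c(S^1)$, so $\Phi_\pm$ is atomic. It then fixes $w>0$, considers the finite sets $A_\pm$ of atoms of weight $\ge w$ (of a.e.\ constant cardinality $k$), shows $A_-(x)$ and $A_+(y)$ are essentially unlinked using minimality and strong proximality, and finally applies relative isometric ergodicity of $B_-\times B_+\to B_-$ to show that the index of the complementary arc of $A_-(x)$ containing $A_+(y)$ does not depend on $y$ --- which forces $k=1$ by minimality. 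Your substitute --- ``translate a set of non-Dirac fibers to concentrate near a Dirac mass and contradict ergodic constancy of the maximal atomic mass'' --- does not work as stated: the maximal atomic mass of $\Phi_+(x)$ is indeed an ergodically constant quantity, say $c$, but translating by $g\in\Gamma$ gives $g_*\Phi_+(x)=\Phi_+(g.x)$, whose maximal atomic mass is still exactly $c$; there is no mechanism, absent a path-space recurrence structure such as the one available only in the FP setting, to push this mass toward $1$. Strong proximality guarantees that $\overline{\Gamma.\nu}$ contains Dirac masses, but this is a topological fact about a single orbit and does not by itself produce the measurable contradiction you need. Stages~2 and~3 are fine and essentially match the paper, but they rest on Stage~1, which as written has a genuine gap precisely where the paper deploys the invariant metric on $\Prob_c(S^1)$ and relative isometric ergodicity.
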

\begin{proof}[Sketch of the proof]
	Following Ghys \cite{Ghys-circle} we note that
	\[
		d(\mu_1,\mu_2)=\max\left\{|\mu_1(J) - \mu_2(J)|\,:\, J\subset S^1\ \textrm{is\ an\ arc}\right\},
	\]
	is a $\Homeo_+(S^1)$-invariant metric on the subspace $\Prob_c(S^1)$ of all continuous (i.e. atomless) probability measures on $S^1$.
	This shows that there are no $\Gamma$-maps from any isometrically ergodic $\Gamma$-space $A$ to $\Prob_c(S^1)$,
	because $\Prob_c(S^1)^\Gamma=\emptyset$ under the assumption of
	minimality and strongly proximality of $\rho(\Gamma)$.
	
	By amenability there exist $\Phi_\pm\in \Map_\Gamma(B_\pm,\Prob(S^1))$, and the above argument shows that they take values in
	atomic measures. We claim that $\Phi_\pm=\delta_{\phi_\pm}$ for some unique $\phi_\pm\in \Map_\Gamma(B_\pm,S^1)$.
	Indeed, fix $w>0$ and let $A_-(x)=\{ a\in S^1 \mid \Phi_-(x)(\{a\})>w\}$ denote the set of atoms of $\Phi_-(x)$ of weight $\ge w$,
	and define $A_+(y)$ similarly.
	Then, for $w>0$ small enough, $x\mapsto A_-(x)$ is a $\Gamma$-equivariant assignment of non-empty finite subsets of $S^1$;
	and by ergodicity the cardinality of $A_-(x)$ is a.e. constant $k\in\bbN$.
	Similarly for $A_+(y)$.
	
	Let us say that $(x,y)$ is a \emph{good pair} if  $A_-(x)$ is \emph{unlinked} with $A_+(y)$, i.e. they belong to disjoint arcs.
	Since the set of good pairs is $\Gamma$-invariant, it is either null or conull in $B\times\check{B}$ by ergodicity.
	Choose proper closed arcs $I,J\subsetneq S^1$ so that $E=\{ x\in B \mid A_-(x)\subset J\}$ has $\nu(E)>0$,
	and $F=\{ y \mid A_+(y)\subset I\}$ has $\check\nu(F)>0$.
	By minimality and strong proximality, there exists $g\in\Gamma$ with $gJ\cap I=\emptyset$.
	Then $gE\times F$ is a positive measure set of good pairs. Hence for a.e. $(x,y)$ the sets $A_-(x)$ and $A_+(y)$ are unlinked.
	
	For a.e. fixed $x\in B$, the complement $S^1\setminus A_-(x)=\bigsqcup_{i=1}^k U_i(x)$ is a disjoint union of $k$ open arcs $U_i(x)$,
	where the enumeration is cyclic and $x\mapsto U_1(x)$ can be assumed to be measurable.
	The $\Gamma$-action cyclically permutes these intervals:
	$\rho(g) U_i(x)=U_{\pi_x(i)}(g.x)$ by some $\pi_x\in {\rm Sym}_k$.
	The fact that $A_-(x)$ is unlinked from a.e. $A_+(y)$ means that $A_+(y)\in U_{i(x,y)}(x)$
	for some measurable $i:B_-\times B_+\to \{1,\dots,k\}$,
	while relative isometric ergodicity of $B_-\times B_+\to B_-$ implies that $i(x,y)=i(x)$ is essentially independent of
	$y\in B_+$.
	Thus for a.e. $x\in B_-$, the closure $J(x)=\overline{U_{i(x)}(x)}$ contains a.e. $A_+(y)$.
	By minimality of the $\Gamma$-action on $S^1$, it follows that $J(x)=S^1$ and $k=1$.
	As $w>0$ was arbitrary, it follows that $\Phi_-(x)=\delta_{\phi_-(x)}$
	for some $\phi_-\in\Map_\Gamma(B_-,S^1)$.
	Similarly we get $\Phi_+=\delta\circ \phi_+$ for a unique $\phi_+\in\Map_\Gamma(B_+,S^1)$.
	
	For a.e. $(x,y)\in B_-\times B_+$ the given orientation of $S^1$ defines a cyclic order on
	every triple in $\Map_\Gamma(B_-\times B_+,S^1)$ by evaluation. This order is $\Gamma$-invariant,
	and is therefore a.e. constant by ergodicity. This shows (ii).
\end{proof}

It is possible that $\Map_\Gamma(B_-\times B_+,S^1)=\{\phi_-\circ\pr_-, \phi_+\circ\pr_+\}$; but
short of proving this we will rely on the cyclic order (ii) that would suffice for our arguments.
We note also that the concept of relative isometric ergodicity allows to improve the argument
from \cite{BFS-circle} that was based only on double ergodicity.
See also \cite[\S 2]{Burger-4Zimmer} for a different argument that gives the above result 
in the special case of $B$ being a Furstenberg-Poisson boundary.

\subsection*{Linear representations}

Let $G$ be a connected, center-free, simple, non-compact, real Lie group, $P<G$ a minimal parabolic subgroup,
$A<P$ a maximal split torus, and $A'=\calZ_G(A)$ its centralizer.
Since $A'<P$ one has a natural $G$-equivariant projection $\pr_1:G/A'\to G/P$.
The Weyl group of $G$ can be defined as $\calN_G(A)/\calZ_G(A)=\calN_G(A')/A'=\Aut_G(G/A')$; and can also be used
to parameterize $\Map_G(G/A',G/P)$.
If $\wlong\in \cW{G}$ denotes the \emph{long element} of this Coxeter group,
then $\pr_2=\pr_1\circ \wlong:G/A'\to G/P$ is the opposite projection, so that
\[
	\pr_1\times \pr_2:G/A'\to G/P\times G/P
\]
is an embedding, whose image is the \emph{big} $G$-orbit.
For $G=\PSL_d(\bbR)$, $A<P$ are the diagonal and the upper triangular subgroups,
$G/A'$ is the space of $d$-tuples $(\ell_1,\dots,\ell_d)$
of $1$-dimensional subspaces that span $\bbR^d$,
$\cW{G}\cong {\rm Sym}_d$ acts by permutations,
$\wlong$ is the order reversing involution $j\mapsto (n+1-j)$,
$G/P$ is the space of flags $(E_1,\dots,E_d)$ consisting of nested vector subspaces $E_j<E_{j+1}$ with $\dim(E_j)=j$,
and
\[
	\begin{split}
	&\pr_1:(\ell_1,\dots,\ell_d)\mapsto (\ell_1,\ell_1\oplus \ell_2, \dots,\ell_1\oplus\cdots\oplus \ell_d=\bbR^d),\\
	&\pr_2:(\ell_1,\dots,\ell_d)\mapsto (\ell_d,\ell_{d-1}\oplus \ell_d, \dots,\ell_1\oplus\cdots\oplus \ell_d=\bbR^d).
	\end{split}
\]
The image of $\pr_1\times \pr_2$ consists of pairs of flags that are in a general position.

If ${\rm rank}_\bbR(G)=1$, then one can identify $G/A'$ with the space of oriented but unparameterized geodesic lines
in the symmetric space $X$ of $G$,
$G/P$ with sphere at infinity $\partial_\infty X$, $G/A'\to G/P$ associating the limit at $-\infty$ of the geodesic, and
$\cW{G}\cong \bbZ/2\bbZ$ acting by flipping the orientation/ endpoints of the geodesics.
The image of $G/A'$ in $G/P\times G/P$ consists of all distinct pairs.
In this case $G\acts G/P$ is a convergence action, and in the collection of subgroups of $G$ which act minmally on $G/P$, the Zariski dense subgroups are precisely the ones that are not precompact and non-elementary.
Hence the following result in the special case of ${\rm rank}_\bbR(G)=1$ can also be deduced from Theorem~\ref{T:char-conv}.

\begin{theorem}\label{T:char-alg}
	Let $\Gamma$ be an lcsc group, $(B_+,B_-)$ a boundary pair for $\Gamma$, $G$ a non-compact connected simple Lie group,
	and $\rho:\Gamma\to G$ a homomorphism.
	Assume that $\rho(\Gamma)$ is Zariski dense in $G$.
	Then there exist $\Gamma$-maps $\phi_-:B_-\to G/P$, $\phi_+:B_+\to G/P$ and $\phi_{\bowtie}:B_-\times B_+\to G/A'$
 	such that
	\[	
		\pr_1\circ \phi_{\bowtie}(x,y)=\phi_-(x),\qquad \pr_2\circ \phi_{\bowtie}(x,y)=\phi_+(y)\qquad(x\in B_-,\,y\in B_+),
	\]
	and
	\begin{itemize}
		\item[{\rm (i)}] $\Map_\Gamma(B_-,\Prob(G/P))=\{ \delta\circ\phi_-\}$,
		            $\quad\Map_\Gamma(B_+,\Prob(G/P))=\{ \delta\circ \phi_+\}$,
		\item[{\rm (ii)}] $\Map_\Gamma(B_-\times B_+,G/P) = \{ \pr_1\circ w\circ\phi_{\bowtie}~|~w\in \cW{G}\}$,
		\item[{\rm (iii)}] $\Map_\Gamma(B_-\times B_+,G/A') = \{ w\circ \phi_{\bowtie}~|~w\in \cW{G}\}$.
	\end{itemize}
\end{theorem}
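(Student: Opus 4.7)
The proof will follow the three-step pattern established in the proofs of Theorems~\ref{T:char-conv} and~\ref{T:char-circle}, with Zariski density of $\rho(\Gamma)$ playing the role that proper or minimal--strongly-proximal hypotheses did there. First, by amenability of $B_\pm$, pick $\Phi_\pm\in\Map_\Gamma(B_\pm,\Prob(G/P))$. The crucial point, establishing (i), is to upgrade these to Dirac-valued maps via a Furstenberg--Zimmer style argument. The key input is the classical fact that any probability measure $\mu$ on the flag variety $G/P$ which is not a Dirac mass has $G$-stabilizer $\Stab_G(\mu)$ contained in a proper real-algebraic subgroup, and that the assignment $\mu\mapsto\Stab_G(\mu)$ is Borel measurable into the countable collection of conjugacy classes of algebraic subgroups. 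If $\Phi_-$ were not essentially Dirac, one would therefore obtain a $\Gamma$-equivariant measurable map from $B_-$ into some proper algebraic homogeneous $G$-space $G/N$; ergodicity of $B_-$ would restrict its essential image to a single $G$-orbit, forcing $\rho(\Gamma)$ into a conjugate of $N$ and contradicting Zariski density. Hence $\Phi_\pm=\delta\circ\phi_\pm$ for $\phi_\pm\in\Map_\Gamma(B_\pm,G/P)$, and uniqueness follows by averaging two candidates $\frac12(\Phi^1_\pm+\Phi^2_\pm)$ and rerunning the argument.

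Second, to construct $\phi_{\bowtie}$: the $\Gamma$-map $\phi_-\times\phi_+:B_-\times B_+\to G/P\times G/P$ has essential image, by isometric ergodicity of $B_-\times B_+$ (Remark~\ref{R:boundaries}(1)), contained in a single $G$-orbit. The Bruhat decomposition partitions $G/P\times G/P$ into $|\cW{G}|$ orbits, with the unique open (big) orbit identified with $G/A'$ via $\pr_1\times\pr_2$. The non-big orbits have the form $G/H$ with $A'\subsetneq H$ a proper algebraic subgroup, so the Zariski-density argument of Step~1 applies verbatim to rule them out: landing in such an orbit would yield a $\Gamma$-map to a proper algebraic $G$-variety, contradicting Zariski density of $\rho(\Gamma)$. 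Hence $(\phi_-(x),\phi_+(y))$ lies essentially in the big orbit, and the composition $\phi_{\bowtie}=(\pr_1\times\pr_2)^{-1}\circ(\phi_-\times\phi_+)$ satisfies the required compatibility $\pr_1\circ\phi_{\bowtie}(x,y)=\phi_-(x)$ and $\pr_2\circ\phi_{\bowtie}(x,y)=\phi_+(y)$.

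Third, for (ii) and (iii): given $\psi\in\Map_\Gamma(B_-\times B_+,G/P)$, form the triple $\Psi=(\phi_-\circ\pr_-,\psi,\phi_+\circ\pr_+):B_-\times B_+\to(G/P)^3$. Ergodicity forces $\Psi$ into a single $G$-orbit of $(G/P)^3$; since the outer coordinates project to the big orbit $G/A'\subset(G/P)^2$ by Step~2, the admissible orbits are parameterized by the position of the middle flag relative to a generic pair of flags, i.e.\ by the $\cW{G}$-action on $G/A'$. This gives $\psi=\pr_1\circ w\circ\phi_{\bowtie}$ for some $w\in\cW{G}$, proving (ii); statement (iii) follows by applying (ii) to each coordinate of any $\psi\in\Map_\Gamma(B_-\times B_+,G/A')$ and reconstructing $\psi$ through the embedding $G/A'\hookrightarrow(G/P)^2$.

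The principal technical obstacle is the Furstenberg--Zimmer input invoked repeatedly above: establishing both that $\Stab_G(\mu)$ is a proper algebraic subgroup for non-Dirac $\mu\in\Prob(G/P)$, and that $\mu\mapsto\Stab_G(\mu)$ is Borel measurable into the countable space of conjugacy classes of algebraic subgroups of $G$. Once this classical ingredient is in place, the proof reduces to a clean iteration of the pattern ``amenability produces a measure-valued map; Zariski density forces Dirac reduction; ergodicity pins down the $G$-orbit''. As with Theorem~\ref{T:char-conv}, only ordinary (in fact isometric) ergodicity of $B_-\times B_+$ is needed --- the relative isometric ergodicity of the projections does not appear to play a direct role here.
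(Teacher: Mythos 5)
Your central step does not work. You argue that a $\Gamma$-map from a boundary $B_-$ to a proper algebraic homogeneous $G$-space $G/N$ forces $\rho(\Gamma)$ into a conjugate of $N$, contradicting Zariski density. But ergodicity of $B_-$ only confines the essential image of $\Phi_-:B_-\to\Prob(G/P)$ to a single $G$-orbit $G.\mu\cong G/G_\mu$; it does not make the resulting $\Gamma$-map $B_-\to G/G_\mu$ constant. In the proof of Theorem~\ref{T:char-conv} such a map \emph{is} shown to be essentially constant, but only because properness supplies a compact stabilizer, hence a $G$-invariant metric on a coset space $G/K$, and then isometric ergodicity applies. For an amenable non-compact stabilizer such as a parabolic there is no such invariant metric, the map need not be constant, and no contradiction arises -- indeed the theorem itself produces a $\Gamma$-map $B_-\to G/P$ into a proper algebraic homogeneous space, which your reasoning would declare impossible. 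The same flaw undermines your Step~2, where you attempt to exclude the non-open Bruhat cells $G/(P\cap wPw^{-1})$ by the ``same argument.''

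Consequently the ingredient you dispense with at the end -- relative isometric ergodicity of the projections $\pr_\pm:B_-\times B_+\to B_\pm$ -- is exactly what is required, and the paper singles it out explicitly for this theorem. In the actual argument one uses the universal algebraic-hull maps $\phi_\pm:B_\pm\to G/H_\pm$ and $\phi_0:B_-\times B_+\to G/H_0$ of \cite{BF-csr}. Amenability of $B_\pm$ gives amenability of $H_\pm$ (via the orbit and stabilizer facts for $\Prob(G/P)$ that you cite). Relative isometric ergodicity of $\pr_+$ is then used to lift along the map $G/H_0R_+\to G/H_+$, whose fibers carry an invariant metric because $H_+/R_+$ is compact-by-abelian ($R_+$ the unipotent radical of $H_+$); the universal property of $\phi_+$ then forces $H_+=H_0R_+$, and similarly $H_-=H_0R_-$. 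Only at that point does Zariski density enter, to show $R_+H_0R_-$ is Zariski dense in $G$, after which an algebraic-group lemma identifies $H_\pm$ as opposite parabolics with $H_0$ their intersection, minimal by amenability. Without the relative isometric ergodicity step, nothing rules out $H_+$ being a proper amenable subgroup strictly smaller than any minimal parabolic, and then there is no way to conclude that $\Phi_\pm$ is Dirac-valued, which is where your proof of (i) -- and with it (ii) and (iii) -- breaks down.
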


As we have mentioned the above theorem is aimed at higher rank target groups $G$, where $\cW{G}$ has more than just $\{e,\wlong\}$.
In the forthcoming paper \cite{BF-bnd} we prove a general version of  Theorem~\ref{T:char-alg}, which is valid for algebraic groups
$G$ defined over an arbitrary local field (in fact over any spherically complete field).
The proof uses the formalism of representations of ergodic actions, developed in our recent paper \cite{BF-csr}.
There we show that for every ergodic Lebesgue $\Gamma$-space $X$ there exists an algebraic subgroup $H<G$
and $\phi\in\Map_\Gamma(X,G/H)$ having the following universal property:
for every $G$-variety $V$ and $\psi\in\Map_\Gamma(X,V)$ there exists a $G$-algebraic morphism $\pi:G/H\to V$ so that $\psi=\pi\circ \phi$ a.e. on $X$
(this is closely related to Zimmer's notion of algebraic hull, see also \cite{Burger-ACampo}).
We apply this result to our setting and let $\phi_+:B_+\to G/H_+$, $\phi_-:B_-\to G/H_-$ and $\phi_0:B_+\times B_-\to G/H_0$ be the corresponding
universal $\Gamma$-maps.
Theorem~\ref{T:char-alg} follows easily once we show that $H_+=H_-=P$ and $H_0=A'$ up to conjugations.
It is precisely this point, where relative isometric ergodicity of $\pr_\pm:B_-\times B_+\to B_\pm$ is used.

\begin{proof}[Sketch of the proof]
We first explain that the amenability of $B_+$ implies that $H_+$ is amenable.
Indeed, there exists a boundary map $B_+\to\Prob(G/P)$ and the ergodicity of $B_+$ implies that its image is essentially contained in a unique $G$-orbit,
as the $G$-orbits on $\Prob(G/P)$ are locally closed \cite{Zimmer-book}.
We get a map $B_+\to G.\mu\simeq G/G_\mu$ for some $\mu\in\Prob(G/P)$.
The stabilizer in $G$ of any point  of $\Prob(G/P)$ is amenable and algebraic (we work over $\bbR$).
In particular, $V=G/G_\mu$ is algebraic, and by the universal property of $H_+$ there exists a $G$-map $G/H_+\to G/G_\mu$.
Thus, up to conjugation, $H_+<G_\mu$. In particular, $H_+$ is amenable.
Similarly $H_-$ is amenable.

Considering the composed map $\phi:B_-\times B_+\to B_+\to G/H_+=V$ and using the universal property of $\phi_0:B_-\times B_+ \to G/H_0$,
we get a $G$-map $\pi:G/H_0\to G/H_+$ such that $\pi\circ\phi_0=\phi_+\circ \pr_+$.
We assume, as we may (by conjugating), that $H_0<H_+$. 
Denoting by $R_+$ the unipotent radical of $H_+$, 
we obtain the containment $H_0<H_0 R_+<H_+$, and the corresponding $G$-maps $G/H_0\to G/H_0R_+\to G/H_+$.
We get the following commutative diagram
	\[
		\xymatrix{
		B_-\times B_+ \ar[d]_{\pr_+} \ar[r] & G/H_0R_+ \ar[d]^q\\
		B_+ \ar[r]^{\phi_+} \ar@{.>}[ur]^{\psi} & G/H_+
		}
	\]
in which the existence of the map $\psi$ is guaranteed by the isometric ergodicity of $\pr_+$.
Indeed, $q$ is fiber-wise $\Gamma$-isometric as its fibers could be identified with $(H_+/R_+)/(H_0\cap R_+)$
which carries an $H_+/R_+$-invariant metric, as the latter group is compact by abelian (since it is reductive and amenable).
By the universal property of $\phi_+$, we conclude that $q$ is an isomorphism.
We therefore obtain $H_+=H_0R_+$.
Similarly, denoting by $R_-$ the unipotent radical of $H_-$ we obtain $H_-=H_0R_-$.

The composed $\phi_+\times \phi_-:B_+\times B_-\to G/H_0\to G/H_+\times G/H_-$ is $\Gamma\times \Gamma$-equivariant,
hence the Zariski closure of its essential image is $\rho(\Gamma)\times \rho(\Gamma)$-invariant.
Since $\rho(\Gamma)$ is Zariski dense in $G$, it follows that the image of $G/H_0$ is Zariski dense in $G/H_+\times G/H_-$.
Equivalently, the set $R_+H_0R_-$ is Zariski dense in $G$.
At this point the proof reduces to the following algebraic group theoretic lemma.

\begin{lemma}
Let $G$ be a reductive group, $H_0<H_+,H_-<G$ algebraic subgroups.
Assume that $H_+=H_0R_+$, $H_-=H_0R_-$ and $R_+H_0R_-$ is Zariski dense in $G$,
where $R_+$ and $R_-$ are the unipotent radicals of $H_+$ and $H_-$ correspondingly.
Then $H_+$ and $H_-$ are opposite parabolics in $G$, and $H_0$ is their intersection.
\end{lemma}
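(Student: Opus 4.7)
The plan is to realize $H_\pm$ as opposite parabolics of $G$ associated to a one-parameter subgroup of a common maximal torus. First I would show that $H_0$ contains a maximal torus $T$ of $G$: since $H_0$ normalizes each of the unipotent subgroups $R_\pm$ and $R_+ H_0 R_-$ is Zariski dense, any maximal torus of $H_0$ must already be maximal in $G$, for otherwise a proper containment would yield semisimple directions in $\operatorname{Lie}(G)$ that the unipotent parts $R_\pm$ cannot absorb in the density equation.

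With $T$ fixed, I would decompose $\operatorname{Lie}(G) = \operatorname{Lie}(T) \oplus \bigoplus_{\alpha \in \Phi} V_\alpha$ into $T$-weight spaces. Since $R_\pm$ and $H_0$ are $T$-stable, there are subsets $\Phi_\pm, \Phi_0 \subseteq \Phi$ with $\operatorname{Lie}(R_\pm) = \bigoplus_{\alpha \in \Phi_\pm}V_\alpha$ and $\operatorname{Lie}(H_0) \supseteq \operatorname{Lie}(T) \oplus \bigoplus_{\alpha \in \Phi_0}V_\alpha$. Unipotence of $R_\pm$ forces $\Phi_\pm$ to be closed under root addition and to contain no $\pm\alpha$ pair, while the fact that $H_0$ normalizes $R_\pm$ gives $(\Phi_0 + \Phi_\pm) \cap \Phi \subseteq \Phi_\pm$. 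Zariski density of $R_+ H_0 R_-$ forces $\operatorname{Lie}(R_+) + \operatorname{Lie}(H_0) + \operatorname{Lie}(R_-) = \operatorname{Lie}(G)$, whence $\Phi_+ \cup \Phi_0 \cup \Phi_- = \Phi$; comparing against the dimension bound $\dim G \le \dim R_+ + \dim H_0 + \dim R_-$ forces this union to be disjoint, $H_0$ to be reductive with root system exactly $\Phi_0$, and $H_0 \cap R_\pm$ to be trivial.

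The main step is then to produce a cocharacter $\lambda \in X_*(T) \otimes \bbR$ with $\langle \alpha, \lambda\rangle > 0$ for $\alpha \in \Phi_+$, $\langle \alpha, \lambda\rangle < 0$ for $\alpha \in \Phi_-$, and $\langle \alpha, \lambda\rangle = 0$ for $\alpha \in \Phi_0$. The accumulated properties (closure under addition, no opposite pairs within each $\Phi_\pm$, stability under $\Phi_0$-translation) are exactly the conditions that make $\Phi_0 \sqcup \Phi_+$ and $\Phi_0 \sqcup \Phi_-$ opposite parabolic subsets of $\Phi$ with common Levi part $\Phi_0$, and the separating $\lambda$ then exists by standard root-system combinatorics. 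Given $\lambda$, the dynamical parabolic $P(\lambda) = \{g \in G \mid \lim_{t \to 0}\lambda(t) g \lambda(t)^{-1} \text{ exists}\}$ has Levi $\calZ_G(\lambda)$ and unipotent radical $U(\lambda)$, whose Lie algebras coincide with those of $H_0$ and $R_+$ respectively. Hence $P(\lambda) = H_0 R_+ = H_+$, and likewise $P(-\lambda) = H_-$, exhibiting these as opposite parabolics with $H_+ \cap H_- = \calZ_G(\lambda) = H_0$.

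I expect the main obstacle to be the combinatorial passage from the closure/symmetry data on the $\Phi_\pm$ to the existence of the separating cocharacter $\lambda$, i.e.\ the verification that $\Phi_0 \sqcup \Phi_\pm$ is genuinely a parabolic subset of $\Phi$; this is where the unipotence of $R_\pm$ and the Zariski density of $R_+ H_0 R_-$ must be used in tandem most delicately. Once that combinatorial input is in place, the identification of $H_\pm$ with $P(\pm\lambda)$ is routine.
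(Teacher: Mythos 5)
For the record, the paper does not prove this lemma. It is stated in the sketch of the proof of Theorem~\ref{T:char-alg} as a purely algebraic input, and the full treatment is deferred to the forthcoming reference \cite{BF-bnd}. So there is no in-paper proof to compare your argument against; what follows is an assessment of your proposal on its own terms.

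Your overall strategy---root decomposition relative to a maximal torus contained in $H_0$, identification of parabolic subsets of $\Phi$, a separating cocharacter, and dynamical parabolics---is a natural and ultimately workable route. But the pivotal disjointness step has a genuine gap. You claim that ``comparing against the dimension bound $\dim G \le \dim R_+ + \dim H_0 + \dim R_-$'' forces $\Phi_+\cup\Phi_0\cup\Phi_-$ to be disjoint. That inequality, however, is just a restatement of the spanning $\operatorname{Lie}(R_+)+\operatorname{Lie}(H_0)+\operatorname{Lie}(R_-)=\operatorname{Lie}(G)$, i.e.\ of the covering $\Phi=\Phi_+\cup\Phi_0\cup\Phi_-$ itself; it points in the same direction and cannot yield disjointness. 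You would need the \emph{reverse} inequality, which nothing in your argument supplies---and which is equivalent to the nontrivial assertion that $H_0$ is reductive (i.e.\ $H_0\cap R_\pm=\{e\}$). The right way to get this is combinatorial rather than dimensional: the unipotent radical $U_0$ of $H_0$ equals both $H_0\cap R_+$ and $H_0\cap R_-$, so its root set $\Psi_0=\Phi_0\cap\Phi_+=\Phi_0\cap\Phi_-$ has no opposite pair, while $\Phi_0\setminus\Psi_0$ is the root system of a Levi of $H_0$ and hence symmetric. For $\alpha\in\Psi_0$, the negative $-\alpha$ is excluded from $\Phi_+$, from $\Phi_-$, and from $\Psi_0$, so lands in the symmetric part $\Phi_0\setminus\Psi_0$, which forces $\alpha$ there as well---a contradiction. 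Hence $\Psi_0=\emptyset$, $H_0$ is reductive, $\Phi_0=-\Phi_0$, and the disjointness and the relations $-\Phi_\pm=\Phi_\mp$ follow formally. Relatedly, your first step (that a maximal torus $T_0$ of $H_0$ is maximal in $G$) is asserted by a heuristic about ``semisimple directions the unipotent parts cannot absorb'' rather than proved; one can fix it by passing to $Z=\calZ_G(T_0)^\circ$, taking $T_0$-invariants of the spanning identity to write $\operatorname{Lie}(Z)$ as $\operatorname{Lie}(T_0)$ plus nilpotent pieces lying in at most two unipotent subgroups of $Z/T_0$, and using that a nontrivial connected reductive group has dimension exceeding twice that of any unipotent subgroup. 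With these two steps supplied, the rest of your outline (closure, $\Phi_0$-stability, separating cocharacter, $P(\pm\lambda)=H_\pm$) goes through.
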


Finally, by the amenability of $H_+$, $H_-$ these parabolics must be minimal in $G$,
$H_0$ conjugate to $A'$, and the result follows.
\end{proof}

\section{Applications to Rigidity} % (fold)
\label{sec:rigidity}\hfill{}\\
Let us now demonstrate how boundary theory can be used
to obtain restrictions on linear representations,
convergence actions, and actions on the circle.
These results are inspired by the celebrated Margulis' superrigidity \cite{Margulis-ICM, Margulis-book},
and the developments that followed, including \cite{Zimmer-book,  Burger+Mozes, Ghys-lattices}.
Our aim is to illustrate the techniques rather than to obtain most general results,
in particular we do not state the cocycle versions of the results that can be obtained by similar methods.

\subsection*{Convergence action of a lattice in a product}

Consider a homomorphism $\rho:\Gamma\to G$ where $G\acts M$ is a convergence action.
In view of Remark~\ref{R:conv-min}, we may assume $\rho(\Gamma)$ is dense in $G$ and $G\acts M$ is a minimal convergence action,
${\rm card}(M)=2^{\alpha_0}$ and $G$ is non-compact.
\begin{theorem} \label{T:conv-prod}
Let $S=S_1\times \cdots \times S_n$ be a product of lcsc groups, $\Gamma<S$ a lattice, such that
$\pr_i(\Gamma)$ is dense in $S_i$ for each $i\in\{1,\dots,n\}$.
Assume $G\acts M$ is a minimal convergence action, $G$ is not compact, ${\rm card}(M)>2$,
and $\rho:\Gamma\to G$ is a continuous homomorphism with a dense image.
Then for some $i\in\{1,\ldots,n\}$ there exists a continuous homomorphism $\bar{\rho}:S_i\to G$
such that $\rho=\bar{\rho}\circ \pr_i$.
\end{theorem}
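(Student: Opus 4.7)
The plan is to use Theorem~\ref{T:char-conv} to extract a rigid $\Gamma$-equivariant boundary map out of a product $\Gamma$-boundary reflecting the product structure of $S$, to show this map factors through a single coordinate, and then to promote this factorization to the level of $\rho$ using density. By Remark~\ref{R:conv-min} I may assume $\rho(\Gamma)$ is dense in $G$ and $G\acts M$ is a minimal convergence action. For each $i$, fix an $S_i$-boundary $B_i$ and set $B=B_1\times\cdots\times B_n$; this is an $S$-boundary by Remark~\ref{R:boundaries}.(4), and since $\Gamma<S$ is a lattice, a $\Gamma$-boundary by Remark~\ref{R:boundaries}.(3). Theorem~\ref{T:char-conv} applied to the $\Gamma$-boundary pair $(B,B)$ yields unique $\Gamma$-maps $\phi_-,\phi_+\colon B\to M$ satisfying $\Map_\Gamma(B,\Prob(M))=\{\delta\circ\phi_\pm\}$, with $\phi_-\times\phi_+$ essentially valued in $M^{(2)}$.

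The core step, and the main technical obstacle, is to show that $\phi_+$ factors through $\pr_i\colon B\to B_i$ for some $i\in\{1,\dots,n\}$. I let $J\subseteq\{1,\dots,n\}$ be the minimal subset such that $\phi_+$ factors through $\pr_J\colon B\to B_J=\prod_{j\in J}B_j$, and analogously $J'$ for $\phi_-$, and aim for $|J|=|J'|=1$. The mechanism is the uniqueness in Theorem~\ref{T:char-conv}(i): any $\Gamma$-map from $B$ to $\Prob(M)$ coincides with $\delta\circ\phi_+$. Writing $B=B_J\times B_{\hat J}$ and forming the ``twisted'' $\Gamma$-equivariant map $\tilde\phi_+\colon B_{\hat J}\to L^0(B_J,M)$ (where the $\Gamma$-action on the target combines $\rho$ with $\pr_J$), the $\Gamma$-ergodicity of $B_{\hat J}$ (coming from the dense-projection assumption on $\Gamma$) and the rigidity of $\phi_+$ together force $\tilde\phi_+$ to be essentially constant, giving the factorization of $\phi_+$ through $\pr_J$; the essential distinctness of $\phi_-\times\phi_+$ in $M^{(2)}$ then rules out $|J|\ge 2$. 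This is closely analogous to Margulis' factor theorem, and is the step where relative isometric ergodicity and the dense-projection lattice hypothesis are both crucially used.

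Once $\phi_+=\bar\phi_+\circ\pr_i$ is established, I extend $\rho$ to $S_i$. The $\Gamma$-equivariance of $\bar\phi_+$ reads $\bar\phi_+(\pr_i(\gamma)\cdot b)=\rho(\gamma)\bar\phi_+(b)$ a.e.; in particular, for $\gamma\in\Gamma_{\hat i}:=\ker(\pr_i|_\Gamma)$, $\rho(\gamma)$ fixes a.e.\ value of $\bar\phi_+$. Since the essential image of $\bar\phi_+$ contains more than two points, the compactness of triple stabilizers in the convergence action $G\acts M$ forces $\rho(\Gamma_{\hat i})$ into a compact subgroup of $G$. Normality of $\Gamma_{\hat i}$ in $\Gamma$, combined with density of $\rho(\Gamma)$ in $G$ and the reductions of Remark~\ref{R:conv-min}, rules out non-trivial $\rho(\Gamma_{\hat i})$. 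Hence $\rho$ descends to a homomorphism $\tilde\rho\colon\pr_i(\Gamma)\to G$, and density of $\pr_i(\Gamma)$ in $S_i$ together with a closed-graph argument applied to $(\pr_i,\rho)\colon\Gamma\to S_i\times G$ extends $\tilde\rho$ continuously to the required $\bar\rho\colon S_i\to G$ with $\rho=\bar\rho\circ\pr_i$.
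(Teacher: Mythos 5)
Your high-level plan matches the paper's: form the product $\Gamma$-boundary $B=B_1\times\cdots\times B_n$, apply Theorem~\ref{T:char-conv} to get a characteristic map, prove this map factors through one coordinate $B_i$, then extend $\rho$ to $S_i$. However, the two places where the real work happens are argued differently in your proposal and each has a genuine gap.

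\textbf{The factorization step.} The paper's mechanism is very specific and you do not reproduce it: it considers the $\Gamma$-map $\Phi\colon B\times B\to M^2$, $\Phi\bigl((x,y),(x',y')\bigr)=\bigl(\phi(x,y),\phi(x,y')\bigr)$ (for $n=2$, say) and applies the classification of Theorem~\ref{T:char-conv}(iii). Since $\Map_\Gamma(B\times B,M^{(2)})=\{\phi_{\bowtie},\tau\circ\phi_{\bowtie}\}$, either $\Phi$ lands in the diagonal (so $\phi$ is independent of $y$), or $\Phi=\phi_{\bowtie}$ (so $\phi$ is independent of $x$), or $\Phi=\tau\circ\phi_{\bowtie}$ (so $\phi$ is essentially constant, contradicting non-elementarity). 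Your version---defining $J$ as a minimal set through which $\phi_+$ factors, forming a map $\tilde\phi_+\colon B_{\hat J}\to L^0(B_J,M)$, and invoking ``ergodicity of $B_{\hat J}$'' to force it constant---is circular as stated (if $J$ is already minimal, the factorization through $\pr_J$ is given, not proved), and the concluding claim that ``essential distinctness of $\phi_-\times\phi_+$ rules out $|J|\ge 2$'' is not substantiated; it is not clear why distinctness would control the size of $J$. You would need something like the paper's use of (iii) to make this work. Note also that your remark that ``relative isometric ergodicity\dots [is] crucially used'' here is incorrect: the paper explicitly points out, right after the proof of Theorem~\ref{T:char-conv}, that the proof uses only amenability of $B_\pm$ and isometric ergodicity of $B_-\times B_+$ and \emph{not} relative isometric ergodicity of the projections; and the proof of Theorem~\ref{T:conv-prod} uses only the statement of Theorem~\ref{T:char-conv}.

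\textbf{The extension step.} The paper formulates and applies Lemma~\ref{L:extension}, which builds the extension directly from the pushed-forward measure $\eta=\phi_*\mu\in\Prob(M)$ by verifying conditions ${\rm (*)}_1$, ${\rm (*)}_2$. Your route---kill $\rho(\Gamma_{\hat i})$ by a compactness-plus-normality argument, descend to $\pr_i(\Gamma)$, and then extend to $S_i$ by a closed-graph argument on $\overline{\{(\pr_i\gamma,\rho\gamma)\}}\subset S_i\times G$---is plausible in outline but leaves two points unaddressed. First, to conclude $\rho(\Gamma_{\hat i})=\{e\}$ you implicitly need that a nontrivial compact normal subgroup of $G$ cannot exist under the hypotheses; this is true for minimal convergence actions on $M$ with $\card(M)>2$ but requires an argument (the target $G$ is an arbitrary convergence group, not a simple Lie group). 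Second, the closed-graph argument needs the graph closure $H\subset S_i\times G$ to have trivial fiber over $e\in S_i$, i.e.\ $H\cap(\{e\}\times G)=\{e\}$; that intersection is a normal subgroup of $G$ (by density of $\pr_i(\Gamma)$), and ruling it out requires the same kind of argument --- the paper sidesteps all of this through Lemma~\ref{L:extension}, whose conditions ${\rm (*)}_1$ and ${\rm (*)}_2$ are verified directly from the convergence property and full support of $\eta$.
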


\begin{proof}[Sketch of the proof for $n=2$]
Choose a boundary $B_i$ for each $S_i$, for example using Theorem~\ref{T:FP}, and set $B=B_1\times B_2$.
Then $B$ is an $S$-boundary and a $\Gamma$-boundary (Remark~\ref{R:boundaries}).
By Theorem~\ref{T:char-conv} we have a unique $\Gamma$-map $\phi:B\to M$.
Consider the map
\[
	\Phi:B\times B=B_1\times B_2 \times B_1\times B_2 \ \overto{}\ M^2,\quad
	(x,y,x',y') \mapsto (\phi(x,y),\phi(x,y')).
\]
% \[
% 	\begin{split}
% 	\Phi:&B\times B=B_1\times B_2 \times B_1\times B_2 \ \overto{}\ M^2, \\
% 	&(x_1,x_2,y_1,y_2) \mapsto (\phi_-(x_1,x_2),\phi_+(y_1,y_2)).
% 	\end{split}
% \]
By Theorem~\ref{T:char-conv}(iii) we have three cases: $\Phi(B\times B)$ is contained in the diagonal $\Delta\subset M^2$,
$\Phi=\phi_{\bowtie}$, or $\Phi=\tau\circ\phi_{\bowtie}$, where $\phi_{\bowtie}=\phi\times \phi$ and $\tau(m,m')=(m',m)$.
In the first case we see that $\phi(x,y)$ is independent of $y\in B_2$, and therefore descends to a $\Gamma$-map $B_1\to M$.
In the second case, $\phi$ is independent of $x\in B_1$, and descends to $B_2\to M$.
The third case gives that $\phi$ is independent of both parameters, thus its essential image is a $\Gamma$-fixed point in $M$.
This is incompatible with $\rho(\Gamma)$ being non-elementary.
We conclude that for some $i\in\{1,2\}$, $\phi:B_1\times B_2\to M$ factors through $B_i$.
%Since in the group of automorphisms of $B_i$ the image of $\Gamma$ is dense in the image of $S$ which factors via $S_i$.
We shall apply the following general lemma, letting $X=B_i$, and $\Lambda=\pr_i(\Gamma)$, which is dense in $T=S_i$.

\begin{lemma}\label{L:extension}
	Let $T$ be a lcsc group, $\Lambda<T$ a dense subgroup, $M$ a compact metrizable space, $G<\Homeo(M)$ a closed subgroup and
	$\rho:\Lambda\to G$ a homomorphism.
	Assume that there exists a measurable $T$-space $(X,\mu)$, a $\Lambda$-map $\phi:X\to M$ so that
	the $G$-action on $M$ with $\eta=\phi_*\mu\in\Prob(M)$ satisfies the following condition:
	\begin{itemize}
		\item[${\rm (*)}$] 
		a sequence $\{g_n\}$ in $G$ satisfies $g_n\to e$ in $G$ if (and only if)
			\begin{equation}\label{e:gnhk}
				\int_M(h\circ g_n-h)\cdot k\,d\eta\to 0\qquad (h,k\in C(M))
			\end{equation}
	\end{itemize}
	Then $\rho:\Lambda\to G$ extends to a continuous homomorphism $\bar{\rho}:T\to G$.
	
	Furthermore, the combination of the following two conditions imply the condition (*) defined above:
	\begin{itemize}
		\item[${\rm (*)}_1$]
		$(g_n)_*\eta\to \eta$ in weak-* topology $\quad\Longrightarrow\quad \{g_n\}$ is bounded in $G$,
		\item[${\rm (*)}_2$]
		$\forall g\in G\setminus\{e\}$, $\exists k,h\in C(M)$ so that $\int_M (h\circ g-h)\cdot k\,d\eta \ne 0$.
	\end{itemize}
\end{lemma}
\begin{proof}
	Since $\Lambda$ is dense in $T$, existence of a continuous extension $\bar{\rho}:T\to G$ is equivalent to
	showing that $g_n=\rho(\lambda_n)\to e$ in $G$ for every sequence $\{\lambda_n\}$ in $\Lambda$ with $\lambda_n\to e$ in $T$.
	The $T$-action by pre-composition on $L^\infty(X,\mu)$, equipped with the weak-* topology from $L^1(X,\mu)$,
	is continuous.
	Take $\Lambda\ni\lambda_n\to e$ in $T$, functions $h,k\in C(M)$, and define $\tilde{h}\in L^\infty(X,\mu)$,
	$\tilde{k}\in L^1(X,\mu)$ by $\tilde{h}=h\circ \phi$, $\tilde{k}=k\circ \phi$.
	Then
	\[
			\int_M(h\circ \rho(\lambda_n)-h)\cdot k\,d\eta=\int_X (\tilde{h}\circ \lambda_n-\tilde{h})\cdot \tilde{k}\,d\mu\to0.
	\]
	Hence (\ref{e:gnhk}) detects the convergence $\rho(\lambda_n)\to e$ in $G$.
	
	To see that ${\rm (*)}_1+{\rm (*)}_2\ \Longrightarrow\ {\rm (*)}$, note that for any sequence $\{g_n\}$ in $G$ with (\ref{e:gnhk}),
	there is weak-* convergence $(g_n)_*\eta\to \eta$ by taking $k=1$ and varying $h\in C(M)$.
	Thus $\{g_n\}$ is precompact in $G$ by ${\rm (*)}_1$.
	Condition ${\rm (*)}_2$ implies that $e$ is the only possible limit point for $\{g_n\}$.
	This completes the proof of the Lemma.
\end{proof}
To complete the proof of Theorem~\ref{T:conv-prod} we check conditions ${\rm (*)}_1$, ${\rm (*)}_2$.
In our context $\supp(\eta)=M$, because $\supp(\eta)$ is a $\rho(\Gamma)$-invariant closed subset of $M$,
while $\rho(\Gamma)$ is dense in $G$ and $G\acts M$ is minimal.
This implies ${\rm (*)}_2$.
For ${\rm (*)}_1$ observe that it follows from the convergence property of $G\acts M$
that if $g_i\to\infty$ in $G$ and $(g_i)_*\eta\to \xi\in\Prob(M)$, then $\xi$ is supported on one or two points,
while $\supp(\eta)=M$ is a continuum.
Therefore the conditions of Lemma are satisfied, and we get a continuous extension $\bar\rho:S_i\to G$ as claimed.
\end{proof}

\subsection*{Weyl groups}

The argument showing that the map $\phi\in\Map_\Gamma(B,M)$ factors through one of the boundaries $B_i$ in the proof of Theorem~\ref{T:conv-prod},
might appear to be ad-hoc. But in fact, it can be maid conceptual as follows.
Given a group $\Gamma$ and a choice of a $\Gamma$-boundary, define the associated \emph{generalized Weyl group} to be
\[
	\gW{\Gamma}{B}=\Aut_\Gamma(B\times B),
\]
the group of measure class preserving automorphisms of $B\times B$ that commute with $\Gamma$.
For non-amenable $\Gamma$, a $\Gamma$-boundary cannot be trivial, so $\gW{\Gamma}{B}$ always contains
the non-trivial involution $\wflip:(x,y)\mapsto (y,x)$.

\begin{example} \label{ex:weyl-prod}
For a boundary which is a product of $\Gamma$-spaces, $B=\prod_{i\in I} B_i$, the generalized Weyl group contains a
subgroup isomorphic to $\prod_{i\in I} \bbZ/2\bbZ$ obtained by flipping factors of $B\times B \simeq \prod_{i\in I} (B_i\times B_i)$.
\end{example}

Given a Borel $\Gamma$-space $V$, $\gW{\Gamma}{B}$ acts on $\Map_\Gamma(B\times B,V)$ by precompositions.
For any $\Gamma$-map $\phi:B \to V$ we obtain a subgroup of $\gW{\Gamma}{B}$ - the stabilizer of
$\phi\circ \pr_+\in \Map_\Gamma(B\times B,V)$ under this action.
The subgroups obtained this way are called \emph{special subgroups}.

It is easy to check that the special subgroups of $\prod_{i\in I} \bbZ/2\bbZ$ in Example~\ref{ex:weyl-prod} are the subgroups
of the form $\prod_{i\in J} \bbZ/2\bbZ$ for $J\subset I$.
In the setting of convergence actions,
Theorem~\ref{T:char-conv}(ii) shows that the action of $\gW{\Gamma}{B}$ on $\Map_\Gamma(B\times B,M)$
must factor trough a group of order two. 
The kernel of this action is clearly a special subgroup, it is the stabilizer of $\phi\circ \pr_+$,
and one deduces that
$\phi:\prod B_i\to M$ factors through $B_i\to M$ for some $i\in I$.
Invoking now Lemma~\ref{L:extension}, one obtains this way an alternative proof of Theorem~\ref{T:conv-prod}.

Considering now a lattice in a product of groups acting on a circle, we may apply a similar strategy.
By Theorem~\ref{T:char-circle}(ii) the action of $\gW{\Gamma}{B}$ on $\Map_\Gamma(B\times B,S^1)$ factors through a cyclic subgroup.
Considering again the subgroup $\prod_{i\in I} \bbZ/2\bbZ$ and its special subgroups, we conclude that
$\phi:\prod B_i\to S^1$ factors through $B_i\to S^1$ for some $i\in I$.
The extension Lemma~\ref{L:extension} applies to $G=\Homeo_+(S^1)$, and one deduces the following.

\begin{theorem}\label{T:circle-prod}
	Let $\Gamma$ be an irreducible lattice in a product $S=S_1\times \cdots\times S_n$ of lcsc groups  as in Theorem~\ref{T:conv-prod}.
	Let $\rho:\Gamma\to \Homeo_+(S^1)$ be a minimal strongly proximal action on the circle.
	Then, $\rho$ extends to a continuous homomorphism that factors through some $S_i$,
	namely $\bar\rho_i:S_i\to \Homeo_+(S^1)$ so that $\rho=\bar\rho_i\circ \pr_i$.
	Moreover, if $\bar\rho_i(S_i)$ is non-discrete, then it could be conjugated to $\PSL_2(\bbR)<\Homeo_+(S^1)$,
	so $\Gamma$ may be assumed to act via fractional linear transformations.
\end{theorem}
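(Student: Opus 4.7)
The plan is to execute the generalized Weyl-group argument outlined in the paragraphs preceding the statement, then apply the extension Lemma~\ref{L:extension} to push the homomorphism through to a single factor $S_{i_0}$, and finally invoke Ghys's classification for the moreover clause. First, for each $i$ I would choose an $S_i$-boundary $B_i$ via Theorem~\ref{T:FP} (for instance a Furstenberg--Poisson boundary for a symmetric spread-out generating measure) and form $B = B_1 \times \cdots \times B_n$. By Remark~\ref{R:boundaries}(4) this is an $S$-boundary, and by Remark~\ref{R:boundaries}(3) it is also a $\Gamma$-boundary. Applying Theorem~\ref{T:char-circle} to the boundary pair $(B,B)$ yields the characteristic map $\phi : B \to S^1$, and in particular the element $\phi \circ \pr_+ \in \Map_\Gamma(B \times B, S^1)$.

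The key step is to show that $\phi$ factors through a single coordinate projection $\pr_{i_0} : B \to B_{i_0}$. The generalized Weyl group $\gW{\Gamma}{B}$ contains the elementary abelian $2$-group $W = \prod_{i=1}^n \bbZ/2\bbZ$ of factor flips (Example~\ref{ex:weyl-prod}), and the $W$-orbit of $\phi \circ \pr_+$ is a finite subset of $\Map_\Gamma(B \times B, S^1)$ on which $W$ acts preserving the canonical cyclic order from Theorem~\ref{T:char-circle}(ii). Since order-preserving bijections of a finite cyclically ordered set form a cyclic group, the $W$-action on this orbit factors through a cyclic quotient of $W$, whose image has order at most $2$. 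The stabilizer of $\phi \circ \pr_+$ in $W$ is therefore a special subgroup of index at most $2$; it cannot be all of $W$, for otherwise $\phi$ would be constant and $\rho(\Gamma)$ would fix a point of $S^1$, contradicting minimality. Hence this stabilizer equals $\prod_{i \ne i_0}\bbZ/2\bbZ$ for some $i_0 \in \{1,\dots,n\}$, and unwinding the coordinate-flip invariance yields $\phi = \bar\phi \circ \pr_{i_0}$ for some $\Gamma$-map $\bar\phi : B_{i_0} \to S^1$.

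Next I would promote $\bar\phi$ to a continuous homomorphism $\bar\rho_{i_0} : S_{i_0} \to \Homeo_+(S^1)$ via Lemma~\ref{L:extension}. The $\Gamma$-action on $B_{i_0}$ factors through $\Lambda = \pr_{i_0}(\Gamma)$, which is dense in $S_{i_0}$ by hypothesis, and the essential image of $\bar\phi$ is $\rho(\Gamma)$-invariant and hence dense in $S^1$ by minimality, so any $\gamma \in \Gamma \cap \ker(\pr_{i_0})$ must act as the identity via $\rho$; this lets $\rho$ descend to a homomorphism $\hat\rho : \Lambda \to \Homeo_+(S^1)$. I then apply Lemma~\ref{L:extension} with $T = S_{i_0}$, $X = B_{i_0}$ equipped with its quasi-invariant class, $M = S^1$, $G = \Homeo_+(S^1)$, $\phi = \bar\phi$, and $\eta = \bar\phi_*\mu \in \Prob(S^1)$. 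Condition $(*)_2$ is immediate from $\supp(\eta) = S^1$, which holds by minimality. The main technical obstacle is $(*)_1$: the cleanest route is to first show that $\eta$ is atomless, using strong proximality to rule out a finite $\rho(\Gamma)$-invariant set of atoms of maximal mass, and then to invoke the standard fact that when $\eta$ is atomless with full support, the map $g \mapsto g_*\eta$ is a homeomorphic embedding of $\Homeo_+(S^1)$ into $\Prob(S^1)$, so that weak-$*$ convergence $(g_n)_*\eta \to \eta$ forces $g_n \to e$.

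For the moreover clause, once $\bar\rho_{i_0}(S_{i_0})$ is a non-discrete closed subgroup of $\Homeo_+(S^1)$ whose action on $S^1$ inherits minimality and strong proximality from $\rho(\Gamma)$, I would invoke Ghys's classification of non-discrete closed minimal strongly proximal subgroups of $\Homeo_+(S^1)$: up to conjugation in $\Homeo_+(S^1)$, any such subgroup is contained in $\PSL_2(\bbR)$ acting by fractional linear transformations, giving the stated consequence that $\Gamma$ may be assumed to act by fractional linear transformations.
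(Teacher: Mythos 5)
Your argument follows the paper's route essentially step for step: build the product boundary $B=B_1\times\cdots\times B_n$, apply Theorem~\ref{T:char-circle} to get $\phi\in\Map_\Gamma(B,S^1)$, use the cyclic order of part (ii) to force the flip group $W=\prod_{i}\bbZ/2\bbZ\subset\gW{\Gamma}{B}$ to act through a quotient of order at most two, identify the stabilizer of $\phi\circ\pr_+$ in $W$ with $\prod_{i\ne i_0}\bbZ/2\bbZ$ (the case $J=\{1,\dots,n\}$ being ruled out by minimality), and conclude $\phi$ factors through $B_{i_0}$; then feed this into Lemma~\ref{L:extension} and finish with the classification of non-discrete minimal strongly proximal closed subgroups of $\Homeo_+(S^1)$. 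The descent of $\rho$ to $\Lambda=\pr_{i_0}(\Gamma)$ via full support of $\eta$ is handled correctly, and the verification of ${\rm (*)}_2$ from $\supp(\eta)=S^1$ is fine.

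There is, however, one false assertion in your verification of ${\rm (*)}_1$. You claim that for atomless full-support $\eta$ the map $g\mapsto g_*\eta$ is a homeomorphic embedding of $\Homeo_+(S^1)$ into $\Prob(S^1)$, so that $(g_n)_*\eta\to\eta$ forces $g_n\to e$. This map is not injective: if $h\in\Homeo_+(S^1)$ pushes Lebesgue measure to $\eta$, then the stabilizer of $\eta$ in $\Homeo_+(S^1)$ equals $h\,{\rm SO}(2)\,h^{-1}$, a whole circle group; so $(g_n)_*\eta\to\eta$ only forces $g_n$ to accumulate on that stabilizer, not to converge to $e$. Fortunately ${\rm (*)}_1$ asks only for precompactness of $\{g_n\}$, which is true and is what should be proved. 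The correct argument (and the one the paper uses in the commensurator case) is: if $g_n\to\infty$ in $\Homeo_+(S^1)$ and $(g_n)_*\eta\to\xi$, then $\xi$ must have atoms, whereas any weak-$*$ limit along a sequence with $(g_n)_*\eta\to\eta$ would have to be $\eta$ itself, which is atomless with full support. Alternatively one can parameterize atomless full-support measures by their cumulative distribution functions and show directly that $(g_n)_*\eta\to\eta$ implies $g_n^{-1}$ converges uniformly (after passing to a subsequence), hence $\{g_n\}$ is bounded. Replacing the embedding claim by either of these gives a correct proof of ${\rm (*)}_1$; together with your verification of ${\rm (*)}_2$, Lemma~\ref{L:extension} then applies and the rest of your argument is sound.
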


The addendum about $\PSL_2(\bbR)$ follows from the general fact that a lcsc group acting minimally and strongly proximally
on the circle is either discrete, or could be conjugated into $\PSL_2(\bbR)$.
Let us also remark, that under some mild assumptions
(e.g. $\Gamma$ is finitely generated and projects injectively to the $S_i$-factors)
one can prove that if $\rho(S)$ is non-discrete then, up to finite index and a compact factor,
$\Gamma<S$ is an arithmetic lattice in a finite product
of a real and possible $p$-adic algebraic groups, one of which is $\PSL_2(\bbR)$, and $\Gamma$ acts on the
circle through this factor \cite{BFS-env}.

Next consider a connected simple Lie group $S$ with ${\rm rank}_\bbR(S)\ge 2$.
Let $B=S/Q$ where $Q<S$ is a minimal parabolic. It is an $S$-boundary by Theorem~\ref{T:GmodP-bnd}.
As a measurable $S$-space, $B\times B=S/Q\times S/Q\cong S/A'$, where $A'$ is the centralizer of
a maximal split torus $A<Q$. The generalized Weyl group $\gW{S}{S/Q}$,
consisting of automorphisms of $S/A'$ as a measurable $S$-space, 
is easily seen to coincide with the classical Weyl group $\cW{S}=\calN_S(A)/\calZ_S(A)=\calN_S(A')/A'$:
\[
	\gW{S}{S/Q}\cong \cW{S}.
\]
Let $\Gamma<S$ be a lattice. Then $B=S/Q$ is also a $\Gamma$-boundary and $\gW{\Gamma}{S/Q}$ contains $\cW{S}$.
This inclusion is an isomorphism (\cite{BFGW}) and the three notions of special subgroups: of $\gW{\Gamma}{S/Q}$,
of $\gW{S}{S/Q}$, and of $\cW{S}$ seen as a Coxeter group, all coincide \cite{BF-hyp}.
Since $S$ is assumed to be simple, the Coxeter group $\cW{S}$ is irreducible.
It is not hard to see that if $W$ is an irreducible Coxeter group and $W'<W$ a proper special subgroup then the action of $W$ on the coset space $W/W'$ is faithful: $W'$ contains no nontrivial subgroup which is normal in $W$.
It follows that for any Borel $\Gamma$-space $V$ with $V^\Gamma=\emptyset$, the action of $\gW{\Gamma}{S/Q}$ 
on the orbit of $\phi\circ \pr_+\Map_\Gamma(S/Q\times S/Q,V)$ for $\phi\in\Map_\Gamma(S/Q,V)$ is faithful.
This allows to deduce the following result of Ghys \cite{Ghys-lattices}.

\begin{theorem}
	Let $\Gamma$ be a lattice in a connected simple Lie group $S$ with ${\rm rank}_\bbR(S)\ge 2$.
	Then any $\Gamma$-action on the circle has a finite orbit.
\end{theorem}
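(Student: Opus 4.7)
Suppose for contradiction that $\rho : \Gamma \to \Homeo_+(S^1)$ is an orientation preserving action with no finite orbit (passing to an index-two subgroup if necessary). I would use the dichotomy recalled before Theorem~\ref{T:char-circle}---collapsing the complementary arcs of the unique minimal set, and then, if needed, quotienting by the finite cyclic centralizer of the resulting minimal action---to obtain a semi-conjugate action $\bar\rho$ on $S^1$ that is either (a) conjugate into $\mathrm{SO}(2)$, or (b) minimal and strongly proximal. In either case the absence of finite orbits is inherited (minimal actions on infinite spaces automatically have no finite orbit, so in particular no fixed point).

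Case (a) is routine: $\bar\rho(\Gamma)$ is abelian and, by minimality, dense in $\mathrm{SO}(2)$, hence infinite, so $\bar\rho$ factors through an infinite quotient of $\Gamma^{\mathrm{ab}}$. But any lattice in a higher-rank simple Lie group has finite abelianization (by Kazhdan's property (T) combined with the Delorme--Guichardet theorem, giving $H^1(\Gamma,\bbR)=0$, together with finite generation), a contradiction.

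For case (b) I take $B=S/Q$ with $Q<S$ a minimal parabolic; this is a $\Gamma$-boundary by Theorem~\ref{T:GmodP-bnd}. Applying Theorem~\ref{T:char-circle} furnishes $\Gamma$-maps $\phi_\pm:B\to S^1$ and endows $\Map_\Gamma(B\times B,S^1)$ with a cyclic order preserved by the generalized Weyl group $\gW{\Gamma}{B}\cong\cW{S}$. The induced $\cW{S}$-action on the finite orbit $\cW{S}\cdot(\phi_+\circ\pr_+)$ preserves the restricted cyclic order and therefore factors through a cyclic quotient. I would then argue that this action is in fact \emph{faithful}: its kernel is a special subgroup $W'<\cW{S}$, and it is \emph{proper}, since otherwise $\wflip\in W'$ would force $\phi_+\circ\pr_+=\phi_+\circ\pr_-$, i.e.\ $\phi_+$ essentially constant, yielding a $\rho(\Gamma)$-fixed point in $S^1$---impossible under (b). Since $S$ is simple, $\cW{S}$ is an irreducible Coxeter group, and the principle quoted just before the theorem gives that $\cW{S}$ acts faithfully on any coset space by a proper special subgroup. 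Hence $\cW{S}$ injects into a cyclic group. But for $\mathrm{rank}_\bbR(S)\ge 2$ the Weyl group $\cW{S}$ is a finite irreducible Coxeter group of rank $\ge 2$ and so non-cyclic (it contains $W(A_2)\cong S_3$ or a non-abelian dihedral subgroup $W(B_2),W(G_2),\ldots$), the desired contradiction.

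The main obstacle will be the bookkeeping in case (b): verifying cleanly that the $\cW{S}$-stabilizer of $\phi_+\circ\pr_+$ is a proper special subgroup (this is where the no-fixed-point assumption and the flip $\wflip$ enter), and pinning down that preservation of a cyclic order by $\cW{S}$ really does force the induced action on the finite orbit to factor through a cyclic group. Once those two points are in place, the collision between faithfulness (special-subgroup property in irreducible Coxeter groups) and cyclicity (Theorem~\ref{T:char-circle}(ii)) closes the argument, while case (a) reduces to the classical finiteness of $\Gamma^{\mathrm{ab}}$ for higher-rank lattices.
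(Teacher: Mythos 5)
Your proposal follows essentially the same route as the paper's own proof: reduce via semi-conjugation to a minimal action, rule out rotations using finiteness of $\Gamma^{\mathrm{ab}}$, pass to the minimal strongly proximal case, take $B=S/Q$, and then pit the cyclicity forced by Theorem~\ref{T:char-circle}(ii) against the faithfulness of the $\cW{S}$-action on the orbit of $\phi_+\circ\pr_+$ (using irreducibility of $\cW{S}$ and properness of the special subgroup stabilizer, which you correctly justify via $\wflip$ and the no-fixed-point assumption). You merely make explicit a couple of steps the paper states as a general principle in the preceding paragraph; the argument is correct and matches the paper's.
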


Indeed, assuming $\Gamma$ has an action on the circle with only infinite orbits, one could find a minimal such action by applying a semiconjugation.
Since $\Gamma$ cannot act minimally by rotations (because $\Gamma/[\Gamma,\Gamma]$ is finite), it would also have
a minimal strongly proximal action.
Theorem~\ref{T:char-circle}(ii) then guarantees that the action of $\cW{S}\simeq \gW{\Gamma}{S/Q}$
on $\Map_\Gamma(S/Q\times S/Q,S^1)$ factors through a cyclic quotient, contradicting its faithfulness
because $\cW{G}$ is not cyclic for a higher rank $G$.

Similarly, we have the following result that might be seen as a generalization of the special case of
Margulis superrigidity stating that any homomorphism from a higher rank lattice $\Gamma$
into a rank one group has precompact image.

\begin{theorem}\label{T:higher2hyp}
	Let $\Gamma$ be a lattice in a connected simple Lie group $S$ with ${\rm rank}_\bbR(S)\ge 2$.
	Then for any homomorphism $\rho:\Gamma\to G$ where $G\acts M$ is a non-trivial convergence action,
	$\rho(\Gamma)$ is elementary or precompact in $G$.
\end{theorem}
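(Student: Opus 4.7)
The plan is to mimic the Weyl-group strategy used in the preceding proof of Ghys's theorem on circle actions, substituting Theorem~\ref{T:char-conv}(ii) for Theorem~\ref{T:char-circle}(ii). Suppose, for contradiction, that $\rho(\Gamma)$ is neither precompact nor elementary in $G$. By Remark~\ref{R:conv-min}, I would first replace $G$ by $\overline{\rho(\Gamma)}$ and $M$ by $L(\overline{\rho(\Gamma)})$, so that $\rho(\Gamma)$ is dense in $G$, $G\acts M$ is a minimal convergence action with $\mathrm{card}(M)>2$, and $G$ is non-compact. The non-elementarity of $\rho(\Gamma)$ together with minimality of $G\acts M$ rules out a $\Gamma$-fixed point in $M$, so $M^\Gamma=\emptyset$.

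Next, I would take $B=S/Q$ for a minimal parabolic $Q<S$; this is a $\Gamma$-boundary by Theorem~\ref{T:GmodP-bnd}. Applying Theorem~\ref{T:char-conv} to the boundary pair $(B,B)$ produces $\Gamma$-maps $\phi_\pm:B\to M$ with essentially distinct values; crucially, part (iii) yields the rigid description
\[
\Map_\Gamma(B\times B,M)=\{\phi_-\circ\pr_-,\ \phi_+\circ\pr_+\},
\]
so this set has cardinality at most $2$.

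The last step uses the generalized Weyl group $\gW{\Gamma}{B}$, which acts on $\Map_\Gamma(B\times B,M)$ by precomposition. Under the identification $\gW{\Gamma}{S/Q}\cong\cW{S}$ recalled just before Ghys's theorem, the image of this action in the permutation group of the orbit of $\phi_+\circ\pr_+$ has order at most $2$. On the other hand, since $M^\Gamma=\emptyset$ and $\cW{S}$ is an irreducible Coxeter group (because $S$ is simple), the general principle invoked in the proof of Ghys's theorem guarantees that this orbit action is faithful: the stabilizer is a proper special subgroup, and no proper special subgroup of an irreducible Coxeter group contains a nontrivial normal subgroup. Hence $\cW{S}$ embeds into $\mathrm{Sym}(2)\cong\bbZ/2\bbZ$, which contradicts $\mathrm{rank}_\bbR(S)\ge 2$, since then $\cW{S}$ is an irreducible Coxeter group of rank at least $2$ and has order at least $4$.

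The main obstacle I anticipate is not in the overall structure, which runs parallel to the proof of Ghys's theorem, but in verifying the faithfulness of the $\cW{S}$-action on the orbit of $\phi_+\circ\pr_+$. That step is where the hypothesis $M^\Gamma=\emptyset$ (i.e.\ non-elementarity of $\rho(\Gamma)$) enters in an essential way and where one must appeal to the classification of special subgroups in $\cW{S}$ together with the identifications from \cite{BFGW} and \cite{BF-hyp}.
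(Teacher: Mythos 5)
Your proof is correct and matches what the paper intends: the "Similarly" preceding the statement signals that one runs the same Weyl-group argument used for Ghys's circle theorem, replacing Theorem~\ref{T:char-circle}(ii) by Theorem~\ref{T:char-conv}(ii) so that $\Map_\Gamma(B\times B,M)$ has at most two elements, and then the faithfulness of the $\cW{S}$-action (via the irreducibility of $\cW{S}$ and $M^\Gamma=\emptyset$) forces $\cW{S}$ to embed in $\bbZ/2\bbZ$, contradicting ${\rm rank}_\bbR(S)\ge 2$. The only cosmetic remark is that an irreducible rank-$\ge 2$ Coxeter group in fact has order at least $6$ (the dihedral groups $I_2(m)$ with $m\ge 3$), not merely $4$, but this does not affect the argument.
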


In \cite{BF-hyp} the basic idea of the last result is developed further for a class of target groups that includes
mapping class groups, automorphism groups of finite dimensional CAT(0) cubical complexes etc.

Taking the target group $G$ to be a connected simple real Lie group one can use Theorem~\ref{T:char-alg}
to obtain the following.

\begin{theorem}
Let $\Gamma$ be a lcsc group, $B$ a $\Gamma$-boundary, $G$ a connected, non-compact, simple real Lie group,
and $\rho:\Gamma\to G$ a homomorphism with Zariski dense image.
Then there exists a homomorphism
\[
	\pi:\gW{\Gamma}{B}\ \overto{}\ \cW{G},\qquad \pi(\wflip)=\wlong,
\]
satisfying that the preimage of a special subgroup of $\cW{G}$ is a special subgroup of $\gW{\Gamma}{B}$.
Furthermore, there is
a map $\phi_{\bowtie}\in\Map_\Gamma(B\times B, G/A')$ satisfying
\[
	\phi_{\bowtie}\circ w=\pi(w)\circ \phi_{\bowtie}\qquad (w\in \gW{\Gamma}{B}),
\]
and $\phi\in\Map_\Gamma(B,G/P)$ such that $\phi(x)=\pr_1\circ \phi_{\bowtie}(x,y)$,
$\phi(y)=\pr_2\circ \phi_{\bowtie}(x,y)$.
\end{theorem}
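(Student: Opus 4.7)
The plan is to specialize Theorem~\ref{T:char-alg} to the case $B_-=B_+=B$ and define $\pi$ by tracking how precomposition with elements of $\gW{\Gamma}{B}$ permutes the (finite, classified) set $\Map_\Gamma(B\times B,G/A')$. First, applying Theorem~\ref{T:char-alg}(i) shows $\phi_-=\phi_+$, since each yields the unique element $\delta\circ\phi$ of $\Map_\Gamma(B,\Prob(G/P))$. Writing $\phi$ for this common map, part (iii) produces $\phi_{\bowtie}\in\Map_\Gamma(B\times B,G/A')$ with $\pr_1\circ\phi_{\bowtie}(x,y)=\phi(x)$, $\pr_2\circ\phi_{\bowtie}(x,y)=\phi(y)$, and the enumeration $\Map_\Gamma(B\times B,G/A')=\{w\circ\phi_{\bowtie}\mid w\in\cW{G}\}$. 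A key preliminary observation is that the essential image of $\phi_{\bowtie}$ is Zariski dense in $G/A'$: its Zariski closure is $\rho(\Gamma)$-invariant, hence $G$-invariant by Zariski density of $\rho(\Gamma)$, and therefore equals $G/A'$ by $G$-homogeneity.

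For each $w\in\gW{\Gamma}{B}=\Aut_\Gamma(B\times B)$, the map $\phi_{\bowtie}\circ w$ still lies in $\Map_\Gamma(B\times B,G/A')$, so the enumeration supplies $\pi(w)\in\cW{G}$ with $\phi_{\bowtie}\circ w=\pi(w)\circ\phi_{\bowtie}$. This $\pi(w)$ is unique because $\cW{G}=\calN_G(A')/A'$ acts freely and algebraically on $G/A'$ by right multiplication, so two Weyl elements agreeing on the Zariski-dense essential image of $\phi_{\bowtie}$ agree everywhere and must coincide. The map $\pi$ is a homomorphism by associativity,
\[
  \phi_{\bowtie}\circ(w_1w_2)=(\phi_{\bowtie}\circ w_1)\circ w_2=\pi(w_1)\circ\phi_{\bowtie}\circ w_2=\pi(w_1)\pi(w_2)\circ\phi_{\bowtie}.
\]
The identification $\pi(\wflip)=\wlong$ follows from $\phi_{\bowtie}\circ\wflip(x,y)=\phi_{\bowtie}(y,x)$, whose first and second coordinates are $\phi(y)$ and $\phi(x)$: thus $\pi(\wflip)$ swaps $\pr_1$ and $\pr_2$, which characterizes $\wlong$ via the identity $\pr_2=\pr_1\circ\wlong$. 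The equivariance relation $\phi_{\bowtie}\circ w=\pi(w)\circ\phi_{\bowtie}$ is then the defining property, and the formulas $\phi(x)=\pr_1\circ\phi_{\bowtie}(x,y)$, $\phi(y)=\pr_2\circ\phi_{\bowtie}(x,y)$ are part of Theorem~\ref{T:char-alg}.

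For the special-subgroups claim, let $W'\leq\cW{G}$ be special, say $W'=\Stab_{\cW{G}}(\psi\circ\pr_+)$ for some $G$-map $\psi:G/P\to V$ into a $G$-variety $V$. Set $\tilde\psi=\psi\circ\phi\in\Map_\Gamma(B,V)$; the associated special subgroup in $\gW{\Gamma}{B}$ is $W=\Stab_{\gW{\Gamma}{B}}(\tilde\psi\circ\pr_+)$. For $w\in\gW{\Gamma}{B}$, the intertwining identity gives
\[
  \tilde\psi\circ\pr_+\circ w=\psi\circ\pr_+\circ\phi_{\bowtie}\circ w=(\psi\circ\pr_+\circ\pi(w))\circ\phi_{\bowtie},
\]
and by Zariski density of the image of $\phi_{\bowtie}$ this equals $\tilde\psi\circ\pr_+=\psi\circ\pr_+\circ\phi_{\bowtie}$ if and only if $\psi\circ\pr_+\circ\pi(w)=\psi\circ\pr_+$ on $G/A'$, i.e.\ $\pi(w)\in W'$. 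Hence $W=\pi^{-1}(W')$, establishing the special-subgroup correspondence.

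The main obstacle is the uniqueness of $\pi(w)$ and, more generally, transferring identities between measurable $\Gamma$-maps defined on $B\times B$ and algebraic identities on $G/A'$; both steps rest on Zariski density of the essential image of $\phi_{\bowtie}$, which is where the Zariski-density hypothesis on $\rho(\Gamma)$ enters critically. Everything else is a direct unfolding of Theorem~\ref{T:char-alg}.
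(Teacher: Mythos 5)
Your proof is correct and follows exactly the route the paper indicates (the paper states this theorem without an explicit proof, but introduces it with "one can use Theorem~\ref{T:char-alg} to obtain the following," which is precisely what you unfold). Specializing to $B_-=B_+=B$, reading $\pi$ off the enumeration $\Map_\Gamma(B\times B,G/A')=\{w\circ\phi_{\bowtie}\mid w\in\cW{G}\}$, and transferring identities via Zariski density of the essential image of $\phi_{\bowtie}$ is the natural and intended argument. One small remark: for the well-definedness of $\pi(w)$ (step where you show $w_1\circ\phi_{\bowtie}=w_2\circ\phi_{\bowtie}$ a.e.\ forces $w_1=w_2$), freeness of the right $\cW{G}$-action on $G/A'$ already suffices at a single point of the essential image, so Zariski density is not needed there; where Zariski density is genuinely essential is the special-subgroup computation, where you compare $\psi\circ\pr_+\circ\pi(w)$ with $\psi\circ\pr_+$ as algebraic maps on $G/A'$. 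You also implicitly use the fact (cited in the paper from \cite{BF-hyp}) that every special subgroup of $\cW{G}$ arises from an algebraic $G$-map $\psi:G/P\to V$ with $V$ a $G$-variety, which is what licenses restricting to the algebraic case; it would be worth flagging this reduction explicitly.
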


We remark that there is a natural notion of a preorder relation on $\gW{\Gamma}{B}$, generalizing the classical Bruhat order on Coxeter groups,
and one can show that the map $\pi$ considered here is order preserving.
We will not elaborate on this here.

Note that the theorem above could be applied in particular to a lattice $\Gamma$ in a simple Lie group $S$.
Then one deduces some
cases of Margulis superrigidity, e.g. it follows that a lattice in $\SL_n(\bbR)$ cannot
have an unbounded representation in $\SL_m(\bbR)$ if $n>m$.
However, a more efficient approach to superrigidity phenomena with algebraic targets, one that
avoids boundary theory almost completely, is proposed in \cite{BF-csr}.

\subsection*{Commensurator superrigidity}

Finally, let us show how existence and uniqueness of characteristic maps
can be used to prove results analogous to Margulis' commensurator superrigidity \cite{Margulis-book}
(see also \cite{Burger-ACampo}).
Let $S$ be a lcsc group, and $\Gamma<S$ be a lattice. %Denote $\Gamma^s=s\Gamma s^{-1}$.
Recall that the commensurator of $\Gamma$ in $S$ is the subgroup of $S$ given by
\[
	\Commen_S(\Gamma)=\left\{ s\in S \mid \Gamma\cap \Gamma^s\ \textrm{has\ finite\ index\ in\ }
	\Gamma,\textrm{\ and\ in\ }\Gamma^s\right\},
\]
where $\Gamma^s=\{ g^s=sgs^{-1} \mid g\in\Gamma\}$ denotes conjugation.
\begin{theorem}\label{T:commen}\hfill{}
	Let $S$ be a lcsc group, $\Gamma<S$ a lattice,
	$\Lambda$ a dense subgroup in $S$ such that $\Gamma<\Lambda<\Commen_S(\Gamma)$.
	\begin{itemize}
		\item[{\rm (i)}]
		Let $G\acts M$ be a minimal convergence action and $\rho:\Lambda\to G$ be a continuous homomorphism with a dense image.
		Then $\rho$ extends to a continuous homomorphism $\bar{\rho}:S\to G$.
		\item[{\rm (ii)}]
		Let $\rho:\Lambda\to\Homeo_+(S^1)$ be such that $\Lambda\acts S^1$ acts minimally and strongly proximally.
		Then $\rho$ extends to a continuous homomorphism $\bar{\rho}:S\to \Homeo_+(S^1)$,
		whose image is either discrete or is conjugate to $\PSL_2(\bbR)<\Homeo_+(S^1)$.
		\item[{\rm (iii)}]
		Let $G$ be a connected, simple, center-free, non-compact, real Lie group, and let
		$\rho:\Lambda\to G$ be a homomorphism with Zariski dense image.
		Then $\rho$ extends to a continuous homomorphism $\bar{\rho}:S\to G$.
	\end{itemize}
\end{theorem}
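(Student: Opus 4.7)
The plan is to run a unified strategy across the three cases: restrict $\rho$ to $\Gamma$, use the relevant characteristic-map theorem from the previous section to produce a $\Gamma$-equivariant boundary map $\phi$, exploit its uniqueness together with the commensuration of $\Gamma$ by $\Lambda$ to promote the $\Gamma$-equivariance of $\phi$ to $\Lambda$-equivariance, and finally extend $\rho$ across $S$ by Lemma~\ref{L:extension}.

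Fix an $S$-boundary $B$, provided by Theorem~\ref{T:FP}. Since $\Gamma$ is a lattice in $S$, Remark~\ref{R:boundaries}(3) makes $B$ into a $\Gamma$-boundary. Crucially, for every $s\in\Lambda\subset\Commen_S(\Gamma)$ the subgroup $\Gamma_s\defq\Gamma\cap s\Gamma s^{-1}$ has finite index in $\Gamma$, hence is itself a lattice in $S$, so $B$ is simultaneously a $\Gamma_s$-boundary. Applying to $\rho|_\Gamma$ Theorem~\ref{T:char-conv} in case (i), Theorem~\ref{T:char-circle} in case (ii), and Theorem~\ref{T:char-alg} in case (iii) produces a canonical $\Gamma$-map
\[
    \phi:B\longrightarrow V,\qquad V\in\{M,\,S^1,\,G/P\},
\]
characterized by a sharp uniqueness property --- for instance $\Map_\Gamma(B,\Prob(V))=\{\delta\circ\phi\}$ in (i) and (ii).

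Now fix $s\in\Lambda$ and set $(s\cdot\phi)(x)\defq\rho(s)\,\phi(s^{-1}x)$. The identity $s^{-1}\gamma s\in\Gamma$ valid for $\gamma\in\Gamma_s$ shows that $s\cdot\phi\in\Map_{\Gamma_s}(B,V)$. The very same uniqueness clause that singled out $\phi$ among $\Gamma$-maps applies verbatim at the level of the lattice $\Gamma_s$, forcing $s\cdot\phi=\phi$, i.e.\ $\phi(sx)=\rho(s)\phi(x)$ for a.e.\ $x\in B$. Hence $\phi$ is $\Lambda$-equivariant. Feeding $\phi$ and the dense inclusion $\Lambda<S$ into Lemma~\ref{L:extension} and verifying $(*)_1$, $(*)_2$ exactly as in the proof of Theorem~\ref{T:conv-prod} --- using minimality of the target action to get $\supp(\phi_*\nu)=V$, and the convergence property (respectively the collapse of weak-$*$ limits of translates onto small subsets for minimal strongly proximal circle actions and for boundary actions of semisimple groups on flag varieties) for the other condition --- yields the desired continuous extension $\bar\rho:S\to G$. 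The addendum in (ii) regarding $\PSL_2(\bbR)$ is then obtained from the same classification of lcsc groups acting minimally and strongly proximally on $S^1$ that was used after Theorem~\ref{T:circle-prod}.

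The main obstacle is to make sure that the uniqueness clauses of Theorems~\ref{T:char-conv}, \ref{T:char-circle}, and \ref{T:char-alg} can indeed be applied to $\rho|_{\Gamma_s}$, not merely to $\rho|_\Gamma$. Non-precompactness of the image passes trivially to finite-index subgroups, but non-elementarity in case (i) and Zariski density in case (iii) do not a priori. Both obstacles are remedied by a short commensuration argument: a $\rho(\Gamma_s)$-invariant pair of points in $M$ (respectively a proper algebraic subvariety of $G$ containing $\rho(\Gamma_s)$) would be moved by coset representatives of $\Lambda/\Gamma_s$ into a $\rho(\Lambda)$-invariant finite union of the same type, contradicting density (respectively Zariski density) of $\rho(\Lambda)$ in $G$ together with the non-triviality of $G\acts V$. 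Once these short case-specific verifications are carried out, the three-step procedure above closes the proof.
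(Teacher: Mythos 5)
Your proposal follows the paper's proof essentially line for line: fix an $S$-boundary $B$ via Theorem~\ref{T:FP}, obtain the canonical $\Gamma$-map $\phi:B\to Q$ from the relevant characteristic-map theorem, use its uniqueness at the level of the finite-index lattice $\Gamma\cap s\Gamma s^{-1}$ to upgrade $\Gamma$-equivariance of $\phi$ to $\Lambda$-equivariance, and conclude via Lemma~\ref{L:extension}. The only small wrinkle is in your final paragraph: $\Lambda/\Gamma_s$ need not be finite, so the ``finite union'' argument as stated does not go through — the paper handles this more cleanly by first showing (using $\rho(\Lambda)$-invariance of $L(\rho(\Gamma))$, resp.\ of the identity component of the Zariski closure of $\rho(\Gamma)$) that $\rho(\Gamma)$ already satisfies the non-degeneracy hypotheses, after which they pass automatically to any finite-index subgroup of $\Gamma$.
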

In view of Remark~\ref{R:conv-min}, the assumption of density of $\rho(\Lambda)$ in $G$
and minimality of $G\acts M$ is not restrictive.
Minimality and strong proximality can also be assumed for the circle case,
see the discussion preceding Theorem~\ref{T:char-circle}.
Of course, case (iii) is a special case of the original Margulis commensurator superrigidity,
that was used to give a criterion for arithmeticity of lattices in semi-simple Lie groups.
To this end one needs to consider also algebraic target groups over  $\bbC$, and over
$\bbQ_p$ where $p$ is a prime (cf. \cite{Burger-ACampo}).
We include it here just to emphasize the analogy with the other cases.

\begin{proof}[Sketch of the proof]
	The non-degeneracy assumptions on $\rho(\Lambda)$ are already satisfied by $\rho(\Gamma)$.
	For example in (i), the set $L(\rho(\Gamma))$ being stable under replacing $\Gamma$ by finite index subgroups
	is necessarily $\rho(\Lambda)$-invariant. As $\rho(\Lambda)$ is dense in $G$ and $G\acts M$ is minimal, it
	follows that $L(\rho(\Gamma))=M$.
	Similarly in (ii), one shows that already $\Gamma$ acts minimally and strongly proximally on $S^1$.
	In (iii) $\rho(\Gamma)$ is Zariski dense, because the identity component of its Zariski closure is normalized by $\rho(\Lambda)$
	and hence by all of $G$, but the latter is simple.

	Choose an $S$-boundary $B$, say using Theorem~\ref{T:FP}.
	Then $B$ is a boundary for $\Gamma$, and for any finite index subgroup $\Gamma'<\Gamma$ (Remark~\ref{R:boundaries}).
	Consider the compact $G$-space $Q$ where: in (i) $Q=M$, in (ii) $Q=S^1$, in (iii) $Q=G/P$.
	Then using the above properties of $\rho(\Gamma)$ we obtain a characteristic $\Gamma$-map
	\[
		\phi\in\Map_\Gamma(B,Q)
	\]
	by applying just the first claim in Theorems~\ref{T:char-conv}, ~\ref{T:char-circle},~\ref{T:char-alg} in cases (i), (ii), (iii),
	respectively.
	For any fixed $\lambda\in\Lambda$ the group $\Gamma'=\Gamma\cap \Gamma^{\lambda^{-1}}$ has finite index in $\Gamma$,
	and therefore is also a lattice in $S$.
	The measurable map $\psi:B\to Q$ defined by
	$\psi(x)=\rho(\lambda)^{-1}\phi(\lambda.x)$ is $\Gamma'$-equivariant, because for $g\in\Gamma'$ one has $g^\lambda\in\Gamma$, and so
	\[
		\psi(g.x)=\rho(\lambda)^{-1}\phi(\lambda g.x)=\rho(\lambda)^{-1}\phi(g^\lambda\lambda.x)=\rho(\lambda^{-1} g^\lambda)\phi(\lambda.x)=\rho(g)\psi(x).
	\]
	So both $\phi$ and $\psi$ are in $\Map_{\Gamma'}(B,Q)$, which means that $\phi=\psi$.
	Thus 
	\[
		\phi (\lambda.x)=\rho(\lambda).\phi(x)
	\]
	for a.e. $x\in B$, and this holds for every $\lambda\in\Lambda$.
	Hence the map $\phi:B\to Q$ is $\Lambda$-equivariant.
	This allows to show that $\rho:\Lambda\to G$ extends to a continuous $\bar{\rho}:S\to G$ using, for example,
	the extension Lemma~\ref{L:extension}, once conditions ${\rm (*)}_1$, ${\rm (*)}_2$ have been verified.
	For convergence groups this was done in the proof of Theorem~\ref{T:conv-prod}.
	
	Similar arguments apply to the case (ii) of the circle. One shows using minimality and strong proximality of $\Gamma$-action on $S^1$
	that $\eta=\phi_*\nu$ is a full support continuous measure.
	It easy to see that if $(g_i)_*\eta\to \xi$ for some $g_i\to\infty$ in $\Homeo_+(S^1)$ then $\xi$ has atoms and cannot be $\eta$.
	This proves ${\rm (*)}_1$, while ${\rm (*)}_2$ follows from the fact that $\eta$ has full support.
	
	In case (iii) of $G\acts G/P$, the measure $\eta$ is \emph{proper} meaning that $\eta(V)=0$ for any proper algebraic subspace $V\subset G/P$.
	Then ${\rm (*)}_1$ follows from Furstenberg's lemma about quasi-projective-transformations (\cite{Furst-BD})
	and ${\rm (*)}_2$ is a consequence
	of properness of $\eta$.

	Therefore $\rho:\Lambda\to G$ extends to a continuous homomorphism $\bar{\rho}:S\to G$ in all three cases.
	In the circle case (ii), there is an additional fact: a lcsc group, e.g. $\bar\rho(S)$, with a minimal and strongly
	proximal action on the circle is either discrete or could be conjugated into $\PSL_2(\bbR)$.
\end{proof}

% section rigidity (end)

\section{An application to Lyapunov exponents}

In this section we shall apply boundary theory -- Theorem~\ref{T:char-alg} -- to obtain results
about Lyapunov exponents for some matrix valued functions on a class of p.m.p. systems.
In his first proof of superrigidity Margulis used non-vanishing of Lyapunov exponents
for certain matrix valued functions to construct
characteristic maps; our approach (\cite{BF-Lya}) follows a converse direction.

The Multiplicative Ergodic Theorem of Oseledets \cite{Oseled} (see also Kaimanovich \cite{Kaim-MET}, Karlsson-Margulis \cite{KM})
describes the asymptotic behavior of products of matrix valued functions along orbits of a p.m.p. system.
More precisely, let $(X,m,T)$ be an invertible, ergodic, p.m.p. system, and $F:X\to \SL_d(\bbR)$ a measurable map with
\begin{equation}\label{e:L1}
	\int_X \log\|F(x)\|\,dm(x)<+\infty.
\end{equation}
Multiplying $F$ along $T$-orbits one obtains a measurable cocycle $\bbZ\times X\to \SL_d(\bbR)$
\begin{equation}\label{e:cocycle}
	\begin{split}
	F_n(x)=\left\{\begin{array}{lll}
	F(T^{n-1}x)\cdots F(Tx)F(x) & \textrm{if} & n\ge 1,\\
	I & \textrm{if} & n=0,\\
	F(T^nx)^{-1}\cdots F(T^{-1}x)^{-1} & \textrm{if} & n<0.
	\end{array}\right.
	\end{split}
\end{equation}
The cocycle equation being $F_{k+n}(x)=F_k(T^n x)F_n(x)$ for $k,n\in\bbZ$.
The Multiplicative Ergodic Theorem asserts that there exist: a partition $d=d_1+\dots+d_s$, constants
\[
	\gamma_1>\gamma_2>\dots>\gamma_s\qquad \textrm{with}\qquad d_1\gamma_1+\dots+d_s\gamma_s=0
\]
and a measurable equivariant\footnote{satisfying $m$-a.e. $L_j(T^n x)=F_n(x) L_j(x)$ for $1\le j\le s$.} splitting into vector subspaces
\[
	\bbR^d =L_1(x)\oplus L_2(x)\oplus\dots\oplus L_s(x),\qquad \dim L_j(x)=d_j,
\]
so that for $m$-a.e. $x\in X$ for every $v\in L_j(x)\setminus\{0\}$ one has
\[
	\lim_{n\to\infty} \frac{1}{n}\log\|F_n(x)v\|=\gamma_j,\qquad \lim_{n\to\infty} \frac{1}{n}\log\|F_{-n}(x)v\|=-\gamma_j.
\]
Rewriting $\gamma_1>\gamma_2>\dots>\gamma_s$ with their multiplicities we get the \emph{Lyapunov exponents}
\[
	\lambda_1\ge\lambda_2\ge \dots\ge \lambda_d,\qquad \lambda_1+\dots+\lambda_d=0,
\]
that can be recorded as $\Lambda={\rm diag}(\lambda_1,\dots,\lambda_d)$; we refer to
this element $\Lambda\in \frak{sl}_d(\bbR)$
as the \emph{Lyapunov spectrum} of $F$ on $(X,m,T)$.
The Multiplicative Ergodic Theorem can be restated as the assertion that a.e.
sequence $\{F_n(x)\}_{n\in\bbZ}$ follows with a sub-linear deviation
the sequence $\{U_x \exp(n\Lambda) V^{-1}_x\}_{n\in\bbZ}$ for some $U_x,V_x\in {\rm O}_d(\bbR)$.
More generally, given a simple real Lie group $G$ and an integrable\footnote{
(\ref{e:L1}) holds for some/any embedding $G<\SL_d(\bbR)$}
measurable map $F:X\to G$, we define the \emph{Lyapunov spectrum} to be the element in the positive Weyl chamber
\[
	\Lambda \in \frak{a}_+
\]
of the Cartan subalgebra $\frak{a}$ of $\frak{g}={\rm Lie}(G)$, so that $u_x\exp(n\Lambda)v_x^{-1}$
represent the asymptotic behavior of $m$-a.e. sequences $F_n(x)\in G$, $n\in\bbZ$.

The spectrum $\Lambda$ of an integrable $F:X\to G$ over an ergodic invertible system $(X,m,T)$
is \emph{non-degenerate} if $\Lambda\ne 0$,
and is called \emph{simple} if $\Lambda$ is a regular element in $\frak{a}_+$.
In the basic case $G=\SL_d(\bbR)$, non-degeneracy of the spectrum corresponds to $\lambda_1>0$,
and simplicity to the strict inequalities
\[
	\lambda_1>\lambda_2>\dots>\lambda_d
\]
in which case $s=d$ and $d_j=\dim L_j(x)=1$ for all $1\le j\le d$.

In general, there is no explicit formula for $\Lambda$ (or even $\lambda_1$) in terms of $F:X\to G$ on $(X,m,T)$,
and the dependence of the Lyapunov exponents on $F$ and $(X,m,T)$ is mostly mysterious.
The best studied situation is that of Random Walks, where $(X,m)$ is the invertible Bernoulli shift
$(G^\bbZ,\mu^\bbZ)$ with $(Tx)_i=x_{i+1}$ and $F(x)=x_1$.
From the fundamental work of Furstenberg \cite{Furst-Poisson}, Guivarc'h-Raugi \cite{GR, GR2}, and Gol'dsheid-Margulis \cite{GM},
it is known that if $\supp(\mu)$ generates a Zariski dense subgroup in $G$, then the Lyapunov spectrum is simple.
More recently, Avila and Viana \cite{AV, AV2, AV3} gave sufficient conditions for simplicity of the Lyapunov
for certain classes of systems that allowed them to prove simplicity of the Lyapunov spectrum of Kontsevich-Zorich cocycle.
Here we shall describe an approach (\cite{BF-Lya}) that allows to prove simplicity of the Lyapunov spectrum
using boundary theory.

\subsection*{Simplicity of the Lyapunov spectrum}

Let $(X,m,T)$ be an ergodic, invertible, p.m.p. system and $\Gamma$ be some auxiliary group,
that we assume to be countable discrete for clarity of presentation, and let
\[
	f:X\to \Gamma
\]
be a measurable map. It generates a measurable cocycle $\bbZ\times X\to\Gamma$, denoted $f_n(x)$, similarly to (\ref{e:cocycle}).
Let $\wt{m}$ denote the (infinite) measure on the space $\Gamma^\bbZ$ obtained by pushing forward
the product $m\times c_\Gamma$ of $m$ with the counting measure $c_\Gamma$ on $\Gamma$ by the map
\[
	 X\times \Gamma\ \overto{}\ \Gamma^\bbZ,\qquad (x,g)\mapsto (f_n(x)g^{-1})_{n\in\bbZ}.
\]
This measure describes the distribution of paths $(g_i)$ of a stochastic walk with (not necessarily independent)
increments $f(T^nx)=g_{i+1}g_i^{-1}$ that starts from an arbitrary initial value $g_0$.
Let us write $\wt{X}$ for the space $\Gamma^\bbZ$ with the measure $\wt{m}$ or another measure in its measure class.
The measure $\wt{m}$ (and its class) are preserved by the commuting actions of $\bbZ$ and $\Gamma$:
\[
	n:(g_i)\mapsto (g_{i+n}),\qquad g:(g_i)\mapsto (g_ig^{-1})\qquad (n\in\bbZ,\ g\in\Gamma).
\]
%
% There is an (ergodic) measure preserving action of $\bbZ\times \Gamma$ on $(\wt{X},\wt{m})$ given by
% \[
% 	(k,g):(g_i)_{i\in\bbZ}\mapsto (g_{i+n}g^{-1})_{i\in\bbZ}.
% \]
% This action is compatible with the $\bbZ\times \Gamma$-action on $X\times \Gamma$ given by
% \[
% 	(n,\gamma):(x,\gamma')\mapsto (T^nx,f_n(x)\gamma'\gamma^{-1}).
% \]
Consider the future tail equivalence relation $\sim_+$ on $\wt{X}$
defined by $(g_i)\sim_+(g'_i)$ if for some $k\in\bbZ$ one has $g_{i+k}=g'_i$ for all $i\ge i_0$.
Let $B_+=\wt{X}\ec\sim_+$  denote the space of $\sim_+$-ergodic components.
To make this more precise, one may replace $\wt{m}$ by an equivalent probability measure $\wt{m}_1$,
push down $\wt{m}_1$ by the projection $\Gamma^\bbZ\to \Gamma^\bbN$, and take
the ergodic components for the semi-group $\bbN$ acting by the shift.
Then $B_+$ is a measurable $\Gamma$-space which is a quotient of $\wt{X}$.
% The $\Gamma$-action on $\Gamma^\bbZ$, $g:(g_i)\mapsto (g_ig^{-1})$, preserves the measure class of $\wt{m}_1$
% and descends to $B_+$.
Similarly, one defines the past tail equivalence relation $\sim_-$ and the corresponding $\Gamma$-quotient $\wt{X}\to B_-$.
We shall say that the quotients $p_-:\wt{X}\to B_-$ and $p_+:\wt{X}\to B_+$ are \emph{weakly independent}, denoted
$B_-\perp B_+$, if
\begin{equation}\label{e:wind}
	(p_-\times p_+)[\wt{m}]\,=\, p_-[\wt{m}]\times p_+[\wt{m}],
\end{equation}
where $[\wt{m}]$ denote the measure class of $\wt{m}$, and the equality is of measure classes.
% \[
% 	\xymatrix{
% 		& (\wt{X},\wt{m}_1) \ar[ld]_{p_-} \ar[rd]^{p_+} & \\
% 	(B_-,\beta_-) & & (B_+,\beta_+)
% 	}
% \]
%
% \begin{definition}[WIPF]\label{D:WIPF}\hfill{}\\
% 	Let us say that $(X,m,T)$ with $f:X\to \Gamma$ have \emph{Weakly Independent Past and Future}
% 	if the spaces $(B^\pm,\beta^\pm)$ are non-atomic and
% 	\[
% 		\beta_+(E_+)>0,\ \beta_-(E_-)>0\quad\Longrightarrow\quad \wt{m}(p^{-1}_+(E_+)\cap p^{-1}_-(E_-))>0
% 	\]
% 	for any measurable subsets $E_+\subset B_+$, $E_-\subset B_-$.
% \end{definition}
% %
% This condition is the only assumption needed in the Main Theorem \ref{T:main}, and it is satisfied in a number
% of situations of interest that are discussed below.
% However before stating the results let us point out some restrictions that condition (WIPF) implicitly imposes on
% the system $(X,m,T)$, the group $\Gamma$, and the map $f:X\to \Gamma$.

\begin{example}\label{E:RW}
	Let $\mu$ be a generating probability measure on a (countable) group $\Gamma$,
	$(X,m,T)$ be the Bernoulli system $(\Gamma^\bbZ,\mu^\bbZ)$ with the shift $T:(x_i)\mapsto(x_{i+1})$,
	and $f:X\to\Gamma$ given by $f(x)=x_1$.
	Then $\wt{X}$ is the space of paths for random walks and $B_+$ and $B_-$ are the Furstenberg-Poisson boundaries
	for $\mu$ and $\check\mu$ respectively. They are weakly independent $B_+\perp B_-$.
	Note that the assumption that $\mu$ is generating is essential here, for if $\mu$ is supported
	on a proper subgroup $\Gamma_0<\Gamma$ then the non-trivial $\Gamma$-space $\Gamma/\Gamma_0$
	is a common quotient of $\wt{X}$, $B_+$, $B_-$, and $B_+\not\perp B_-$.
\end{example}

\begin{example}\label{E:geo}
	Let $M$ be a closed Riemannian manifold of negative curvature, $\Gamma=\pi_1(M)$ the fundamental group,
	$X=SM$ unit tangent bundle, $T$ the time one geodesic flow on $X$,
	and $m$ be Lebesgue-Liouville measure, or Bowen-Margulis measure, or any other Gibbs measure.
	We define a cocycle $f_n:X\to \Gamma$, $n\in\bbZ$, by
	\[
		\wt{T}^n(\sigma(x))=f_n(x).\sigma(T^nx)
	\]
	where $\wt{T}^t$ is the geodesic flow on the unit tangent bundle $S\wt{M}$ of the universal cover $\wt{M}$,
	and $\sigma:SM\to S\wt{M}$ is the section of the covering map $\pi:S\wt{M}\to SM$ corresponding to a measurable
	choice of a fundamental domain, say a Dirichlet domain.
	%  to be the homotopy class of the curve obtained by connecting time-one geodesic path
	% $x\to Tx$ to/from the base point $o\in M$ by pre-chosen family $\{\sigma_p\}_{p\in M}$ of paths
	% connecting $o$ to every point $p\in M$.
	The $B_-$ and $B_+$ are then realized on the geometric boundary $\partial\wt{M}$
	and the measure classes represent those of stable/unstable foliations.
	One has weak independence $B_-\perp B_+$ as a consequence of the local product structure of the conditional measures on
	stable/unstable leaves, and the mixing condition.
\end{example}

\begin{theorem}\label{T:simple-Lya}\hfill{}\\
	Let $(X,m,T)$ be an invertible ergodic p.m.p. system, $f:X\to \Gamma$ a measurable map, so that
	$B_-\perp B_+$ in the above sense. Then
	\begin{itemize}
		\item[{\rm (i)}]
		Let $G$ be a connected, non-compact, center free, simple, real Lie group and $\rho:\Gamma\to G$
		a representation with Zariski dense image.
		Then the map
		\[
			F:X\ \overto{f}\ \Gamma\ \overto{\rho}\ G
		\] 
		has simple Lyapunov spectrum over $(X,m,T)$, provided it is integrable.
		\item[{\rm (ii)}]
		Let $\Gamma\acts (Z,\zeta)$ be an ergodic p.m.p. action, and $\rho:\Gamma\times Z\to G$ a
		Zariski dense cocycle into $G$ as above.
		Then the skew-product
		\begin{equation}\label{e:skewproduct}
			(X\times Z,m\times\zeta,T_f),\qquad T_f:(x,z)\mapsto (Tx,f(x).z)
		\end{equation}
		is ergodic, and the map $F:X\times Z\to G$,  $F(x,z)=\rho(f(x),z)$,
		has simple Lyapunov spectrum provided it is integrable.
	\end{itemize}
	Non-degeneracy of the Lyapunov spectrum ($\lambda_1>0$) remains valid if Zariski density condition on $\rho$ is replaced
	by the weaker condition that the algebraic hull of $\rho$ is non-amenable.
\end{theorem}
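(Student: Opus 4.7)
The strategy is to use the characteristic maps provided by Theorem~\ref{T:char-alg} to build a $T$-equivariant frame on $X$ indexed by the Weyl group, and then to argue that this frame must coincide (as an ordered object) with the Oseledets decomposition, forcing simplicity.

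Step one is to verify that $(B_-,B_+)$ is a $\Gamma$-boundary pair in the sense of Definition~\ref{D:boundary-pairs}. Amenability of each $B_\pm$ is inherited from the amenable $\Gamma$-action on $\wt{X}$, and the relative isometric ergodicity of $B_-\times B_+\to B_\pm$ is exactly what Theorem~\ref{T:wind-boundaries} (as referenced in Section~2) extracts from the weak independence $B_-\perp B_+$. Step two is to apply Theorem~\ref{T:char-alg} to the Zariski dense $\rho$, producing $\Gamma$-maps $\phi_\pm:B_\pm\to G/P$ and a joint map $\phi_\bowtie:B_-\times B_+\to G/A'$, together with the parameterization $\cW{G}\leftrightarrow\Map_\Gamma(B_-\times B_+,G/A')$ by post-composition.

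Step three is to translate this boundary-theoretic data into dynamics. The quotient $\wt{X}\to B_-\times B_+$ is $\bbZ$-invariant and $\Gamma$-equivariant, and is well-defined as a measure-class map thanks to $B_-\perp B_+$. Pulling $\phi_\bowtie$ back and then descending through the Mackey correspondence for the cocycle $f$---equivalently, trivializing the induced $G/A'$-bundle $\wt{X}\times_\Gamma G/A'\to X$ and tracking the $\bbZ$-action---gives a $T$-equivariant measurable $\xi_\bowtie:X\to G/A'$ satisfying $\xi_\bowtie(Tx)=F(x)\cdot\xi_\bowtie(x)$ almost everywhere. In the model case $G=\SL_d(\bbR)$, $\xi_\bowtie$ is a measurable $F$-equivariant ordered decomposition $\bbR^d=\ell_1(x)\oplus\cdots\oplus\ell_d(x)$. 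The cocycle case (ii) is handled by running the same scheme on the skew-product $(X\times Z, T_f)$, whose ergodicity is obtained en route from amenability and isometric ergodicity of $(B_-,B_+)$ together with the Zariski density hypothesis on the cocycle.

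Step four, the payoff, is to show that this $T$-equivariant frame exhibits the simple Lyapunov spectrum. By the Multiplicative Ergodic Theorem applied to each $F$-equivariant line bundle $\ell_j$, there is a Lyapunov exponent $\mu_j\in\bbR$, and the multiset $\{\mu_j\}$ is the full Lyapunov spectrum of $F$. The ordering of the $\ell_j$ comes from the Weyl chamber structure of $G/A'$ and corresponds to a specific element of the $\cW{G}$-orbit $\Map_\Gamma(B_-\times B_+,G/A')$. If two Lyapunov exponents were to coincide, one could exchange the corresponding line bundles by a measurable $F$-equivariant automorphism and translate back through the Mackey correspondence to produce a second $\Gamma$-map in $\Map_\Gamma(B_-\times B_+,G/A')$ that is distinct from $\phi_\bowtie$ (as it uses a nontrivial Weyl permutation) yet measurably indistinguishable from it, contradicting the bijection in Theorem~\ref{T:char-alg}(iii). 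The non-degeneracy addendum, under the weaker hypothesis that the Zariski closure of $\rho(\Gamma)$ is non-amenable, is proved by the same scheme applied inside this closure with $P$ replaced by a proper parabolic, producing a nontrivial $T$-equivariant partial flag on $X$ that forces a positive top Lyapunov exponent. The technical crux---and the main obstacle to a clean write-up---is step three together with the conversion, in step four, of ``two equal Lyapunov exponents'' into a genuine collision among the $\cW{G}$-indexed elements of $\Map_\Gamma(B_-\times B_+,G/A')$.
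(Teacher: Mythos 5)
Your Steps 1--3 match the paper's scaffolding: $B_\pm\perp$ gives a boundary pair via Theorem~\ref{T:wind-boundaries}, Theorem~\ref{T:char-alg} furnishes $\phi_\pm$ and $\phi_\bowtie$, and pulling back and descending through Observation~\ref{O:equiv-inv} yields an $F$-equivariant $\psi_\bowtie:X\to G/A'$ (your $\xi_\bowtie$), which in the $\SL_d(\bbR)$ model is a measurable $F$-equivariant line decomposition. Step 4, however, has a real gap in two places.

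First, the claim that each $F$-equivariant line bundle $\ell_j$ carries a well-defined Lyapunov exponent and that the multiset $\{\mu_j\}$ is the Lyapunov spectrum of $F$ is not justified, and in general is simply false without further control. An $F$-equivariant decomposition $\bbR^d=\ell_1(x)\oplus\cdots\oplus\ell_d(x)$ produced from the boundary need not coincide with the Oseledets decomposition. The integrated stretch rates $\int\log\|F(x)|_{\ell_j(x)}\|\,dm$ of the individual bundles agree with the Oseledets exponents only after conjugating $F$ into the Cartan $A=\exp(\frak a)$ by a \emph{tempered} (controlled) $c:X\to G$; in general the Oseledets filtration and the boundary frame can sit transversally and the bundle exponents can all degenerate to $\lambda_1$. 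The paper handles precisely this issue: it uses $\psi_\bowtie$ and $\psi_-$ to define $c:X\to G$ with $c(Tx)F(x)c(x)^{-1}\in A$ and $c(x)\psi_-(x)=eP$, and then passes to a Kakutani-induced system on a positive-measure set $X^*$ where $c$ and the boundary measure $\nu$ are confined to compact sets. Without the induction step your identification of bundle exponents with Lyapunov exponents does not go through. Second, the concluding contradiction is not a contradiction: if two exponents $\mu_i=\mu_j$ coincide, the ``swapped'' map is $w\circ\phi_\bowtie$ for a transposition $w\in\cW{G}$, which is exactly one of the elements listed in Theorem~\ref{T:char-alg}(iii) -- there is nothing inconsistent about its existence, and equal growth rates do not make the two distinct line bundles ``measurably indistinguishable.''

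What is missing entirely from your outline is the analytic content that actually drives simplicity in the paper: (a) Proposition~\ref{P:harmonic}, in particular item (iii) that the boundary measures $\nu_\pm(x)$ are \emph{proper} (give zero mass to proper algebraic subvarieties of $G/P$), whose proof is exactly where $B_-\perp B_+$ and Zariski density are invoked; (b) the martingale convergence of $F(T^{-1}x)\cdots F(T^{-n}x)_*\nu_-(T^{-n}x)$ to the Dirac mass $\delta_{\psi_-(x)}$; (c) the contraction lemma translating convergence to a Dirac mass on $G/P$ of pushforwards $\exp(a_n)_*\nu_n$ of a compact family of proper measures into $\chi(a_n)\to\infty$ for every positive root $\chi$; and (d) the Kesten-type Lemma~\ref{L:Kesten} upgrading this almost-everywhere divergence of ergodic sums to $\int\chi(a)\,dm>0$, i.e.\ $\chi(\Lambda)>0$, which \emph{is} simplicity. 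You would need all of these to make the argument work; the frame-matching picture alone does not close the proof.
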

Let us note some consequences of this result.
In the random walk setting (Example \ref{E:RW}) we recover the results of Guivarc'h-Raugi, Gol'dsheid-Margulis on simplicity of the Lyapunov spectrum
for Zariski dense random walk on a simple Lie group $G$ by applying part (i) of the theorem to $X=G^\bbZ$, $m=\mu^\bbZ$ with the shift.
The addendum about non-degeneracy of the spectrum is precisely Furstenberg's condition for $\lambda_1>0$.
Part (ii) gives already a new result:
\begin{corollary}
	Let $\Gamma$ be a (countable) group, $\Gamma\acts (Z,\zeta)$ an ergodic p.m.p. action, $\rho:\Gamma\times Z\to G$ a Zariski dense cocycle.
	Let $\mu$ be a generating probability measure on $\Gamma$ with $\log\|\rho(g,z)\|\in L^1(\mu\times\zeta)$, let $X=\Gamma^\bbZ$, $m=\mu^\bbZ$,
	$T:(x_i)\mapsto(x_{i+1})$, and $\ol{T}:(x,z)\mapsto (Tx,x_1.z)$. Then the cocycle $F_n:X\times Z\to G$ given by
	\[
		F_n(x,z)=\rho(x_n\cdots x_1,z)
	\]
	has a simple Lyapunov spectrum.
	If $\rho$ is only assumed to have non-amenable algebraic hull, then the spectrum is non-degenerate.
\end{corollary}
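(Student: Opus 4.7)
The plan is to recognize this corollary as a direct specialization of Theorem~\ref{T:simple-Lya}(ii) to the case where the driving system is Bernoulli, so the entire task reduces to matching the two setups and verifying the weak independence hypothesis. I will take the base ergodic p.m.p. system of the theorem to be $(X,m,T)=(\Gamma^\bbZ,\mu^\bbZ,\text{shift})$ and set $f:X\to \Gamma$ to be the coordinate projection $f(x)=x_1$. With this choice the associated $\Gamma$-valued cocycle $f_n(x)=f(T^{n-1}x)\cdots f(x)$ is $x_n\cdots x_1$; the skew-product $T_f$ of~(\ref{e:skewproduct}) is exactly the $\overline{T}$ appearing in the corollary; and the $G$-valued map $F:X\times Z\to G$ from the theorem, $F(x,z)=\rho(f(x),z)=\rho(x_1,z)$, iterates along $T_f$-orbits via a one-line induction on the $\rho$-cocycle identity to give $F_n(x,z)=\rho(x_n\cdots x_1,z)$. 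Integrability of $F$ is immediate from the given condition $\log\|\rho(g,z)\|\in L^1(\mu\times\zeta)$, and Zariski density (resp.\ non-amenability of the algebraic hull) of the cocycle is exactly the hypothesis Theorem~\ref{T:simple-Lya}(ii) requires for simplicity (resp.\ non-degeneracy).

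The only hypothesis of Theorem~\ref{T:simple-Lya}(ii) that is not visible on the face of the corollary is the weak independence $B_-\perp B_+$. For this I will appeal to Example~\ref{E:RW}: for the Bernoulli shift on $\Gamma^\bbZ$ with $f(x)=x_1$, the tail quotients $\wt{X}\to B_\pm$ are canonically identified with the Furstenberg--Poisson boundaries of $(\Gamma,\mu)$ and $(\Gamma,\check\mu)$, and the assumption that $\mu$ is generating is precisely what rules out the obvious obstruction (a common nontrivial $\Gamma$-quotient of the form $\Gamma/\Gamma_0$ coming from $\supp(\mu)\subset\Gamma_0\lneq \Gamma$), so that $B_-\perp B_+$ holds in the measure-class sense of~(\ref{e:wind}).

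Granting this input, Theorem~\ref{T:simple-Lya}(ii) immediately yields both the ergodicity of $(X\times Z,m\times\zeta,\overline{T})$ and simplicity of the Lyapunov spectrum of $F_n$, as well as the non-degeneracy conclusion under the weaker algebraic-hull hypothesis. So the proof is really a translation exercise; no new obstacle appears at this level, because the substantive content, namely producing and exploiting the characteristic maps $B_\pm\to G/P$ via Theorem~\ref{T:char-alg}, has already been absorbed into Theorem~\ref{T:simple-Lya}. The ``hard part'', were one to unwind the whole chain, is that theorem itself, whose proof uses the boundary/characteristic-map machinery to place the Oseledets flag maps in general position and thereby force simplicity; but from the vantage point of the corollary, only the identification of $B_\pm$ with the Furstenberg--Poisson boundaries and the generating assumption on $\mu$ need to be checked.
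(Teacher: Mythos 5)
Your proposal is correct and follows exactly the route the paper intends: the corollary is stated immediately after the observation that Example~\ref{E:RW} (Bernoulli shift, $f(x)=x_1$, $B_\pm$ the Furstenberg--Poisson boundaries of $\mu,\check\mu$, weakly independent because $\mu$ is generating) places the random walk setting within the hypotheses of Theorem~\ref{T:simple-Lya}, and part (ii) of that theorem then gives the conclusion verbatim. The only small caution is your parenthetical suggesting that ruling out the $\Gamma/\Gamma_0$ obstruction by itself yields $B_-\perp B_+$ --- the paper's remark in Example~\ref{E:RW} only shows that assumption is \emph{necessary}, while the positive assertion of weak independence is what Example~\ref{E:RW} supplies and what you should simply cite, as you in fact do.
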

The result about non-degeneracy of the Lyapunov spectrum in this setting is due to Ledrappier \cite{Led}.

\begin{corollary}
	Let $M$ be a compact negatively curved manifold, $T^t$ the geodesic flow on the unit tangent bundle $X=SM$ to $M$, $m$ a Gibbs measure on $X$,
	and $f_n:X\to \Gamma=\pi_1(M)$ a cocycle as in Example~\ref{E:geo}. Then
	\begin{itemize}
		\item[{\rm (i)}] Given a Zariski dense representation $\rho:\Gamma\to G$ in a simple Lie group, the Lyapunov spectrum of
		$F=\rho\circ f$ is simple.
		\item[{\rm (ii)}] Given any ergodic p.m.p. action $\Gamma\acts (Z,\zeta)$ the skew-product $X\times_f Z$ is ergodic
		and if $\rho:\Gamma\times G$ is a Zariski dense cocycle with $\log\|\rho(g,-)\|\in L^1(Z)$, $g\in\Gamma$, then
		the Lyapunov spectrum of $F_n(x,z)=\rho(f_n(x),z)$ is simple.
	\end{itemize}
\end{corollary}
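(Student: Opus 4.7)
The plan is to deduce both (i) and (ii) directly from Theorem~\ref{T:simple-Lya}, applied to the discrete system $(X,m,T)$ with $T=T^1$ the time-one map of the geodesic flow and the $\Gamma$-valued cocycle $f:X\to\Gamma$ arising from a measurable Dirichlet fundamental domain as in Example~\ref{E:geo}. The Lyapunov spectra of $T^t$ and $T^1$ differ only by overall rescaling, so simplicity and non-degeneracy pass between the two. What remains is to verify the hypotheses of Theorem~\ref{T:simple-Lya}: (a) integrability of the matrix cocycle, and (b) the weak independence $B_-\perp B_+$ of the past and future tail boundaries attached to $f$.

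For (a), compactness of $M$ bounds the displacement of a geodesic in unit time, so $f(x)=f_1(x)$ essentially takes values in the finite set $\{\gamma\in\Gamma\mid \gamma D\cap \overline{N_1(D)}\ne\emptyset\}$, where $D\subset\tilde M$ is the chosen fundamental domain. Consequently $\log\|\rho(f(x))\|$ is bounded, which gives the $L^1$ hypothesis in (i); in (ii), this bound together with $\log\|\rho(g,\cdot)\|\in L^1(Z)$ for each $g\in\Gamma$ yields integrability of $(x,z)\mapsto\log\|\rho(f(x),z)\|$ on $X\times Z$.

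The substantive step is (b). I would identify the tail quotients $p_\pm:\wt X\to B_\pm$ with the endpoint maps into the geometric boundary $\partial\tilde M$. Concretely, a point $(x,g)\in X\times\Gamma$ lifts to $\tilde x=g\,\sigma(x)\in S\tilde M$, and the backward/forward endpoints $\xi_\pm(\tilde x)\in\partial\tilde M$ are $\Gamma$-equivariant and essentially constant on $\sim_-$ (resp.\ $\sim_+$) classes. Conversely, a Hopf-coordinate argument shows that these endpoints exhaust the tail $\sigma$-algebras, so $B_\pm\cong(\partial\tilde M,\nu_\pm)$ with $\nu_\pm$ the stable/unstable conditional class of the Gibbs state $m$. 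The local product structure of Gibbs measures on Anosov flows (Bowen--Ruelle) then asserts that in Hopf coordinates $S\tilde M\cong (\partial\tilde M\times\partial\tilde M\setminus\Delta)\times\bbR$, the lift of $m$ has the form $h\cdot d\nu_-\otimes d\nu_+\otimes dt$ for a positive density $h$. Projecting out the $\bbR$ and $\Gamma$ directions yields $(p_-\times p_+)[\wt m]\in[\nu_-\times\nu_+]=p_-[\wt m]\times p_+[\wt m]$, which is exactly~(\ref{e:wind}).

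With (a) and (b) in hand, Theorem~\ref{T:simple-Lya}(i) gives simplicity in case (i), while Theorem~\ref{T:simple-Lya}(ii) simultaneously gives ergodicity of $X\times_f Z$ and simplicity of the spectrum of $F_n(x,z)=\rho(f_n(x),z)$ in case (ii); the non-degeneracy addendum yields $\lambda_1>0$ under the weaker assumption that the algebraic hull of $\rho$ is non-amenable. The main obstacle I anticipate is step (b): showing that the past (respectively future) tail $\sigma$-algebra is generated precisely by the backward (respectively forward) endpoint requires the mixing of the geodesic flow together with absolute continuity of stable/unstable holonomies, and assembling these into the clean product formula demanded by~(\ref{e:wind}) is the essential measure-theoretic work.
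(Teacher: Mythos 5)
Your proof is correct and follows the paper's intended route: the corollary is presented as an immediate consequence of Theorem~\ref{T:simple-Lya} together with the weak independence $B_-\perp B_+$ asserted in Example~\ref{E:geo} (local product structure of the Gibbs conditionals on stable/unstable leaves, plus mixing). You usefully supply the two details the paper leaves implicit --- integrability from compactness of $M$ (so $f$ takes finitely many values), and the Hopf-coordinate identification of the tail boundaries $B_\pm$ with $(\partial\tilde M,\nu_\pm)$ --- but the overall strategy is the same.
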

Part (ii) for the case where $M$ is a constant curvature surface, can be restated as asserting that for an ergodic p.m.p.
action $\SL_2(\bbR)\acts(X,m)$ and Zariski dense integrable cocycle $\rho:\PSL_2(\bbR)\times X\to G$
the restriction to the diagonal subgroup $F_n(x)=\rho(g^n,x)$ for some hyperbolic $g\in\PSL_2(\bbR)$, has simple Lyapunov spectrum.
This was recently obtained by Eskin-Matheus \cite{Eskin+Matheus}.

\subsection*{Outline of the proof} % (fold)

The main observation is that the setting of $f:X\to\Gamma$ and condition $B_-\perp B_+$,
described above, allow one to use boundary theory.
\begin{theorem}\label{T:wind-boundaries}\hfill{}\\
	Let $(X,m,T)$ and $f:X\to \Gamma$ be as above, and assume that $B_-\perp B_+$.\\
	Then $\wt{X}\ec\bbZ$ is isometrically ergodic, projections $\wt{X}\ec\bbZ\to B_\pm$ are relatively isometrically ergodic,
	and $(B_-, B_+)$ are boundary pair for $\Gamma$.
\end{theorem}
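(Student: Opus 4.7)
My plan is to reduce all three assertions to a single technical claim---that shift-invariant measurable maps on $\wt X$ factor modulo null through $B_\pm$---and then close the argument using weak independence and the descent property for relative isometric ergodicity.

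The first step would be to establish this \emph{key claim}: any shift-invariant measurable map $F:\wt X\to Y$ into a standard Borel space factors, up to null, through both $p_-:\wt X\to B_-$ and $p_+:\wt X\to B_+$. I would verify this for shift-invariant Borel sets $A\subset\wt X$ first: take a cylinder approximation $A^{(N)}\in\sigma(g_{-N},\ldots,g_N)$ with $\wt m(A\triangle A^{(N)})<\epsilon$; by shift-invariance of $\wt m$ and $A$, the shifted cylinder $T^{-n}A^{(N)}\in\sigma(g_{n-N},\ldots,g_{n+N})\subset\mathcal{T}_{n-N}^+$ still $\epsilon$-approximates $A$, where $\mathcal{T}_m^+=\sigma(g_m,g_{m+1},\ldots)$. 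Letting $n$ grow shows $A$ is approximable by sets in $\mathcal{T}_m^+$ for every $m$, so $A$ lies in the future tail $\mathcal{T}_\infty^+=\bigcap_m\mathcal{T}_m^+$ after completion. Applied to preimages of Borel subsets of $Y$ and combined with $\bbZ$-invariance, this gives factorization of $F$ through $B_+$; the symmetric statement for the past tail and $B_-$ would be handled identically.

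I would then deduce the ergodic assertions. For isometric ergodicity of $\wt X\ec\bbZ$, lift a $\Gamma$-map $\phi:\wt X\ec\bbZ\to M$ into a separable metric isometric $\Gamma$-space to a shift-invariant $\Gamma$-equivariant $\tilde\phi:\wt X\to M$ and apply the key claim to obtain $\phi_\pm\in\Map_\Gamma(B_\pm,M)$ satisfying $\phi_-\circ p_-=\phi_+\circ p_+$ almost everywhere. The weak independence $B_-\perp B_+$ gives that the joint push-forward of $\wt m$ to $B_-\times B_+$ agrees in measure class with $p_-[\wt m]\times p_+[\wt m]$, so a Fubini argument forces $\phi_\pm$ to be essentially constant with common value, which by equivariance is $\Gamma$-fixed. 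For relative isometric ergodicity of $\wt X\ec\bbZ\to B_+$, I would similarly lift a compatible pair $(f,f_0)$, with $f:\wt X\ec\bbZ\to\calM$ and $f_0:B_+\to V$, to a shift-invariant $\tilde f:\wt X\to\calM$ and apply the key claim to produce the required $f_1:B_+\to\calM$; the compatibility $q\circ f_1=f_0$ is automatic.

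Finally, I would conclude the boundary pair property. Amenability of $\Gamma\acts B_\pm$ reduces to amenability of $\Gamma\acts\wt X$: under the identification $\wt X\cong X\times\Gamma$, the $\Gamma$-action is by left multiplication on the second factor, so any equivariant map into a convex compact $\Gamma$-space can be prescribed freely on the $X$-section and extended, and amenability descends to the $\Gamma$-quotients $B_\pm$. Proposition~\ref{P:riser-props}.(ii) applied to the chain $\wt X\ec\bbZ\to B_-\times B_+\to B_\pm$ then promotes the relative isometric ergodicity of the composition to the second arrow, completing the boundary pair axioms. I expect the delicate point to be the cylinder-shifting argument in the key claim, which requires uniform control in $n$ and care with completions; the weak independence hypothesis itself plays its role only at the Fubini step of isometric ergodicity.
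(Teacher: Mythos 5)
The core of your proposal is a ``key claim'' that every shift-invariant measurable map on $\wt{X}$ factors through \emph{both} $p_-:\wt{X}\to B_-$ and $p_+:\wt{X}\to B_+$. This claim is false, and the argument built on it collapses. To see that it cannot hold, consider the random walk setting (Example~\ref{E:RW}) on a non-elementary group such as a free group: the forward boundary map $\wt{X}\to B_+$ is itself $\bbZ$-shift-invariant, but it certainly does not factor through $B_-$. More structurally, if the key claim were true then, combined with weak independence $B_-\perp B_+$, every shift-invariant function would be measurable with respect to both tail $\sigma$-algebras simultaneously and hence essentially constant; this would force $\wt{X}\ec\bbZ$ to be trivial, and then ``relative isometric ergodicity of $\wt{X}\ec\bbZ\to B_\pm$'' would degenerate to the false statement that $B_\pm$ are points. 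In fact the heuristic (and, for random walks, Kaimanovich's theorem) is that $\wt{X}\ec\bbZ$ is $B_-\times B_+$, not a common quotient of both.

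The gap in your cylinder-shifting argument is exactly where you anticipate difficulty, but it is fatal rather than merely delicate: $\wt{m}$ is an \emph{infinite} measure, and is the only shift-invariant measure in its class. Your estimate $\wt{m}(A\triangle T^{-n}A^{(N)})=\wt{m}(A\triangle A^{(N)})$ requires shift-invariance of the measure, while the $L^1$-approximation of $A$ by cylinder sets requires a finite measure; no single measure does both jobs. If one works with an equivalent probability measure $\wt{m}_1$ instead, the equality $\wt{m}_1(A\triangle T^{-n}A^{(N)})=\wt{m}_1(A\triangle A^{(N)})$ fails, and the ``let $n$ grow'' step no longer produces approximations of $A$ inside $\mathcal{T}_m^+$ with uniformly small error. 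Separately, your reduction of amenability of $\Gamma\acts B_\pm$ to amenability of $\Gamma\acts\wt{X}$ is also not valid: amenability of a Lebesgue $\Gamma$-action does not pass to factor actions (the extreme case $\Gamma\acts\Gamma$ vs.\ $\Gamma\acts\{\mathrm{pt}\}$ for $\Gamma$ non-amenable). The paper's route is to use amenability of the semigroup $\bbN$ acting by the one-sided shift, in the spirit of Zimmer, and then to prove relative isometric ergodicity of $\wt{X}\ec\bbZ\to B_\pm$ directly by a recurrence/martingale argument in the spirit of Kaimanovich and of the proof of Theorem~\ref{T:FP}, using weak independence at that stage rather than attempting to collapse $\wt{X}\ec\bbZ$ onto a single boundary.
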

We shall not describe the proof of this result here, but remark that amenability of $B_\pm$ follows from amenability of $\bbN$
(as in Zimmer's \cite{Zimmer-amen}), and other statements reduce to
relative isometric ergodicity of the maps $\wt{X}\ec\bbZ\to B_\pm$.
The proof of this key property is motivated by Kaimanovich \cite{Kaim-DE}.

\begin{observation}\label{O:equiv-inv}
	Let $V$ be a Borel $\Gamma$-space, and $\Map_f(X,V)$ denote the space
	of all $f$-\emph{equivariant maps}, i.e. measurable maps $\phi:X\to V$ satisfying $m$-a.e. $\phi(Tx)=f(x).\phi(x)$.
	Then there exists a natural bijection between $f$-equivariant maps and $\bbZ$-invariant $\Gamma$-equivariant maps $\wt{X}\to V$,
	which gives a bijection
	\[
		\Map_f(X,V)\ \cong\ \Map_\Gamma(\wt{X}\ec\bbZ,V).
	\]
\end{observation}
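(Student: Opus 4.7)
The plan is to exhibit an explicit pair of mutually inverse maps realizing the bijection. The geometric idea is that $\wt{X}$ is, up to null sets, the $\Gamma$-orbit of the measurable section $\iota:X\to\wt{X}$, $\iota(x)=(f_n(x))_{n\in\bbZ}=\pi(x,e)$, where $\pi:X\times\Gamma\to\wt{X}$, $\pi(x,g)=(f_n(x)g^{-1})_{n\in\bbZ}$, is the defining projection. Unwinding the cocycle identity $f_n(Tx)=f_{n+1}(x)f(x)^{-1}$ together with the $\bbZ$- and $\Gamma$-actions $\mathrm{shift}:(g_i)\mapsto(g_{i+1})$ and $h.(g_i)=(g_ih^{-1})$ gives the key relation
\[
	\iota(Tx)=f(x).\mathrm{shift}(\iota(x)).
\]

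The forward direction is pullback along $\iota$: given $\Phi\in\Map_\Gamma(\wt{X}\ec\bbZ,V)$, set $\phi(x)=\Phi(\iota(x))$, and apply $\Phi$ to the displayed relation, using $\bbZ$-invariance to absorb the shift and $\Gamma$-equivariance to extract $f(x)$; this yields $\phi(Tx)=f(x).\phi(x)$, so $\phi\in\Map_f(X,V)$. Conversely, given $\phi\in\Map_f(X,V)$, I would build $\tilde\Phi:X\times\Gamma\to V$ by $\tilde\Phi(x,g)=g.\phi(x)$, which is $\Gamma$-equivariant in $g$ and, via the computation
\[
	\tilde\Phi(Tx,gf(x)^{-1})=gf(x)^{-1}.\phi(Tx)=gf(x)^{-1}.(f(x).\phi(x))=g.\phi(x),
\]
invariant under the $\bbZ$-action $T_f(x,g)=(Tx,gf(x)^{-1})$ on $X\times\Gamma$ that $\pi$ intertwines with the shift on $\wt{X}$.

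The main point requiring care is verifying that $\tilde\Phi$ descends to a measurable map on $\wt{X}$. Pointwise, $\pi(x,g)=\pi(x',g')$ forces $g=g'$ and $f_n(x)=f_n(x')$ for all $n$, so strict well-definedness would demand $\phi$ be constant on the tail-equivalence classes $\{x'\in X\mid f_n(x')=f_n(x)\ \forall n\}$, which is not automatic. However, $\Map$-spaces are taken modulo null sets and $\wt{m}=\pi_*(m\times c_\Gamma)$ by construction, so this is handled by the standard identification of measurable functions on $\wt X$ with $\pi^{-1}(\mathrm{Borel}(\wt X))$-measurable functions on $X\times\Gamma$: the $T_f$-invariant $\Gamma$-equivariant map $\tilde\Phi$ built from $\phi$ is automatically measurable with respect to this pullback $\sigma$-algebra (it depends on $x$ only through the trajectory $(f_n(x))_n$ paired with $g$, by construction), and thus descends to the desired $\Phi\in\Map_\Gamma(\wt X\ec\bbZ,V)$.

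Finally, the two assignments are mutually inverse by inspection: starting from $\Phi$, one has $\tilde\Phi(x,g)=g.\phi(x)=g.\Phi(\iota(x))=\Phi(g.\iota(x))=\Phi(\pi(x,g))$ by $\Gamma$-equivariance (using $g.\iota(x)=\pi(x,g)$), so $\tilde\Phi=\Phi\circ\pi$ and we recover $\Phi$; starting from $\phi$, evaluation at $(x,e)$ gives $\tilde\Phi(x,e)=\phi(x)=\Phi(\iota(x))$, recovering $\phi$.
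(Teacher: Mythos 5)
The key relation $\iota(Tx)=f(x).\mathrm{shift}(\iota(x))$ and the forward direction (pullback along $\iota$) are correct and give a well-defined injection $\Map_\Gamma(\wt{X}\ec\bbZ,V)\hookrightarrow\Map_f(X,V)$. The gap is in the inverse direction, precisely at the descent step. The parenthetical justification that $\tilde\Phi(x,g)=g.\phi(x)$ ``depends on $x$ only through the trajectory $(f_n(x))_n$ paired with $g$, by construction'' is not correct: $\tilde\Phi$ depends on $x$ through $\phi$, and nothing in the definition of $f$-equivariance forces $\phi$ to be measurable with respect to $\mathcal A=\sigma(f_n:n\in\bbZ)$. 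Unwinding $\wt{m}=\pi_*(m\times c_\Gamma)$, one finds $\pi^{-1}(\mathcal B_{\wt X})=\mathcal A\otimes\mathcal B_\Gamma$ up to null sets, so $\tilde\Phi$ descends to $\wt X$ iff $\phi$ is $\mathcal A$-measurable; this is exactly what needs proving, and it is the whole content of the observation once the trivial parts are stripped away.

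Moreover, the needed measurability can genuinely fail without an extra hypothesis on the pair $(T,f)$. Take $X=S^1\times\bbZ/2$ with $T(x,\epsilon)=(x+\alpha,\epsilon+1)$ for $\alpha$ irrational (this is ergodic), $\Gamma=\bbZ/2$ and $f\equiv 1$. Then every $f_n$ is constant, $\mathcal A$ is trivial, and $\wt X$ is supported on the two sequences $(n\ \mathrm{mod}\ 2)_n$ and $(n+1\ \mathrm{mod}\ 2)_n$, which the shift swaps ergodically, so $\wt X\ec\bbZ$ is a point and $\Map_\Gamma(\wt X\ec\bbZ,V)=V^\Gamma$. Taking $V=\bbZ/2$ with the translation $\Gamma$-action gives $V^\Gamma=\emptyset$. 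Yet $\phi(x,\epsilon)=\epsilon$ satisfies $\phi(Tx)=f(x).\phi(x)$, so $\Map_f(X,V)\ne\emptyset$: your construction produces $\tilde\Phi((x,\epsilon),g)=g+\epsilon$, which depends on $\epsilon$ and therefore cannot descend along $\pi$. So your ``standard identification'' step is not merely under-justified, it is the point at which the argument can break, and some hypothesis forcing $\mathcal A=\mathcal B_X$ modulo null sets (equivalently, essential injectivity of $x\mapsto(f_n(x))_n$, as in the Bernoulli Example~\ref{E:RW}) must be invoked and verified before the inverse map is well-defined. In particular you cannot wave this away as a routine descent of a pullback $\sigma$-algebra; you need to prove that an $f$-equivariant $\phi$ is $\mathcal A$-measurable, and to do so you must identify and use the extra hypothesis.
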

This observation gives the following fact, that was included in the statement of Theorem~\ref{T:simple-Lya}.(ii).
\begin{corollary}[of Theorem~\ref{T:wind-boundaries}]\label{C:Moore}\hfill{}\\
	Let $(X,m,T)$ and $f:X\to \Gamma$ be such that $B_-\perp B_+$.
	Then for any ergodic p.m.p. action $\Gamma\acts (Z,\zeta)$
	the skew-product (\ref{e:skewproduct}) is ergodic.
\end{corollary}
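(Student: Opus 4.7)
The plan is to reduce ergodicity of the skew-product to isometric ergodicity of $\wt{X}\ec\bbZ$, via the Koopman representation on $L^2(Z,\zeta)$ and Observation~\ref{O:equiv-inv}. Suppose $E\subset X\times Z$ is a $T_f$-invariant measurable set, and set $E_x=\{z\in Z\mid (x,z)\in E\}$. The identity $T_f^{-1}E=E$ translates, by Fubini, into $E_{Tx}=f(x).E_x$ up to $\zeta$-null sets for $m$-a.e.\ $x$. Consequently the fiber-indicator map $\phi:X\to L^2(Z,\zeta)$, $\phi(x)=1_{E_x}$, is $f$-equivariant with respect to the Koopman $\Gamma$-action $(g.\psi)(z)=\psi(g^{-1}.z)$, which is isometric (in fact unitary) since $\Gamma\acts (Z,\zeta)$ is p.m.p.

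Next I would invoke Observation~\ref{O:equiv-inv} to promote $\phi$ to a $\Gamma$-equivariant map $\wt{\phi}\in\Map_\Gamma(\wt{X}\ec\bbZ,\,L^2(Z,\zeta))$. By Theorem~\ref{T:wind-boundaries}, $\wt{X}\ec\bbZ$ is isometrically ergodic for $\Gamma$, so $\wt{\phi}$ is essentially constant with its essential value lying in $L^2(Z,\zeta)^\Gamma$. The assumed ergodicity of $\Gamma\acts (Z,\zeta)$ identifies this fixed-point subspace with the scalar multiples of $1_Z$.

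Thus $\phi(x)=c\cdot 1_Z$ in $L^2(Z,\zeta)$ for $m$-a.e.\ $x$ and some fixed scalar $c$. Since $\phi(x)=1_{E_x}$ is $\{0,1\}$-valued, one must have $c\in\{0,1\}$, meaning that $\zeta(E_x)$ takes a single value in $\{0,1\}$ for $m$-a.e.\ $x$. Integrating over $X$ yields $(m\times\zeta)(E)\in\{0,1\}$, proving ergodicity. The only points requiring care in the execution are the measurability of $x\mapsto 1_{E_x}$ as an $L^2(Z,\zeta)$-valued map and the Fubini-based passage from $T_f$-invariance of $E$ to the a.e.\ $f$-equivariance of $\phi$; both are routine. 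The entire content of the argument lies in the isometric ergodicity supplied by Theorem~\ref{T:wind-boundaries}, which the plan uses as a black box; no actual obstacle is expected.
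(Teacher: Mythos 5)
Your proposal is correct and follows essentially the same path as the paper's proof: view the fiber data of a $T_f$-invariant object as an $f$-equivariant map $X\to L^2(Z,\zeta)$, promote it via Observation~\ref{O:equiv-inv} to a $\Gamma$-map from $\wt{X}\ec\bbZ$, then apply isometric ergodicity from Theorem~\ref{T:wind-boundaries} followed by ergodicity of $\Gamma\acts(Z,\zeta)$. The only cosmetic difference is that the paper works directly with a general $T_f$-invariant $F\in L^2(X\times Z)$ while you specialize to the indicator function of an invariant set, which is an equivalent formulation of ergodicity.
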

Note that for the random walk setting (Example~\ref{E:RW}) this can be deduced from Kakutani's random ergodic theorem,
and for the geodesic flow setting (Example~\ref{E:geo}) with $M$ being \emph{locally symmetric}, it follows from Moore's ergodicity.
However it is new for geodesic flow on general negatively curved manifolds, and potentially in other situations.
\begin{proof}%[Proof of Corollary~\ref{C:Moore}]
	The claim is that $T_f$-invariant functions $F\in L^2(X\times Z,m\times\zeta)$ are a.e. constant.
	Such an $F$ can be viewed
	as a measurable $f$-equivariant map $X\to L^2(Z,\zeta)$, $x\mapsto F(x,-)$.
	By \ref{O:equiv-inv} it corresponds to a $\Gamma$-map $\Phi:\wt{X}\ec\bbZ\to L^2(Z,\zeta)$.
	Since $\wt{X}\ec\bbZ$ is isometrically ergodic (Theorem~\ref{T:wind-boundaries}),
	$\Phi$ is constant $\phi_0\in L^2(Z,\zeta)^\Gamma$.
	As $\Gamma\acts (Z,\zeta)$ is ergodic, $\phi_0$ is $\zeta$-a.e. a constant $c_0$, and $F$ is $m\times\zeta$-a.e. constant $F(x,z)=c_0$.
\end{proof}

\bigskip

Let us outline the proof of Theorem~\ref{T:simple-Lya}.
We focus on part (i) that refers to the simplicity of the Lyapunov spectrum of $F=\rho\circ f:X\to G$ where $\rho:\Gamma\to G$ is a Zarsiki dense representation.
The proof of part (ii) that refers to cocycles follows the same outline.

By Theorem~\ref{T:wind-boundaries}, the pair $(B_-, B_+)$ constructed from $(X,m,T)$ and $f:X\to\Gamma$ is a boundary pair for $\Gamma$.
Therefore from Theorem~\ref{T:char-alg} there exist $\Gamma$-maps
\[
	\phi_-:B_-\ \overto{}\ G/P,\qquad \phi_+:B_-\ \overto{}\ G/P,\qquad \phi_{\bowtie}:B_-\times B_+\ \overto{}\ G/A'
\]
so that $\phi_-=\pr_1\circ \phi_{\bowtie}$ and $\phi_-=\pr_2\circ \phi_{\bowtie}$, where
$G/A'$ is viewed as a subset
\[
	G/A'\subset G/P\times G/P.
\]
For $n\in\bbZ$ denote by $\calF_{\ge n}=\sigma(f\circ T^n, f\circ T^{n+1},\dots)$ the $\sigma$-algebra generated by
the maps $f\circ T^k:X\to\Gamma$, $k\ge n$. Similarly define $\calF_{<n}=\sigma(f\circ T^{n-1}, f\circ T^{n-2},\dots)$.
Then $\calF_{\ge n}\subset \calF_{\ge n-1}$ and $\calF_{< n}\supset \calF_{<n-1}$.

\begin{proposition}\label{P:harmonic}\hfill{}\\
	There exists a map $\nu_-:X\to \Prob(G/P)$ with the following properties:
	\begin{itemize}
		\item[{\rm (i)}]
		The map $x\mapsto \nu_-(x)$ is $\calF_{\ge 0}$-measurable and satisfies
		\[
			\nu_-(x) =\bbE\left( F(T^{-1}x)_*\nu_-(T^{-1}x) \mid \calF_{\ge 0}\right).
		\]
		\item[{\rm (ii)}]
		For $m$-a.e. $x\in X$ there is weak-* convergence to Dirac measure
		\[
			\delta_{\psi_-(x)}=\lim_{n\to\infty} F(T^{-1}x)F(T^{-2}x)\cdots F(T^{-n}x)_*\nu_-(T^{-n}x),
		\]
		where $\psi_-$ is an $F$-equivariant map $X\to G/P$.
		\item[{\rm (iii)}]
		For $m$-a.e. $x\in X$ the measure $\nu_-(x)$ is proper, i.e. gives zero mass to proper algebraic subspaces $W\subsetneq G/P$.
	\end{itemize}
	There is a $\calF_{<0}$-measurable map $\nu^+:X\to\Prob(G/P)$ and $\psi_+\in\Map_F(X,G/P)$
	with similar properties with respect to $T^{-1}$. Moreover, there exists
	\[
		\psi_{\bowtie}\in \Map_F(X,G/A'),\qquad \textrm{so\ that}\qquad
		\psi_-=\pr_1\circ \psi_{\bowtie},\qquad \psi_+=\pr_2\circ \psi_{\bowtie}
	\]
	where $\pr_i:G/A'\to G/P$ are the projections.
\end{proposition}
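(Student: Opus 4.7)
The strategy is to produce $\psi_-,\psi_+,\psi_\bowtie$ as $F$-equivariant maps on $X$ by transporting the characteristic maps of Theorem~\ref{T:char-alg} through the tail quotients $\widetilde X\to B_-$ and $\widetilde X\to B_+$, and then to define $\nu_-(x)$ as the conditional distribution of $\delta_{\psi_-(x)}$ given $\calF_{\ge 0}$. The tail projections are $\Gamma$-equivariant and manifestly $\bbZ$-invariant (tail equivalence is preserved by the shift), so the compositions $\widetilde X\to B_\pm\overto{\phi_\pm}G/P$ and $\widetilde X\to B_-\times B_+\overto{\phi_\bowtie}G/A'$ are $\bbZ$-invariant $\Gamma$-maps. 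Observation~\ref{O:equiv-inv} converts them into $f$-equivariant maps $\psi_\pm\in\Map_F(X,G/P)$ and $\psi_\bowtie\in\Map_F(X,G/A')$, and the identities $\pr_i\circ \phi_\bowtie=\phi_\pm$ of Theorem~\ref{T:char-alg} descend to the required compatibilities $\pr_i\circ \psi_\bowtie=\psi_\pm$.

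Setting $\nu_-(x)=\bbE\bigl[\delta_{\psi_-(x)}\,\big|\,\calF_{\ge 0}\bigr]$ evidently gives an $\calF_{\ge 0}$-measurable, $\Prob(G/P)$-valued function. Two measurability observations drive the verification of (i) and (ii). First, because $(T^{-n})^*\calF_{\ge 0}=\calF_{\ge -n}$, the map $x\mapsto \nu_-(T^{-n}x)$ is $\calF_{\ge -n}$-measurable and satisfies $\nu_-(T^{-n}x)=\bbE[\delta_{\psi_-(T^{-n}x)}\mid \calF_{\ge -n}]$. Second, the product $F(T^{-1}x)\cdots F(T^{-n}x)$ is also $\calF_{\ge -n}$-measurable. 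Combining these with the $F$-equivariance $\delta_{\psi_-(x)}=F(T^{-1}x)\cdots F(T^{-n}x)_*\delta_{\psi_-(T^{-n}x)}$, the tower property for $\calF_{\ge 0}\subset\calF_{\ge -n}$ gives
\[
F(T^{-1}x)\cdots F(T^{-n}x)_*\nu_-(T^{-n}x)\;=\;\bbE\bigl[\delta_{\psi_-(x)}\,\big|\,\calF_{\ge -n}\bigr]
\]
for every $n\ge 0$. Specializing to $n=1$ and conditioning further to $\calF_{\ge 0}$ yields (i). Since $\calF_{\ge -n}$ increases to the full Borel $\sigma$-algebra of $X$, L\'evy's martingale convergence theorem applied to each $h\in C(G/P)$ forces weak-$*$ convergence of the right hand side to $\delta_{\psi_-(x)}$, establishing (ii). The construction of $\nu_+,\psi_+$ is completely symmetric, with $T$ replaced by $T^{-1}$ and $\calF_{\ge 0}$ by $\calF_{<0}$.

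The hard part will be the properness statement (iii). The plan is to reduce it to the classical assertion that $\mu_-=\phi_{-*}[\nu_{B_-}]\in\Prob(G/P)$ is a proper measure, i.e.\ $\mu_-(W)=0$ for every proper algebraic subvariety $W\subsetneq G/P$. Granted this, the law of $\psi_-$ under $m$ lies in the measure class of $\mu_-$, since $\psi_-$ factors through the tail quotient $\widetilde X\to B_-\overto{\phi_-}G/P$; so for any fixed proper $W$ the event $\{\psi_-(x)\in W\}$ is $m$-null, whence $\nu_-(x)(W)=\bbE[\mathbf{1}_{\{\psi_-(x)\in W\}}\mid\calF_{\ge 0}](x)=0$ for $m$-a.e.\ $x$, and running this over a countable family of subvarieties that detects properness (for instance in the irreducible Chow varieties of $G/P$) gives (iii). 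To establish properness of $\mu_-$ itself one adapts the standard Furstenberg-type argument: assuming the contrary, select a proper subvariety of minimal dimension carrying the maximal positive $\mu_-$-mass among subvarieties of that dimension. By $\Gamma$-equivariance of $\phi_-$ such extremal subvarieties form a finite family permuted by $\rho(\Gamma)$; the kernel of this permutation action has finite index in $\rho(\Gamma)$ and hence remains Zariski dense in the connected group $G$, forcing each extremal subvariety to be $G$-invariant and contradicting $W\subsetneq G/P$.
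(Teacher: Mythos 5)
Your construction of $\psi_-,\psi_+,\psi_{\bowtie}$ and of $\nu_-$ is exactly the paper's, and your verification of (i) and (ii) is a clean and correct expansion of what the paper compresses into the single remark ``(i) follows from the definition, (ii) by Martingale Convergence.'' The tower-property computation
\[
F(T^{-1}x)\cdots F(T^{-n}x)_*\nu_-(T^{-n}x)=\bbE\bigl[\delta_{\psi_-(x)}\mid\calF_{\ge -n}\bigr]
\]
is the right way to organize the argument, and you correctly note that $\psi_-$ is measurable with respect to $\bigvee_n\calF_{\ge -n}$ because the embedding $x\mapsto(f_n(x))_n$ into $\wt X$ is, so the limit in L\'evy's theorem is $\delta_{\psi_-(x)}$ and not a coarser conditional expectation.

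Part (iii), however, has a genuine gap, and it is exactly where the paper flags that the hypothesis $B_-\perp B_+$ is used. Your strategy reduces properness of the conditional measures $\nu_-(x)$ to properness of the marginal $\mu_-=\psi_{-*}m$, but this implication is false without further input: disintegrating a proper measure against an arbitrary $\sigma$-algebra can produce degenerate (even Dirac) fibers. In the present setting this is not a pathology but the central point: $\psi_+$ is $\calF_{\ge0}$-measurable (the future tail is built from $f\circ T^k$, $k\ge0$), so if $\psi_-$ were correlated with $\psi_+$ in a strong enough way the conditional law $\nu_-(x)=\bbE[\delta_{\psi_-}\mid\calF_{\ge0}]$ could concentrate on a small subvariety or even a point. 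The weak independence $B_-\perp B_+$, which says that the joint law of $(\psi_-,\psi_+)$ is equivalent to the product of its marginals, is precisely what forbids this collapse; an argument for (iii) that never invokes it cannot be correct. Two secondary problems compound this. First, there is no countable family of proper subvarieties of $G/P$ that ``detects properness'': the Chow varieties of each degree are uncountable, and the a.e.\ exceptional set in ``$\nu_-(x)(W)=0$ for a.e.\ $x$'' depends on $W$, so you cannot pass from ``for each fixed $W$'' to ``for a.e.\ $x$, for all $W$'' by a countable intersection. Second, the Furstenberg extremal-subvariety argument you invoke for properness of $\mu_-$ itself uses $\mu$-stationarity of the boundary measure (so that $\mu_-(W)=\int\mu_-(\rho(g)^{-1}W)\,d\mu(g)$ forces the maximal-mass family to be $\rho(\Gamma)$-stable); here $\nu_{B_-}$ is the tail-quotient measure of $\wt m$ and is merely $\Gamma$-quasi-invariant, so $\rho(\Gamma)$ does not obviously permute the extremal family, and that argument does not transfer as stated.
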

\begin{proof}[Sketch of the proof]
	The map $\psi_-\in\Map_F(X,G/P)$ is defined by applying the correspondence from \ref{O:equiv-inv} to the pull-back
	of $\phi_-\in \Map_\Gamma(B_-,G/P)$ via the quotient $\wt{X}\ec\bbZ\to B_-$.
	Define $\nu_-$ to be the conditional expectation (average) of the Dirac measures $\delta_{\psi_-(x)}$
	\[
		\nu_-(x) =\ \bbE \left(\delta_{\psi_-(x)} \mid \calF_{\ge 0}\right).
	\]
	Property (i) then follows from this definition, and (ii) follows by applying Martingale Convergence Theorem.
	
	We shall not give here the proof of property (iii), but point out that it uses the $B_-\perp B_+$ assumption
	as a well as Zariski density of $\rho$.
	 % reminiscent of Furstenberg's proof of properness for stationary measures
	% 	random walks supported on strongly irreducible groups (see \cite{Furst-Poisson}).
	% 	One shows that if $\nu_-(x)$ fails to be a.e. proper, then there is an $F$-equivariant measurable choice $\{ W(x) \}$
	% 	of some proper subvarieties $W(x)\subset G/P$, that corresponds via \ref{O:equiv-inv}, to a $\Gamma$-map $\wt{W}(-)$ on
	% 	$\wt{X}\ec\bbZ$.
	% 	One then shows that  that is measurable with respect to both $B_-$ and $B_+$.
	% 	Since  $B_-\perp B_+$ it follows that $\wt{W}(-)$ is constant and $\rho(\Gamma)$-invariant, contradicting Zariski density of $\rho(\Gamma)<G$.
	% 	The last assertion is a consequence of the fact that $\phi_-=\pr_1\circ \phi$ and $\phi_+=\pr_2\circ \phi$.
\end{proof}

% The contraction properties above leads to the simplicity of the spectrum using the following facts.
The following well known lemma allows one to prove quantitative results (linear growth of ergodic sums)
from qualitative information (consistent growth of ergodic sums).
\begin{lemma}\label{L:Kesten}
	Let $(X,m,T)$ be an ergodic p.m.p. system, and $h\in L^1(X,m)$ such that $h(x)+h(Tx)+\dots+h(T^nx)\to+\infty$
	for $m$-a.e. $x\in X$. Then $\int h\,dm>0$.
\end{lemma}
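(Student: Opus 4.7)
The plan is to analyze the asymptotic average of $h$ along orbits and rule out the two ways $\int h\,dm$ could fail to be positive. Denote $S_n(x)=h(x)+h(Tx)+\cdots+h(T^{n-1}x)$ and $\bar h=\int h\,dm$. By the Birkhoff pointwise ergodic theorem (which applies since $(X,m,T)$ is ergodic p.m.p.\ and $h\in L^1$), $\frac{1}{n}S_n(x)\to \bar h$ for $m$-a.e.\ $x$. If $\bar h<0$, then $S_n(x)\to -\infty$ almost surely, contradicting the hypothesis that $S_n(x)\to +\infty$ almost surely. Hence $\bar h\ge 0$, and the whole content of the lemma is to exclude the borderline case $\bar h=0$.

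To rule out $\bar h=0$, I would invoke Atkinson's recurrence theorem: if $T$ is an ergodic p.m.p.\ transformation and $h\in L^1(X,m)$ with $\int h\,dm=0$, then $\liminf_{n\to\infty}|S_n(x)|=0$ for $m$-a.e.\ $x$. This is in immediate conflict with $S_n(x)\to+\infty$, giving the desired contradiction. Combined with the previous paragraph, we conclude $\bar h>0$.

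The main obstacle is Atkinson's theorem itself, which does not follow from Birkhoff alone since sublinear divergence $S_n\to+\infty$ with $S_n/n\to 0$ is a priori consistent. For a self-contained argument one passes to the skew product
\[
T_h:X\times\bbR\ \overto{}\ X\times\bbR,\qquad T_h(x,t)=(Tx,\,t+h(x)),
\]
which preserves the infinite measure $m\times\mathrm{Leb}$. The key claim is that $T_h$ is \emph{conservative} when $\bar h=0$: no Borel set of positive finite measure is wandering. Granting conservativity, apply Poincar\'e recurrence to the finite-measure set $B_\varepsilon=X_0\times[-\varepsilon,\varepsilon]$ for any $X_0\subset X$ with $m(X_0)>0$ and any $\varepsilon>0$: one obtains infinitely many $n\ge 1$ with $(m\times\mathrm{Leb})(B_\varepsilon\cap T_h^{-n}B_\varepsilon)>0$, which translates into $|S_n(x)|<2\varepsilon$ for a positive-measure set of $x\in X_0$. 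Together with ergodicity of $T$ and letting $\varepsilon\downarrow 0$, this yields $\liminf_n|S_n(x)|=0$ a.e., as required.

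Conservativity of $T_h$ when $\bar h=0$ is the technical heart of the argument and can be established via the Hopf decomposition: a hypothetical wandering set $W\subset X\times\bbR$ of positive finite measure would, after integrating $\sum_{n\ge 0}1_W\circ T_h^n$, have to be summable in $L^1(m\times\mathrm{Leb})$, yet the Birkhoff sums $S_n$ governing the vertical motion have average drift zero, so the orbit measures cannot escape to infinity fast enough to keep the iterates of $W$ disjoint with total finite measure. Making this precise is standard (cf.\ Atkinson's original argument or the skew-product treatment in Aaronson's infinite ergodic theory); once in hand, the lemma follows from the two paragraphs above.
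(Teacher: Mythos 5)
The paper states this lemma as ``well known'' and gives no proof, so there is nothing in-text to compare against; your argument is a correct and standard proof. Splitting into $\bar h<0$ (killed by Birkhoff) and $\bar h=0$ (killed by Atkinson's recurrence theorem) is exactly the right decomposition, and your Poincar\'e-recurrence argument on the skew product $T_h(x,t)=(Tx,t+h(x))$ with the strip $B_\varepsilon=X_0\times[-\varepsilon,\varepsilon]$ correctly translates conservativity into $\liminf_n|S_n(x)|\le 2\varepsilon$; taking $X_0=X$ and $\varepsilon\downarrow 0$ then gives $\liminf_n|S_n(x)|=0$ a.e., contradicting $S_n\to+\infty$. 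The only genuinely incomplete step is the conservativity of $T_h$ when $\bar h=0$, which you flag honestly: your heuristic about wandering sets is not yet a proof. The cleanest way to close that gap is the standard one (Atkinson/Schmidt): if a wandering set $W$ of positive finite measure existed, then $\sum_{n\ge 0}(m\times\mathrm{Leb})\bigl(B\cap T_h^{-n}W\bigr)\le(m\times\mathrm{Leb})(B)<\infty$ for every finite-measure $B$; one then uses $S_n/n\to 0$ to show the orbit of a positive-measure strip must revisit a fixed bounded vertical window along a positive-density set of times, forcing the series to diverge. With that supplied, the proof is complete.
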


Contraction of measures on $G/P$ can indicate growth.
\begin{lemma}
	Let $Q\subset \Prob(G/P)$ be a compact set of proper measures, $\{\nu_n\}$ a sequence in $Q$,
	and let $\{a_n\}$ be a sequence in the Cartan subalgebra $\frak{a}$ of $\frak{g}={\rm Lie}(G)$, so that
	\[
		\exp(a_n)_*\nu_n\ \overto{}\ \delta_{eP}.
	\]
	Then for any positive root, $\chi:\frak{a}\to\bbR$ one has $\chi(a_n)\to\infty$.
\end{lemma}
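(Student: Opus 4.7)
The plan is to argue by contradiction. Suppose some positive root $\chi_0$ satisfies $\chi_0(a_n)\not\to+\infty$; after passing to a subsequence, assume $\chi_0(a_n)\le C$ for all $n$, with some constant $C$. I will produce a closed set $E\subset G/P$ disjoint from a neighborhood of $eP$ on which $\exp(a_n)_*\nu_n$ carries uniformly positive mass, contradicting the weak-* convergence to $\delta_{eP}$ by the portmanteau theorem.

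The key preliminary is a \emph{uniform} properness statement for $Q$: for every proper algebraic subvariety $V\subsetneq G/P$ and every $\epsilon>0$ there is an open neighborhood $U\supset V$ with $\sup_{\nu\in Q}\nu(U)<\epsilon$. This follows from weak-* compactness of $Q$, lower semi-continuity of $\nu\mapsto\nu(U)$ on open sets, outer regularity, and the properness $\nu(V)=0$. The same argument yields uniform smallness for any decreasing family of open sets whose intersection is contained in a proper algebraic subvariety. Applying the first form to the complement of the big Bruhat cell $G/P\setminus N^-\!\cdot\!eP$ (where $N^-$ is the unipotent radical opposite to $P$), I obtain a compact set $K\subset N^-$ with $\nu_n(K\!\cdot\!eP)\ge 3/4$ for all $n$.

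Fix a nonzero linear functional $\ell$ on the root space $\frak g_{-\chi_0}$, extended by zero on the other negative root spaces, and set
\[
A_\delta\defq\{\exp(X)\cdot eP:X\in\frak n^-,\ |\ell(X)|<\delta\}.
\]
Since $\exp:\frak n^-\to N^-$ is a polynomial isomorphism of the unipotent $N^-$, the intersection $\bigcap_{\delta>0}A_\delta$ coincides with the Zariski hyperplane $\{\ell=0\}\!\cdot\!eP$ in the big cell, whose closure in $G/P$ is a proper algebraic subvariety (using also projectivity of $G/P$). The uniform properness therefore furnishes $\delta>0$ with $\sup_Q\nu(A_\delta)<1/4$, so that
\[
K_\delta\defq\{u\in K:|\ell(\log u)|\ge\delta\}
\]
satisfies $\nu_n(K_\delta\!\cdot\!eP)\ge 1/2$ for all $n$. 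The linear dynamics on $N^-$ is now transparent: combining $\exp(a)\exp(X)\exp(-a)=\exp(\Ad(\exp(a))X)$ with $\exp(a_n)\cdot eP=eP$ gives
\[
\exp(a_n)\cdot(u\cdot eP)=\exp\bigl(\Ad(\exp(a_n))\log u\bigr)\cdot eP\qquad(u\in N^-),
\]
and $\Ad(\exp(a_n))$ scales $\frak g_{-\chi_0}$ by $e^{-\chi_0(a_n)}\ge e^{-C}$; hence for every $u\in K_\delta$,
\[
\bigl|\ell\bigl(\log(\exp(a_n)u\exp(-a_n))\bigr)\bigr|\ =\ e^{-\chi_0(a_n)}\,|\ell(\log u)|\ \ge\ e^{-C}\delta.
\]
Thus the image $\exp(a_n)\cdot(K_\delta\!\cdot\!eP)$ is contained in the closed set $E\defq\{v\in N^-\!\cdot\!eP:|\ell(\log v)|\ge e^{-C}\delta\}$.

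Finally, continuity of $v\mapsto\ell(\log v)$ on the open set $N^-\!\cdot\!eP\subset G/P$ together with its vanishing at $eP$ produce an open neighborhood $W$ of $eP$ in $G/P$ with $W\cap E=\emptyset$. Then
\[
\exp(a_n)_*\nu_n(G/P\setminus W)\ \ge\ \nu_n(K_\delta\!\cdot\!eP)\ \ge\ \tfrac12,
\]
whereas weak-* convergence forces $\exp(a_n)_*\nu_n(W)\to 1$; this is the desired contradiction. The main obstacle is the uniform properness statement, especially in its version for decreasing open sets whose intersection is algebraic; once it is in hand, the rest is linear-algebraic dynamics on $\frak n^-$. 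A subsidiary subtlety is checking that the coordinate hyperplane $\{\ell\circ\log=0\}\!\cdot\!eP$ closes up to a \emph{proper} algebraic subvariety of $G/P$, which rests on polynomiality of $\exp$ on the unipotent $N^-$ and projectivity of $G/P$.
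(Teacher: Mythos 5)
The paper does not actually supply a proof of this lemma: it is stated in the ``Outline of the proof'' of Theorem~\ref{T:simple-Lya} as a routine fact (introduced by ``Contraction of measures on $G/P$ can indicate growth''), with no argument given. So there is no authorial proof to compare against, and the relevant question is simply whether your blind argument is sound.

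It is. The contradiction scheme is the right one, and all the steps check out: the uniform-properness statement does follow from weak-* compactness of $Q$ together with the closed-set half of the portmanteau theorem applied to a shrinking family of closed neighborhoods of the relevant subvariety (noting, as you should make explicit, that the limit of the closures is contained in the union of $\overline{\{\ell\circ\log=0\}\cdot eP}$ and the complement of the big cell $G/P\setminus N^-\cdot eP$, which is still a proper algebraic subvariety — this closes the small gap in the phrase ``whose intersection is contained in a proper algebraic subvariety''). The reduction to a compact $K\subset N^-$ carrying most of the mass, the choice of $\ell$ on the single root space $\frak g_{-\chi_0}$ extended by zero, the scaling identity $\ell(\Ad(\exp(a_n))X)=e^{-\chi_0(a_n)}\ell(X)$, and the final separation of $\exp(a_n)(K_\delta\cdot eP)$ from a neighborhood $W$ of $eP$ by a uniform lower bound $e^{-C}\delta$ are all correct. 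The argument is self-contained and does what the paper leaves to the reader.
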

Combining these two Lemmas, one may deduce simplicity of the spectrum in the following very special situation:
assume that
\begin{itemize}
	\item an integrable $F:X\to G$ takes values in the Cartan subgroup $A=\exp(\frak{a})$, so we can write $F(x)=\exp(a(x))$
	for an appropriate function $a:X\to \frak{a}$,
	\item
	some map $\nu:X\to \Prob(G/P)$, taking values in a compact $Q\subset\Prob(G/P)$ consisting of proper measures,
	satisfies weak-* convergence
	\[
		F(T^{-1}x)F(T^{-2}x)\cdots F(T^{-n}x)_*\nu_-(T^{-n}x)\ \overto{}\ \delta_{eP}.
	\]
\end{itemize}
Then one has $\Lambda=\int a\,dm$, and since $\chi(\Lambda)=\int\chi(a(x))\,dm(x)>0$ for every positive root $\chi$, the spectrum $\Lambda$ is simple.
% $\chi$, the function $h(x)=\chi(a(x))$ satisfies the conditions of Lemma~\ref{L:Kesten} to imply
% \[
% 	\int h\,dm=\chi
% \]
% Then for every positive root

Returning to the general case described in Proposition~\ref{P:harmonic}, one can use
the maps $\psi_{\bowtie}:X\to G/A'$ and $\psi_-:X\to G/P$ to find a measurable $c:X\to G$ so that
\[
	c(Tx) F(x) c(x)^{-1}\in A=\exp(\frak{a}),\qquad\textrm{and}\qquad c(x)\psi_-(x)=eP,
\]
while all $\nu(x)=c(x)\nu_-(x)$ still remain proper measures.
To arrive at the special situation described above we need to control integrability of the $A$-valued $c(Tx) F(x) c(x)^{-1}$
and to ensure uniform properness for $\nu(x)$. This can be achieved by passing to an \emph{induced system} (in the sense of Kakutani)
%$(X^*,m^*,T^*)$ and $F^*:X^*\to A$
as follows.
There exist compact sets $C\subset G$ and $Q\subset \Prob(G/P)$ where $Q$ consists of proper measures only,
so that the set
\[
	X^*=\{ x\in X \mid c(x)\in C,\ \nu(x)\in Q\}
\]
has $m(X^*)>0$. Let $m^*$ be the normalized restriction $m^*=m(X^*)^{-1}\cdot m|_{X^*}$, denote the first return time to $X^*$ by
$n(x)=\inf\{ n\ge 1 \mid T^n x\in X^*\}$, and define
\[
	T^*x=T^{n(x)}x,\qquad F^*(x)=c(T^{n(x)}x)F_{n(x)}(x)c(x)^{-1}.
\]
From the ergodic theorem $\int n(x)\,dm^*(x)=m(X^*)^{-1}$, and it follows that the Lyapunov spectra, $\Lambda$ of $F$ on $(X,m,T)$
and $\Lambda^*$ of $F^*$ on $(X^*,m^*,T^*)$, are positively proportional
\[
	\Lambda^*=\frac{1}{m(X^*)}\cdot \Lambda \in \frak{a}^+.
\]
But $F^*$ on $(X^*,m^*,T^*)$ satisfies the condition of the special case above, hence $\Lambda^*$ is simple, and therefore
so is the original $\Lambda\in\frak{a}^+$.

Finally the addendum about non-degeneracy of the Lyapunov spectrum when $\rho(\Gamma)$  is just assumed to be non-amenable,
follows from the simplicity criterion by considering the Levi decomposition of the Zariski closure of $\rho(\Gamma)$.
This completes the outline of the proof of Theorem~\ref{T:simple-Lya}.

\frenchspacing
%\begin{thebibliography}{7}

	\begin{bibdiv}
	\begin{biblist}

	\bib{AV}{article}{
		   author={Avila, Artur},
		   author={Viana, Marcelo},
		   title={Extremal Lyapunov exponents: an invariance principle and
		   applications},
		   journal={Invent. Math.},
		   volume={181},
		   date={2010},
		   number={1},
		   pages={115--189},
		   % issn={0020-9910},
		   % review={\MR{2651382 (2012h:37065)}},
		   % doi={10.1007/s00222-010-0243-1},
	}

	\bib{AV2}{article}{
	   author={Avila, Artur},
	   author={Viana, Marcelo},
	   title={Simplicity of Lyapunov spectra: a sufficient criterion},
	   journal={Port. Math. (N.S.)},
	   volume={64},
	   date={2007},
	   number={3},
	   pages={311--376},
	   % issn={0032-5155},
	   % review={\MR{2350698 (2011e:37010)}},
	   % doi={10.4171/PM/1789},
	}

	\bib{AV3}{article}{
	   author={Avila, Artur},
	   author={Viana, Marcelo},
	   title={Simplicity of Lyapunov spectra: proof of the Zorich-Kontsevich
	   conjecture},
	   journal={Acta Math.},
	   volume={198},
	   date={2007},
	   number={1},
	   pages={1--56},
	   % issn={0001-5962},
	   % review={\MR{2316268 (2008m:37010)}},
	   % doi={10.1007/s11511-007-0012-1},
	}
	\bib{BF-bnd}{article}{
		   author={Bader, Uri},
		   author={Furman, Alex},
		   title={Stronger boundary pairs and characteristic maps},
		   note={in preparation},
	}
	\bib{BF-Lya}{article}{
		   author={Bader, Uri},
		   author={Furman, Alex},
		   title={Boundary theory and simplicity of the Lyapunov spectrum},
		   note={in preparation},
	}

	\bib{BF-hyp}{article}{
		   author={Bader, Uri},
		   author={Furman, Alex},
		   title={Superrigidity via Weyl groups: hyperbolic-like targets},
		note={preprint},
	}
	\bib{BF-csr}{article}{
	      author={Bader, Uri},
	      author={Furman, Alex},
	       title={Algebraic representations of ergodic actions and super-rigidity},
			note={preprint},
			eprint={arXiv:1311.3696},
	}
	
	\bib{BF-note}{article}{
   		author={Bader, Uri},
   		author={Furman, Alex},
   		title={Boundaries, Weyl groups, and superrigidity},
   		journal={Electron. Res. Announc. Math. Sci.},
   		volume={19},
   		date={2012},
   		pages={41--48},
	}
	\bib{BFGW}{article}{
		   author={Bader, U.},
		   author={Furman, A.},
		   author={Gorodnik, A.},
		   author={Weiss, B.},
		   title={Rigidity of group actions on homogeneous spaces, III},
		   eprint={arxiv:1201.5367},
	}
	\bib{BFS-env}{article}{
		   author={Bader, U.},
		   author={Furman, A.},
		   author={Sauer, R.},
		   title={Lattice envelopes for some countable groups},
		   note={preprint},
	}

	\bib{BFS-circle}{article}{
		   author={Bader, U.},
		   author={Furman, A.},
		   author={Shaker, A.},
		   title={Superrigidity via Weyl groups: actions on the circle},
		   eprint={arXiv:math/0605276},
	}

	\bib{Burger-4Zimmer}{article}{
	   author={Burger, Marc},
	   title={An extension criterion for lattice actions on the circle},
	   conference={
	      title={Geometry, rigidity, and group actions},
	   },
	   book={
	      series={Chicago Lectures in Math.},
	      publisher={Univ. Chicago Press, Chicago, IL},
	   },
	   date={2011},
	   pages={3--31},
	   % review={\MR{2807827 (2012i:22035)}},
	}
	\bib{Burger-ACampo}{article}{
	   author={Burger, Marc},
	   author={A'Campo, Norbert},
	   title={R\'eseaux arithm\'etiques et commensurateur d'apr\`es G. A.
	   Margulis},
	   language={French},
	   journal={Invent. Math.},
	   volume={116},
	   date={1994},
	   number={1-3},
	   pages={1--25},
	   % issn={0020-9910},
	   % review={\MR{1253187 (96a:22019)}},
	   % doi={10.1007/BF01231555},
	}

	\bib{Burger+Monod}{article}{
	   author={Burger, M.},
	   author={Monod, N.},
	   title={Continuous bounded cohomology and applications to rigidity theory},
	   journal={Geom. Funct. Anal.},
	   volume={12},
	   date={2002},
	   number={2},
	   pages={219--280},
	   % issn={1016-443X},
	   % review={\MR{1911660 (2003d:53065a)}},
	   % doi={10.1007/s00039-002-8245-9},
	}

	\bib{Burger+Mozes}{article}{
	   author={Burger, M.},
	   author={Mozes, S.},
	   title={${\rm CAT}$(-$1$)-spaces, divergence groups and their
	   commensurators},
	   journal={J. Amer. Math. Soc.},
	   volume={9},
	   date={1996},
	   number={1},
	   pages={57--93},
	   % issn={0894-0347},
	   % review={\MR{1325797 (96c:20065)}},
	   % doi={10.1090/S0894-0347-96-00196-8},
	}

	\bib{Eskin+Matheus}{article}{
   		author={Eskin, A.},
   		author={Matheus, C.},
   		title={Semisimplicity of the Lyapunov spectrum for irreducible cocycles},
   		date={2013},
   		eprint={arXiv:1309.0160 [math.DS]},
	}

	\bib{Furst-Poisson}{article}{
   		author={Furstenberg, H.},
   		title={A Poisson formula for semi-simple Lie groups},
   		journal={Ann. of Math. (2)},
   		volume={77},
   		date={1963},
   		pages={335--386},
	}

	\bib{Furst-bourbaki}{article}{
	   author={Furstenberg, H.},
	   title={Rigidity and cocycles for ergodic actions of semisimple Lie groups
	   (after G. A. Margulis and R. Zimmer)},
	   conference={
	      title={Bourbaki Seminar, Vol. 1979/80},
	   },
	   book={
	      series={Lecture Notes in Math.},
	      volume={842},
	      publisher={Springer},
	      place={Berlin},
	   },
	   date={1981},
	   pages={273--292},
	}

	\bib{Furst-BD}{article}{
	   author={Furstenberg, H.},
	   title={A note on Borel's density theorem},
	   journal={Proc. Amer. Math. Soc.},
	   volume={55},
	   date={1976},
	   number={1},
	   pages={209--212},
	   % issn={0002-9939},
	   % review={\MR{0422497 (54 \#10484)}},
	}

	\bib{GKM}{article}{
	   author={Gelander, T.},
	   author={Karlsson, A.},
	   author={Margulis, G. A.},
	   title={Superrigidity, generalized harmonic maps and uniformly convex
	   spaces},
	   journal={Geom. Funct. Anal.},
	   volume={17},
	   date={2008},
	   number={5},
	   pages={1524--1550},
	   % issn={1016-443X},
	   % review={\MR{2377496 (2009a:53074)}},
	   % doi={10.1007/s00039-007-0639-2},
	}

	\bib{Ghys-lattices}{article}{
	   author={Ghys, {\'E}tienne},
	   title={Actions de r\'eseaux sur le cercle},
	   language={French},
	   journal={Invent. Math.},
	   volume={137},
	   date={1999},
	   number={1},
	   pages={199--231},
	   % issn={0020-9910},
	   % review={\MR{1703323 (2000j:22014)}},
	   % doi={10.1007/s002220050329},
	}
	
	\bib{Ghys-circle}{article}{
	   author={Ghys, {\'E}tienne},
	   title={Groups acting on the circle},
	   journal={Enseign. Math. (2)},
	   volume={47},
	   date={2001},
	   number={3-4},
	   pages={329--407},
	   % issn={0013-8584},
	   % review={\MR{1876932 (2003a:37032)}},
	}

	\bib{GW}{article}{
   		author={Glasner, E.},
   		author={Weiss, B.},
   		title={Weak mixing properties for nonsingular actions},
   		date={2013},
   		eprint={arXiv:1308.0159 [math.DS]},
	}

	\bib{GM}{article}{
	   		author={Gol{\cprime}dshe{\u\i}d, I. Ya.},
	   		author={Margulis, G. A.},
	   		title={Lyapunov exponents of a product of random matrices},
	   		language={Russian},
	   		journal={Uspekhi Mat. Nauk},
	   		volume={44},
	   		date={1989},
	   		number={5(269)},
	   		pages={13--60},
	   		translation={
	      		journal={Russian Math. Surveys},
	      		volume={44},
	      		date={1989},
	      		number={5},
	      		pages={11--71},
	      		% issn={0036-0279},
	   		},
	}

	\bib{GR2}{article}{
	   		author={Guivarc'h, Y.},
	   		author={Raugi, A.},
	   		title={Products of random matrices: convergence theorems},
	   		conference={
	      		title={Random matrices and their applications},
	      		address={Brunswick, Maine},
	      		date={1984},
	   			},
	   		book={
	      	series={Contemp. Math.},
	      	volume={50},
	      	publisher={Amer. Math. Soc.},
	      	place={Providence, RI},
	   		},
	   		date={1986},
	   		pages={31--54},
	}

	\bib{GR}{article}{
	   		author={Guivarc'h, Y.},
	   		author={Raugi, A.},
	   		title={Fronti\`ere de Furstenberg, propri\'et\'es de contraction et
	   		th\'eor\`emes de convergence},
	   		language={French},
	   		journal={Z. Wahrsch. Verw. Gebiete},
	   		volume={69},
	   		date={1985},
	   		number={2},
	   		pages={187--242},
	}

	% \bib{Gromov-invariants}{article}{
	%    author={Gromov, M.},
	%    title={Asymptotic invariants of infinite groups},
	%    conference={
	%       title={Geometric group theory, Vol.\ 2},
	%       address={Sussex},
	%       date={1991},
	%    },
	%    book={
	%       series={London Math. Soc. Lecture Note Ser.},
	%       volume={182},
	%       publisher={Cambridge Univ. Press},
	%       place={Cambridge},
	%    },
	%    date={1993},
	%    pages={1--295},
	%    %review={\MR{1253544 (95m:20041)}},
	% }

	\bib{Kaim-MET}{article}{
	   author={Kaimanovich, V. A.},
	   title={Lyapunov exponents, symmetric spaces and a multiplicative ergodic
	   theorem for semisimple Lie groups},
	   language={Russian, with English summary},
	   journal={Zap. Nauchn. Sem. Leningrad. Otdel. Mat. Inst. Steklov.
	   (LOMI)},
	   volume={164},
	   date={1987},
	   number={Differentsialnaya Geom. Gruppy Li i Mekh. IX},
	   pages={29--46, 196--197},
	   % issn={0373-2703},
	   translation={
	      journal={J. Soviet Math.},
	      volume={47},
	      date={1989},
	      number={2},
	      pages={2387--2398},
	      issn={0090-4104},
	   },
	   % review={\MR{947327 (89m:22006)}},
	   % doi={10.1007/BF01840421},
	}

	\bib{Kaim-DE}{article}{
	   author={Kaimanovich, V. A.},
	   title={Double ergodicity of the Poisson boundary and applications to
	   bounded cohomology},
	   journal={Geom. Funct. Anal.},
	   volume={13},
	   date={2003},
	   number={4},
	   pages={852--861},
	   % issn={1016-443X},
	   % review={\MR{2006560 (2004k:60128)}},
	   % doi={10.1007/s00039-003-0433-8},
	}
	
	\bib{KV}{article}{
	   author={Kaimanovich, V. A.},
	   author={Vershik, A. M.},
	   title={Random walks on discrete groups: boundary and entropy},
	   journal={Ann. Probab.},
	   volume={11},
	   date={1983},
	   number={3},
	   pages={457--490},
	   % issn={0091-1798},
	   % review={\MR{704539 (85d:60024)}},
	}

	\bib{KM}{article}{
	   author={Karlsson, Anders},
	   author={Margulis, G. A.},
	   title={A multiplicative ergodic theorem and nonpositively curved spaces},
	   journal={Comm. Math. Phys.},
	   volume={208},
	   date={1999},
	   number={1},
	   pages={107--123},
	   % issn={0010-3616},
	   % review={\MR{1729880 (2000m:37031)}},
	   % doi={10.1007/s002200050750},
	}

	\bib{Led}{article}{
	   author={Ledrappier, F.},
	   title={Positivity of the exponent for stationary sequences of matrices},
	   conference={
	      title={Lyapunov exponents},
	      address={Bremen},
	      date={1984},
	   },
	   book={
	      series={Lecture Notes in Math.},
	      volume={1186},
	      publisher={Springer},
	      place={Berlin},
	   },
	   date={1986},
	   pages={56--73},
	   % review={\MR{850070 (87m:60160)}},
	   % doi={10.1007/BFb0076833},
	}

	% \bib{Margulis-Mostow}{article}{
	%    author={Margulis, G. A.},
	%    title={Non-uniform lattices in semisimple algebraic groups},
	%    conference={
	%       title={Lie groups and their representations (Proc. Summer School on
	%       Group Representations of the Bolyai J\'anos Math. Soc., Budapest,
	%       1971)},
	%    },
	%    book={
	%       publisher={Halsted, New York},
	%    },
	%    date={1975},
	%    pages={371--553},
	%    %review={\MR{0422499 (54 \#10486)}},
	% }
	\bib{Margulis-ICM}{article}{
	   author={Margulis, G. A.},
	   title={Discrete groups of motions of manifolds of nonpositive curvature},
	   language={Russian},
	   conference={
	      title={Proceedings of the International Congress of Mathematicians
	      (Vancouver, B.C., 1974), Vol. 2},
	   },
	   book={
	      publisher={Canad. Math. Congress, Montreal, Que.},
	   },
	   date={1975},
	   pages={21--34},
	   % review={\MR{0492072 (58 \#11226)}},
	}

	\bib{Margulis-book}{book}{
	      author={Margulis, G. A.},
	       title={Discrete subgroups of semisimple {L}ie groups},
	      series={Ergebnisse der Mathematik und ihrer Grenzgebiete (3) [Results in
	  Mathematics and Related Areas (3)]},
	   publisher={Springer-Verlag},
	     address={Berlin},
	        date={1991},
	      volume={17},
	      %   ISBN={3-540-12179-X},
	      % review={\MR{MR1090825 (92h:22021)}},
	}

	\bib{Margulis-circle}{article}{
	   author={Margulis, G. A.},
	   title={Free subgroups of the homeomorphism group of the circle},
	   language={English, with English and French summaries},
	   journal={C. R. Acad. Sci. Paris S\'er. I Math.},
	   volume={331},
	   date={2000},
	   number={9},
	   pages={669--674},
	   % issn={0764-4442},
	   % review={\MR{1797749 (2002b:37034)}},
	   % doi={10.1016/S0764-4442(00)01694-3},
	}

	% \bib{Monod-book}{book}{
	%    author={Monod, N.},
	%    title={Continuous bounded cohomology of locally compact groups},
	%    series={Lecture Notes in Mathematics},
	%    volume={1758},
	%    publisher={Springer-Verlag},
	%    place={Berlin},
	%    date={2001},
	%    pages={x+214},
	%    % isbn={3-540-42054-1},
	%    % review={\MR{1840942 (2002h:46121)}},
	%    % doi={10.1007/b80626},
	% }

	% \bib{MonodShalom}{article}{
	%    author={Monod, N.},
	%    author={Shalom, Y.},
	%    title={Orbit equivalence rigidity and bounded cohomology},
	%    journal={Ann. of Math. (2)},
	%    volume={164},
	%    date={2006},
	%    number={3},
	%    pages={825--878},
	%    %issn={0003-486X},
	%    %review={\MR{2259246 (2007k:37007)}},
	% }
	
	\bib{Oseled}{article}{
	   author={Oseledec, V. I.},
	   title={A multiplicative ergodic theorem. Characteristic Ljapunov,
	   exponents of dynamical systems},
	   language={Russian},
	   journal={Trudy Moskov. Mat. Ob\v s\v c.},
	   volume={19},
	   date={1968},
	   pages={179--210},
	   % issn={0134-8663},
	   % review={\MR{0240280 (39 \#1629)}},
	}
	
	% \bib{Ryll-N}{article}{
	%    author={Ryll-Nardzewski, C.},
	%    title={On fixed points of semigroups of endomorphisms of linear spaces},
	%    conference={
	%       title={Proc. Fifth Berkeley Sympos. Math. Statist. and Probability
	%       (Berkeley, Calif., 1965/66), Vol. II: Contributions to Probability
	%       Theory, Part 1},
	%    },
	%    book={
	%       publisher={Univ. California Press},
	%       place={Berkeley, Calif.},
	%    },
	%    date={1967},
	%    pages={55--61},
	% }
	
	% \bib{Shalom}{article}{
	%    author={Shalom, Yehuda},
	%    title={Rigidity of commensurators and irreducible lattices},
	%    journal={Invent. Math.},
	%    volume={141},
	%    date={2000},
	%    number={1},
	%    pages={1--54},
	%    % issn={0020-9910},
	%    % review={\MR{1767270 (2001k:22022)}},
	%    % doi={10.1007/s002220000064},
	% }
	
	\bib{Zimmer-amen}{article}{
	   author={Zimmer, R. J.},
	   title={Amenable ergodic group actions and an application to Poisson
	   boundaries of random walks},
	   journal={J. Functional Analysis},
	   volume={27},
	   date={1978},
	   number={3},
	   pages={350--372},
	   % review={\MR{0473096 (57 \#12775)}},
	}

	\bib{zimmer-csr}{article}{
	   author={Zimmer, R. J.},
	   title={Strong rigidity for ergodic actions of semisimple Lie groups},
	   journal={Ann. of Math. (2)},
	   volume={112},
	   date={1980},
	   number={3},
	   pages={511--529},
	   %issn={0003-486X},
	   %review={\MR{595205 (82i:22011)}},
	}
	% \bib{zimmer-analogue}{article}{
	%    author={Zimmer, R. J.},
	%    title={An analogue of the Mostow-Margulis rigidity theorems for ergodic
	%    actions of semisimple Lie groups},
	%    journal={Bull. Amer. Math. Soc. (N.S.)},
	%    volume={2},
	%    date={1980},
	%    number={1},
	%    pages={168--170},
	%    %issn={0273-0979},
	%    %review={\MR{551755 (80m:22016)}},
	% }

	\bib{Zimmer-book}{book}{
	   author={Zimmer, R. J.},
	   title={Ergodic theory and semisimple groups},
	   series={Monographs in Mathematics},
	   volume={81},
	   publisher={Birkh\"auser Verlag},
	   place={Basel},
	   date={1984},
	   pages={x+209},
	   %isbn={3-7643-3184-4},
	   %review={\MR{776417 (86j:22014)}},
	}

	\end{biblist}
	\end{bibdiv}

% \bibitem{BaRh} Babu{\v{s}}ka, I.,  Rheinboldt,  W. C.,
% Error Estimates for Adaptive Finite Element Computations,
% \emph{SIAM J. Numer. Anal.}  \textbf{15} (1978), 736--754.
%
% \bibitem{FrQu}
% Freedman, M. H.,  Quinn, F., \emph{Topology of 4-manifolds}.
% Princeton Mathematical Series~39, Princeton University
% Press, Princeton, NJ, 1990.

%\end{thebibliography}

\end{document}